\newtheorem{thm}{Theorem}[section]
\newtheorem{cor}[thm]{Corollary}
\newtheorem{lem}[thm]{Lemma}
\newtheorem{prop}[thm]{Proposition}
\theoremstyle{definition}
\newtheorem{defn}[thm]{Definition}
\newtheorem{rem}[thm]{Remark}
\newcommand{\R}{\mathbb R}
\newcommand{\Z}{\mathbb Z}
\newcommand{\C}{\mathbb C}
\newif\ifpdf \pdftrue
\begin{document}

\title{Concentration theorem and relative fixed point formula of Lefschetz type in Arakelov
geometry}

\author{Shun Tang}

\date{}

\maketitle

\vspace{-10mm}

\hspace{5cm}\hrulefill\hspace{5.5cm} \vspace{5mm}

\textbf{Abstract.} In this paper we prove a concentration theorem
for arithmetic $K_0$-theory, this theorem can be viewed as an
analog of R. Thomason's result (cf. \cite{Th}) in the arithmetic
case. We will use this arithmetic concentration theorem to prove a
relative fixed point formula of Lefschetz type in the context of
Arakelov geometry. Such a formula was conjectured of a slightly
stronger form by K. K\"{o}hler and D. Roessler in \cite{KR2} and
they also gave a correct route of its proof there. Nevertheless
our new proof is much simpler since it looks more natural and it
doesn't involve too many complicated computations.

\textbf{2010 Mathematics Subject Classification:} 14C40, 14G40,
14L30, 58J20, 58J52

\tableofcontents

\section{Introduction}
In \cite{KR1}, K. K\"{o}hler and D. Roessler proved a Lefschetz
type fixed point formula for regular schemes endowed with the
action of a diagonalisable group scheme, this formula generalized
the regular case of P. Baum, W. Fulton and G. Quart's result in
\cite{BFQ} to Arakelov geometry. To make things more explicit, we
first recall the main theorem in \cite{KR1}, all notations will be
explained in the text.

Let $D$ be an arithmetic ring, and let $\mu_n$ be the
diagonalisable group scheme over $D$ associated to the cyclic
group $\Z/{n\Z}$. Denote the ring $K_0(\Z)[\Z/{n\Z}]\cong
\Z[T]/{(1-T^n)}$ by $R(\mu_n)$. Let $\rho$ be the prime ideal in
$R(\mu_n)$ which is the kernel of the canonical morphism
$\Z[T]/{(1-T^n)}\rightarrow \Z[T]/{(\Phi_n)}$ where $\Phi_n$
stands for the $n$-th cyclotomic polynomial. By construction the
elements $1-T^k$ for $k=1,\ldots,n-1$ are not contained in $\rho$.
To every $\mu_n$-equivariant arithmetic variety $X$, we can
associate an equivariant arithmetic $K_0$-group
$\widehat{K_0}(X,\mu_n)$ which contains certain set of smooth form
classes on $X_{\mu_n}(\C)$ as analytic datum where $X_{\mu_n}$
stands for the fixed point subscheme under the action of $\mu_n$.
Such an equivariant arithmetic $K_0$-group has a ring structure
and moreover it can be made to be an $R(\mu_n)$-algebra. Let $f$
be the structure morphism of $X$ over $D$, and let
$\overline{N}_{X/{X_{\mu_n}}}$ be the normal bundle with respect
to the regular immersion $X_{\mu_n}\hookrightarrow X$, endowed
with the quotient metric induced by a chosen K\"{a}hler metric of
$X(\C)$. Then the main theorem in \cite{KR1} reads: the element
$\lambda_{-1}(\overline{N}_{X/{X_{\mu_n}}}^\vee):=\Sigma_{j=0}^{{\rm
rk}(N_{X/{X_{\mu_n}}}^\vee)}(-1)^j\Lambda^j(\overline{N}_{X/{X_{\mu_n}}}^\vee)$
is invertible in $\widehat{K_0}(X_{\mu_n},\mu_n)_{\rho}$, the
localization of $\widehat{K_0}(X_{\mu_n},\mu_n)$ with respect to
the ideal $\rho$, and we have the following commutative diagram
\begin{displaymath}
\xymatrix{
\widehat{K_0}(X,\mu_n) \ar[rr]^-{\Lambda_R(f)^{-1}\cdot \tau} \ar[d]_{f_*} && \widehat{K_0}(X_{\mu_n},\mu_n)_{\rho} \ar[d]^{{f_{\mu_n}}_*} \\
 \widehat{K_0}(D,\mu_n) \ar[rr]^-{\iota} && \widehat{K_0}(D,\mu_n)_{\rho}}
\end{displaymath} where
$\Lambda_R(f):=\lambda_{-1}(\overline{N}_{X/{X_{\mu_n}}}^\vee)\cdot(1+R_g(N_{X/{X_{\mu_n}}}))$,
$\tau$ stands for the restriction map and $\iota$ is the natural
morphism from a ring or a module to its localization which sends
an element $e$ to $\frac{e}{1}$. Here $R_g(\cdot)$ is the
equivariant $R$-genus, the definition of the two push-forward
morphisms $f_*$ and ${f_{\mu_n}}_*$ involves an important analytic
datum which is called the equivariant analytic torsion.

In \cite{Ma1}, X. Ma defined the equivariant analytic torsion form
which is a higher analog of the equivariant analytic torsion and
he also proved the curvature and anomaly formulae for it. Once
these preparations are done, one can naturally conjecture a
Lefschetz fixed point formula which generalizes K. K\"{o}hler and
D. Roessler's results to the relative setting. Precisely speaking,
let $X$ and $Y$ be two $\mu_n$-equivariant arithmetic varieties,
suppose that we are given a flat $\mu_n$-equivariant morphism $f:
X\rightarrow Y$ which is smooth on the complex numbers, then we
can define reasonable push-forward morphisms $f_*$ and
${f_{\mu_n}}_*$ by using equivariant analytic torsion forms.
Correspondingly, by defining a suitable element $M(f)$ in the
localization $\widehat{K_0}(X_{\mu_n},\mu_n)_{\rho}$, we will get
the following commutative diagram
\begin{displaymath}
\xymatrix{
\widehat{K_0}(X,\mu_n) \ar[rr]^-{M(f)\cdot \tau} \ar[d]_{f_*} && \widehat{K_0}(X_{\mu_n},\mu_n)_{\rho} \ar[d]^{{f_{\mu_n}}_*} \\
 \widehat{K_0}(Y,\mu_n) \ar[rr]^-{\tau} &&
\widehat{K_0}(Y_{\mu_n},\mu_n)_{\rho}}
\end{displaymath}
where $\tau$ is again the restriction map.

There are two crucial actors in K. K\"{o}hler and D. Roessler's
proof of their main theorem. One is an elegant algebro-geometric
construction: the deformation to the normal cone, and the other
one is a technical result: $\widehat{K_0}(\cdot,\mu_n)$-theoretic
form of Bismut's immersion formula. The construction of the
deformation to the normal cone also works in the relative setting,
so if one can find a general Bismut's immersion formula in the
case of relative setting, the second commutative diagram described
above is no longer a conjecture but a theorem. J.-M. Bismut and X.
Ma fulfilled this work in \cite{BM}.

K. K\"{o}hler and D. Roessler's proof is a little hard to follow
since it involves too many complicated computations, we will
provide a much simpler proof in this paper but on one more
condition: the fibre product $f^{-1}(Y_{\mu_n})$ is also regular.
This limitation will not influence most applications in practice
(cf. Remark~\ref{505} (i)). Especially, the main results in
\cite{KR1} can be totally recovered. Our proof has nothing to do
with the construction of the deformation to the normal cone, it
relies on an arithmetic concentration theorem in Arakelov
geometry. This concentration theorem is an analog of R. Thomason's
result (cf. \cite{Th}) at $0$-degree level. Consider the closed
immersion $i: X_{\mu_n}\hookrightarrow X$, R. Thomason used the
Quillen's localization sequence for higher equivariant $K$-theory
to prove that the natural morphisms
\begin{displaymath}
i_*:\quad K_*(X_{\mu_n},\mu_n)_{\rho}\rightarrow
K_*(X,\mu_n)_{\rho}
\end{displaymath} are all isomorphisms and
their inverse morphisms are of the form
$\lambda_{-1}^{-1}(N_{X/{X_{\mu_n}}}^\vee)\cdot i^*$. In the
arithmetic case, we will first construct the morphism
\begin{displaymath}
i_*:\quad \widehat{K_0}(X_{\mu_n},\mu_n)_{\rho}\rightarrow
\widehat{K_0}(X,\mu_n)_{\rho}
\end{displaymath} and then prove
that it is also an isomorphism. The most important result we need
to prove the relative Lefschetz fixed point formula in Arakelov
geometry is that the inverse morphism of $i_*$ is given by
$\lambda_{-1}^{-1}(\overline{N}_{X/{X_{\mu_n}}}^\vee)\cdot i^*$.

The structure of this paper is as follows. In Section 2 and
Section 3, we recall the algebraic concentration theorem and the
equivariant arithmetic $K_0$-theory for the convenience of the
reader. In Section 4, we will discuss all analytic results coming
from differential geometry which are needed in this paper. In
Section 5 and Section 6, we describe and prove the arithmetic
concentration theorem and the relative Lefschetz fixed point
formula.

\textbf{Acknowledgements.} The author wishes to thank his thesis
advisor Damian Roessler for his constant encouragement and for
many fruitful discussions between them. Especially, Damian
Roessler is the person who made the author realize that it is
possible to prove the relative Lefschetz fixed point formula via
proving an arithmetic concentration theorem. The author also
wishes to thank Xiaonan Ma for his kindly explanation of various
purely analytic problems as well as his useful comments concerning
a crucial lemma in this paper. Finally, thanks to the referee,
whose very detailed comments help to improve the paper a great
deal.

\section{Algebraic concentration theorem}
Let $D$ be a Noetherian integral ring. In this section we fix
$S:={\rm Spec}(D)$ as the base scheme. Let $n$ be a positive
integer, we shall denote by $\mu_n$ the diagonalisable group
scheme over $S$ associated to the cyclic group $\Z/{n\Z}$. By a
$\mu_n$-equivariant scheme we understand a separable and of finite
type scheme over $S$ which admits a $\mu_n$-action. A
$\mu_n$-action on a scheme $X$ is a morphism $m_X: \mu_n\times
X\to X$ which statisfies some compatibility properties. Denote by
$p_X$ the projection from $\mu_n\times X$ to $X$. For a coherent
$\mathcal{O}_X$-module $E$ on $X$, a $\mu_n$-action on $E$ we mean
an isomorphism of coherent sheaves $m_E: p_X^*E\to m_X^*E$ which
satisfies certain associativity properties. We refer to \cite{Ko}
and \cite{KR1}, Section 2 for the group scheme action theory we
are talking about here.

Let $X$ be a $\mu_n$-equivariant scheme, we consider the category
of coherent $\mathcal{O}_X$-modules endowed with an action of
$\mu_n$ which are compatible with the $\mu_n$-structure of $X$.
According to Quillen, to this category we may associate a graded
abelian group $G_*(X,\mu_n)$ which is called the higher algebraic
equivariant $G$-group. If one replaces the $\mu_n$-equivariant
coherent $\mathcal{O}_X$-modules by the $\mu_n$-equivariant vector
bundles of finite rank, one gets the higher algebraic equivariant
$K$-group $K_*(X,\mu_n)$. It is well known that the tensor product
of $\mu_n$-equivariant vector bundles induces a graded ring
structure on $K_*(X,\mu_n)$ and a graded $K_*(X,\mu_n)$-module
structure on $G_*(X,\mu_n)$. Notice that if $X$ is regular, then
the natural morphism from $K_*(X,\mu_n)$ to $G_*(X,\mu_n)$ is an
isomorphism.

Denote by $X_{\mu_n}$ the fixed point subscheme of $X$ under the
action of $\mu_n$, then the closed immersion $i:
X_{\mu_n}\hookrightarrow X$ induces two group homomorphisms $i_*:
G_*(X_{\mu_n},\mu_n)\to G_*(X,\mu_n)$ and $i_*:
K_*(X_{\mu_n},\mu_n)\to K_*(X,\mu_n)$ which satisfy the projection
formula. According to \cite{SGA3}, I 4.4, $\mu_n$ is the pull-back
of a unique diagonalisable group scheme over $\Z$ associated to
the same group, this group scheme will be still denoted by
$\mu_n$. Write $R(\mu_n)$ for the group $K_0(\Z,\mu_n)$ which is
isomorphic to $\Z[T]/{(1-T^n)}$. Let $\rho$ be the prime ideal of
$R(\mu_n)$ which is defined to be the kernel of the following
canonical morphism
\begin{displaymath}
\Z[T]/{(1-T^n)}\to \Z[T]/{(\Phi_n)}
\end{displaymath}
where $\Phi_n$ stands for the $n$-th cyclotomic polynomial. The
prime ideal $\rho$ is chosen to satisfy the condition that the
localization $R(\mu_n)_{\rho}$ is a $R(\mu_n)$-algebra in which
the elements $1-T^k$ from $k=1$ to $n-1$ are all invertible. This
condition plays a crucial role in the proof of the concentration
theorem. If the $\mu_n$-equivariant scheme $X$ is regular, then
$X_{\mu_n}$ is also regular. We shall write
$\lambda_{-1}(N_{X/{X_{\mu_n}}}^\vee)$ for the alternating sum
$\sum(-1)^j\wedge^jN_{X/{X_{\mu_n}}}^\vee$ where
$N_{X/{X_{\mu_n}}}$ stands for the normal bundle associated to the
regular immersion $i$. Then the algebraic concentration theorem in
\cite{Th} can be described as the following.
\begin{thm}(Thomason)\label{concentration}
Let notations and assumptions be as above.
\begin{itemize}
\item The $R(\mu_n)_{\rho}$-module morphism $i_*:
G_*(X_{\mu_n},\mu_n)_{\rho}\to G_*(X,\mu_n)_{\rho}$ is actually an
isomorphism. \item If $X$ is regular, then
$\lambda_{-1}(N_{X/{X_{\mu_n}}}^\vee)$ is invertible in
$G_*(X_{\mu_n},\mu_n)_{\rho}$ and the inverse map of $i_*$ is
given by $\lambda_{-1}^{-1}(N_{X/{X_{\mu_n}}}^\vee)\cdot i^*$.
\end{itemize}
\end{thm}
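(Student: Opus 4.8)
The plan is to follow Thomason's argument from \cite{Th}: the first assertion is reduced, via Quillen's localisation sequence, to the vanishing of localised equivariant $G$-theory for schemes without fixed points, which is proved by Noetherian induction along the stratification by stabiliser type; the second assertion is then extracted from the self-intersection formula for the regular immersion $i$ together with the invertibility of the $K$-theoretic Euler class $\lambda_{-1}(N_{X/X_{\mu_n}}^\vee)$.

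For the first assertion, put $U:=X\setminus X_{\mu_n}$, an open $\mu_n$-invariant subscheme with $U_{\mu_n}=\emptyset$. The localisation sequence in equivariant $G$-theory attached to the closed immersion $X_{\mu_n}\hookrightarrow X$ with open complement $U$ reads
\[
\cdots\longrightarrow G_i(X_{\mu_n},\mu_n)\xrightarrow{\ i_*\ }G_i(X,\mu_n)\longrightarrow G_i(U,\mu_n)\longrightarrow G_{i-1}(X_{\mu_n},\mu_n)\longrightarrow\cdots,
\]
and since localisation at $\rho$ is exact it is enough to prove that $G_*(V,\mu_n)_\rho=0$ for every $\mu_n$-equivariant scheme $V$ with $V_{\mu_n}=\emptyset$. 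I would prove this by Noetherian induction on $V$. After passing to an irreducible component and then to its reduced structure one may assume $V$ integral; its generic stabiliser is a closed subgroup scheme $\mu_m\subseteq\mu_n$, necessarily proper (were it all of $\mu_n$, then $\mu_n$ would act trivially on a dense invariant open, forcing $V_{\mu_n}\neq\emptyset$), so $m\mid n$ and $1\le m\le n-1$. Shrinking $V$ to a dense invariant open subscheme, one may assume the stabiliser is everywhere equal to $\mu_m$, acting trivially, while $\mu_n/\mu_m\cong\mu_{n/m}$ acts freely. Decomposing $\mu_n$-equivariant coherent sheaves into $\mu_m$-isotypic components and descending along the free $\mu_{n/m}$-torsor $V\to V/\mu_{n/m}$, one finds that the one-dimensional representation $T^m$, being trivial on $\mu_m$, acts on $G_*(V,\mu_n)$ through tensoring by a line bundle pulled back from $V/\mu_{n/m}$; hence $1-T^m$ acts nilpotently. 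As $1\le m\le n-1$, the element $1-T^m$ is a unit in $R(\mu_n)_\rho$ by the defining property of $\rho$ recalled above, so $G_*(V,\mu_n)_\rho=0$. Feeding this back into the localisation sequence for the complement of the chosen open stratum closes the induction; applying the conclusion to $V=U$ shows that $i_*$ is an isomorphism after localising at $\rho$.

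For the second assertion, assume $X$, hence $X_{\mu_n}$, regular, so that $K_*(\,\cdot\,,\mu_n)\cong G_*(\,\cdot\,,\mu_n)$ for both; write $N=N_{X/X_{\mu_n}}$. I would first check that $\lambda_{-1}(N^\vee)$ is invertible in $K_0(X_{\mu_n},\mu_n)_\rho$. Since $\mu_n$ acts trivially on $X_{\mu_n}$ one has $K_0(X_{\mu_n},\mu_n)\cong K_0(X_{\mu_n})\otimes_\Z R(\mu_n)$, and $N=\bigoplus_{k=1}^{n-1}N_k$, where $N_k$ is the eigensubbundle on which $\mu_n$ acts through the character $T^k$; there is no summand $N_0$ because the fixed locus of $\mu_n$ on the total space of $N$ is precisely the zero section. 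Applying the splitting principle to each $N_k$ (which splits only the underlying non-equivariant bundle and leaves the character untouched), $\lambda_{-1}(N^\vee)$ becomes a product of factors $1-T^{-k}[L^\vee]=(1-T^{-k})-T^{-k}([L^\vee]-1)$ with $1\le k\le n-1$ and $L$ an ordinary line bundle on $X_{\mu_n}$; here $1-T^{-k}=1-T^{n-k}$ is a unit in $R(\mu_n)_\rho$ and $[L^\vee]-1$ is nilpotent in $K_0(X_{\mu_n})$, so each factor, hence $\lambda_{-1}(N^\vee)$, is a unit. Next, the self-intersection formula for the regular immersion $i$ gives $i^*i_*(y)=\lambda_{-1}(N^\vee)\cdot y$ on $K_*(X_{\mu_n},\mu_n)$, a fortiori on $G_*(X_{\mu_n},\mu_n)_\rho$. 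Therefore the morphism $\psi:=\lambda_{-1}^{-1}(N^\vee)\cdot i^*$ satisfies $\psi\circ i_*=\mathrm{id}$; since $i_*$ is an isomorphism by the first assertion, $\psi$ must be its inverse, which is the formula claimed.

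The step I expect to be the real obstacle is the dévissage in the first assertion: making precise the stratification of a diagonalisable group-scheme action by stabiliser type over a general Noetherian base, and carefully tracking the $R(\mu_n)$-module structure through the isotypic decomposition and the descent along $V\to V/\mu_{n/m}$, so as to pin down the nilpotent element $1-T^m$. Everything else — exactness of the localisation sequence, the self-intersection formula, and the purely arithmetic fact that $1-T^k\notin\rho$ for $1\le k\le n-1$ — is standard or already recorded in the excerpt.
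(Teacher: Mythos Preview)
Your proposal is correct and follows essentially the same route as the paper's account of Thomason's proof: reduce the first assertion via Quillen's localisation sequence to the vanishing $G_*(U,\mu_n)_\rho=0$ on fixed-point-free loci (established by a stabiliser-type d\'evissage exploiting that $1-T^k$ is a unit in $R(\mu_n)_\rho$ for $1\le k\le n-1$), then prove invertibility of $\lambda_{-1}(N^\vee)$ by the splitting principle and the same units, and finally identify the inverse via $i^*i_*=\lambda_{-1}(N^\vee)\cdot(-)$. The only cosmetic difference is that the paper phrases the last step as a ``projection formula'' computation whereas you invoke the self-intersection formula; these are the same thing here, since $i^*i_*(y)=\lambda_{-1}(N^\vee)\cdot y$ is precisely what the projection formula applied to the Koszul resolution of $i_*\mathcal O_{X_{\mu_n}}$ yields.
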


The proof of Thomason's algebraic concentration theorem can be
split into three steps. The first step is to show that
$G_*(U,\mu_n)_{\rho}\cong 0$ if $U$ has no fixed point, then the
claim that $i_*: G_*(X_{\mu_n},\mu_n)_{\rho}\to
G_*(X,\mu_n)_{\rho}$ is an isomorphism follows from Quillen's
localization sequence for higher equivariant $K$-theory, see
\cite{Th}, Th\'{e}or\`{e}me 2.1. The second step is to show that
$\lambda_{-1}(N_{X/{X_{\mu_n}}}^\vee)$ is invertible in
$G_*(X_{\mu_n},\mu_n)_{\rho}$ if $X$ is regular (cf. \cite{Th},
Lemme 3.2). The last step is a direct computation using the
projection formula for equivariant $K$-theory to show that the
inverse map of $i_*$ is exactly
$\lambda_{-1}^{-1}(N_{X/{X_{\mu_n}}}^\vee)\cdot i^*$ (cf.
\cite{Th}, Lemme 3.3). The condition that the localization
$R(\mu_n)_{\rho}$ is a $R(\mu_n)$-algebra in which the elements
$1-T^k$ from $k=1$ to $n-1$ are all invertible was used in the
first and the second step.

\section{Equivariant arithmetic $K_0$-theory}
By an arithmetic ring $D$ we understand a regular, excellent,
Noetherian integral ring, together with a finite set $\mathcal{S}$
of embeddings $D\hookrightarrow \C$, which is invariant under a
conjugate-linear involution $F_\infty$ (cf. \cite{GS}, Def.
3.1.1). A $\mu_n$-equivariant arithmetic variety $X$ over $D$ is a
regular Noetherian scheme $X$ endowed with a $\mu_n$-projective
action over $D$, namely there exists an equivariant closed
immersion from $X$ to some $\mu_n$-equivariant projective space
$\mathbb{P}_D^n$ (cf. \cite{KR1}, Definition 2.3). Let $X$ be a
$\mu_n$-equivariant arithmetic variety, then $X(\C)$, the set of
complex points of the variety $\coprod_{\sigma\in
\mathcal{S}}X\times_D\C$, is a compact complex manifold. This
manifold admits an action of the group of complex $n$-th roots of
unity and an anti-holomorphic involution induced by $F_\infty$
which is still denoted by $F_\infty$. It was shown in \cite{Th},
Prop. 3.1 that the fixed point subscheme $X_{\mu_n}$ is also
regular. Fix a primitive $n$-th root of unity $\zeta_n$ and denote
its corresponding holomorphic automorphism on $X(\C)$ by $g$, by
GAGA principle we have a natural isomorphism $X_{\mu_n}(\C)\cong
X(\C)_g$. Moreover, denote by $A^{p,p}(X(\C)_g)$ the set of smooth
forms $\omega$ of type $(p,p)$ on $X(\C)_g$ which satisfy
$F_\infty^*\omega=(-1)^p\omega$, we shall write
$\widetilde{A}(X_{\mu_n})$ for the set of form classes
\begin{displaymath}
\widetilde{A}(X(\C)_g):=\bigoplus_{p\geq 0}(A^{p,p}(X(\C)_g)/({\rm
Im}\partial+{\rm Im}\overline{\partial})).
\end{displaymath}
Similarly, denote by $D^{p,p}(X(\C)_g)$ the set of currents $T$ of
type $(p,p)$ on $X(\C)_g$ which satisfy $F_\infty^*T=(-1)^pT$, we
shall write $\widetilde{\mathcal{U}}(X_{\mu_n})$ for the set of
current classes
\begin{displaymath}
\widetilde{\mathcal{U}}(X(\C)_g):=\bigoplus_{p\geq
0}(D^{p,p}(X(\C)_g)/({\rm Im}\partial+{\rm
Im}\overline{\partial})).
\end{displaymath}

\begin{defn}\label{201}
An equivariant hermitian vector bundle $\overline{E}$ on $X$ is a
hermitian vector bundle $\overline{E}$ on $X$, endowed with a
$\mu_n$-action which lifts the action of $\mu_n$ on $X$ such that
the hermitian metric on $E_\C$ is invariant under $F_\infty$ and
$g$.
\end{defn}

\begin{rem}\label{202}
Let $\overline{E}$ be an equivariant hermitian vector bundle on
$X$, the restriction of $\overline{E}$ to the fixed point
subscheme $X_{\mu_n}$ has a natural $\Z/{n\Z}$-grading structure
$\overline{E}\mid_{X_{\mu_n}}\cong \oplus_{k\in
\Z/{n\Z}}\overline{E}_k$. $E_k$ is the subbundle of
$E\mid_{X_{\mu_n}}$ such that for open affine subscheme $V$ of
$X_{\mu_n}$ the action of $\mu_n(V)$ on $E_k(V)$ is given by
$u\bullet e_k=u^k\cdot e_k$ (cf. \cite{SGA3}, I, Prop. 4.7.3).  We
shall often write $\overline{E}_{\mu_n}$ for $\overline{E}_0$.
\end{rem}

Following the same notations and definitions as in \cite{KR1},
Section 3, we write ${\rm ch}_g(\overline{E})$ for the equivariant
Chern character form
\begin{displaymath}
{\rm ch}_g((E_\C,h))=\sum_{k\in \Z/{n\Z}}\zeta_n^k{\rm
ch}({E_k}_\C,h_{E_k})
\end{displaymath} associated to the
hermitian holomorphic vector bundle $(E_\C,h)$ on $X(\C)$.
Moreover, we have the equivariant Todd form
\begin{displaymath}
{\rm Td}_g(\overline{E}):=\frac{c_{{\rm
rk}{E_{\mu_n}}_\C}({\overline{E}_{\mu_n}}_\C)}{{\rm
ch}_g(\sum_{j=0}^{{\rm rk}E}(-1)^j\wedge^j\overline{E}^\vee)}.
\end{displaymath}
Furthermore, let $\overline{\varepsilon}: 0\rightarrow
\overline{E}'\rightarrow \overline{E}\rightarrow
\overline{E}''\rightarrow 0$ be an exact sequence of equivariant
hermitian vector bundles on $X$, we can associate to it an
equivariant Bott-Chern secondary characteristic class
$\widetilde{{\rm ch}}_g(\overline{\varepsilon})\in
\widetilde{A}(X_{\mu_n})$ which satisfies the differential
equation
\begin{displaymath}
{\rm dd}^c\widetilde{{\rm ch}}_g(\overline{\varepsilon})={\rm
ch}_g(\overline{E}')-{\rm ch}_g(\overline{E})+{\rm
ch}_g(\overline{E}'')
\end{displaymath} where ${\rm dd}^c$ is the
differential operator $\frac{\overline{\partial}\partial}{2\pi
i}$. Similarly, we have the equivariant Todd secondary
characteristic class $\widetilde{{\rm
Td}}_g(\overline{\varepsilon})\in \widetilde{A}(X_{\mu_n})$ which
satisfies the differential equation
\begin{displaymath}
{\rm dd}^c\widetilde{{\rm Td}}_g(\overline{\varepsilon})={\rm
Td}_g(\overline{E}'){\rm Td}_g(\overline{E}'')-{\rm
Td}_g(\overline{E}).
\end{displaymath}
Let $E$ be an equivariant vector bundle with two different
hermitian metrics $h_1$ and $h_2$, we shall write $\widetilde{{\rm
ch}}_g(E,h_1,h_2)$ (resp. $\widetilde{{\rm Td}}_g(E,h_1,h_2)$) for
the equivariant Bott-Chern (resp. Todd) secondary characteristic
class associated to the exact sequence
\begin{displaymath}
0\to (E,h_1)\to (E,h_2)\to 0\to 0
\end{displaymath} where the map
from $(E,h_1)$ to $(E,h_2)$ is the identity map.

\begin{defn}\label{203}
Let $X$ be a $\mu_n$-equivariant arithmetic variety, we define the
equivariant arithmetic Grothendieck group $\widehat{K_0}(X,\mu_n)$
with respect to $X$ as the free abelian group generated by the
elements of $\widetilde{A}(X_{\mu_n})$ and by the equivariant
isometry classes of equivariant hermitian vector bundles on $X$,
together with the relations

(i). for every exact sequence $\overline{\varepsilon}$ as above,
$\widetilde{{\rm
ch}}_g(\overline{\varepsilon})=\overline{E}'-\overline{E}+\overline{E}''$;

(ii). if $\alpha\in \widetilde{A}(X_{\mu_n})$ is the sum of two
elements $\alpha'$ and $\alpha''$ in $\widetilde{A}(X_{\mu_n})$,
then the equality $\alpha=\alpha'+\alpha''$ holds in
$\widehat{K_0}(X,\mu_n)$.
\end{defn}

\begin{rem}\label{204}
The definition of the arithmetic $K_0$-group implies that there is
an exact sequence
\begin{displaymath}
\xymatrix{
 \widetilde{A}(X_{\mu_n})
\ar[r]^-{a} & \widehat{K_0}(X,\mu_n) \ar[r]^-{\pi} & K_0(X,\mu_n)
\ar[r] & 0}
\end{displaymath} where $a$ is the
natural map which sends $\alpha\in \widetilde{A}(X_{\mu_n})$ to
the class of $\alpha$ in $\widehat{K_0}(X,\mu_n)$ and $\pi$ is the
forgetful map.
\end{rem}

We now describe the ring structure of $\widehat{K_0}(X,\mu_n)$. We
consider the generators of the abelian group
$\widehat{K_0}(X,\mu_n)$, for two equivariant hermitian vector
bundles $\overline{E}$, $\overline{E}'$ on $X$ and two elements
$\alpha$, $\alpha'$ in $\widetilde{A}(X_{\mu_n})$, we define the
rules of the product $\cdot$ as $\overline{E}\cdot
\overline{E}':=\overline{E}\otimes \overline{E}'$,
$\overline{E}\cdot \alpha=\alpha\cdot \overline{E}:={\rm
ch}_g(\overline{E})\wedge\alpha$ and $\alpha\cdot\alpha':={\rm
dd}^c\alpha\wedge\alpha'$. Note that $\alpha$ and $\alpha'$ are
both smooth, so $\alpha\cdot\alpha'$ is well-defined and it is
commutative in $\widetilde{A}(X_{\mu_n})$. It is easy to verify
that our definition is compatible with the two generating
relations in Definition~\ref{203}, we leave the verification to
the reader.

Furthermore, recall that $R(\mu_n)=\Z[T]/(1-T^n)$. Let
$\overline{I}$ be the $\mu_n$-equivariant hermitian projective
$D$-module whose term of degree $1$ is $D$ endowed with the
trivial metric and whose other terms are $0$, namely $I$ is the
structure sheaf $\mathcal{O}_D$ of ${\rm Spec}(D)$ on which the
$\mu_n$-action is given by $u\bullet d=u\cdot d$ where $u\in
\mu_n(V), d\in \mathcal{O}_D(V)$ and $V$ is an open affine
subscheme of ${\rm Spec}(D)$. Then we may make
$\widehat{K_0}(D,\mu_n)$ an $R(\mu_n)$-algebra under the ring
morphism which sends $T$ to $\overline{I}$. By doing pull-backs,
we may endow every arithmetic Grothendieck group we defined before
with an $R(\mu_n)$-module structure.

Since the classical arguments of locally free resolutions may not
be compatible with the equivariant setting, we summarize some
crucial facts we need as follows.

(i). Every equivariant coherent sheaf on an equivariant arithmetic
variety is an equivariant quotient of an equivariant locally free
coherent sheaf.

(ii). Every equivariant coherent sheaf on an equivariant
arithmetic variety admits a finite equivariant locally free
resolution.

(iii). An exact sequence of equivariant coherent sheaves on an
equivariant arithmetic variety admits an exact sequence of
equivariant locally free resolutions.

(iv). Any two equivariant locally free resolutions of an
equivariant coherent sheaf on an equivariant arithmetic variety
can be dominated by a third one.

All these statements can be found in the proof of \cite{KR1},
Prop. 4.2. For (i) and (ii), one can also see \cite{Ko}, Remark
3.5.

To end this section, we recall the following important lemma which
will be used frequently in this paper.
\begin{lem}\label{205}(\cite{KR1}, Lemma 4.5)
Let $X$ be a $\mu_n$-equivariant arithmetic variety and let
$\overline{E}$ be an equivariant hermitian vector bundle on
$X_{\mu_n}$ such that $\overline{E}_{\mu_n}=0$. Then the element
$\lambda_{-1}(\overline{E})$ is invertible in
$\widehat{K_0}(X_{\mu_n},\mu_n)_{\rho}$.
\end{lem}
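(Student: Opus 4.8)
The plan is to reduce the invertibility of $\lambda_{-1}(\overline{E})$ in $\widehat{K_0}(X_{\mu_n},\mu_n)_\rho$ to the corresponding algebraic statement, which is a special case of Thomason's theorem (Theorem~\ref{concentration}) combined with Lemma~\ref{205} in its purely algebraic incarnation. The key observation is the exact sequence from Remark~\ref{204},
\begin{displaymath}
\widetilde{A}(X_{\mu_n}) \xrightarrow{\ a\ } \widehat{K_0}(X_{\mu_n},\mu_n) \xrightarrow{\ \pi\ } K_0(X_{\mu_n},\mu_n) \to 0,
\end{displaymath}
which remains exact after localizing at $\rho$ (localization is exact). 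Since $\pi$ is a ring homomorphism and $\pi(\lambda_{-1}(\overline{E})) = \lambda_{-1}(E)$, the algebraic fact that $\lambda_{-1}(E)$ is invertible in $K_0(X_{\mu_n},\mu_n)_\rho$ (this is exactly \cite{Th}, Lemme 3.2, applied to the bundle $E$ with $E_{\mu_n}=0$) produces an element $y \in \widehat{K_0}(X_{\mu_n},\mu_n)_\rho$ with $\pi(\lambda_{-1}(\overline{E})\cdot y) = 1$, hence $\lambda_{-1}(\overline{E})\cdot y = 1 + a(\alpha)$ for some $\alpha \in \widetilde{A}(X_{\mu_n})_\rho$.

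First I would therefore establish the algebraic invertibility: for an equivariant vector bundle $E$ on the regular scheme $X_{\mu_n}$ with $E_{\mu_n}=0$, the class $\lambda_{-1}(E) = \sum_j (-1)^j \wedge^j E$ becomes a unit in $K_0(X_{\mu_n},\mu_n)_\rho$. This follows from the splitting principle: locally on the fixed point scheme $E$ decomposes along characters $E \cong \oplus_{k\neq 0} E_k$, so after passing to a flag bundle one reduces to line bundles $L$ on which $\mu_n$ acts through a nontrivial character $T^k$, $1\le k\le n-1$; then $\lambda_{-1}(L) = 1 - T^k\cdot[L_0]$, and since $1-T^k$ is a unit in $R(\mu_n)_\rho$ by the defining property of $\rho$, a standard argument with the nilpotence of $[L_0]-1$ (or a geometric series) shows $1-T^k[L_0]$ is invertible. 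Next I would transport this: pick $y$ as above, so $\lambda_{-1}(\overline{E})\cdot y = 1 - a(\alpha)$ say (adjusting signs), and it remains to invert $1 - a(\alpha)$.

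The main obstacle — and the heart of the argument — is handling the analytic part $a(\alpha)$. The key structural fact is that the image of $a$ is a square-zero ideal in a strong sense: by the product rules in $\widehat{K_0}$, for $\alpha,\alpha' \in \widetilde{A}(X_{\mu_n})$ one has $\alpha\cdot\alpha' = \mathrm{dd}^c\alpha \wedge \alpha'$, and more importantly the composite behaviour shows $a(\alpha)^2 = a(\mathrm{dd}^c\alpha\wedge\alpha)$; iterating, $a(\alpha)^{N}$ lands in the image of forms of arbitrarily high degree, which vanish once $N$ exceeds the complex dimension of $X_{\mu_n}(\C)$. Hence $a(\alpha)$ is nilpotent in $\widehat{K_0}(X_{\mu_n},\mu_n)_\rho$, so $1 - a(\alpha)$ is invertible with inverse $\sum_{m\ge 0} a(\alpha)^m$ (a finite sum). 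Composing, $\lambda_{-1}(\overline{E})$ is invertible. I expect the delicate point to be making the nilpotence assertion precise — specifically checking that $a(\alpha)\cdot a(\alpha')$ really is $a$ of a form of strictly larger total degree, using the ring axioms of Definition~\ref{203} and the fact that elements of $\widetilde{A}(X_{\mu_n})$ are represented by \emph{smooth} forms — but this is exactly the kind of computation already implicit in \cite{KR1}, Lemma 4.5, so I would cite it where possible and otherwise spell out the grading argument.
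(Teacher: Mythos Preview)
The paper does not supply its own proof of this lemma: it is stated with a bare citation to \cite{KR1}, Lemma~4.5, and no argument is given. So there is nothing in the present paper to compare your proposal against.

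That said, your approach is correct and is essentially the standard one (and, to my knowledge, the one in \cite{KR1}). The two ingredients you isolate are exactly right: (i) the algebraic invertibility of $\lambda_{-1}(E)$ in $K_0(X_{\mu_n},\mu_n)_\rho$ when $E_{\mu_n}=0$, which follows from the splitting principle together with the fact that $1-T^k\in R(\mu_n)_\rho^\times$ for $1\le k\le n-1$ and the nilpotence of $[L_0]-1$ in $K_0$ of a finite-dimensional regular scheme; and (ii) the nilpotence of any class $a(\alpha)$ in $\widehat{K_0}(X_{\mu_n},\mu_n)_\rho$, coming from the product rule $\alpha\cdot\alpha'=\mathrm{dd}^c\alpha\wedge\alpha'$ and the degree bound $\dim_\C X_{\mu_n}(\C)$. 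One small wording issue: you refer to ``Lemma~\ref{205} in its purely algebraic incarnation'' while proving Lemma~\ref{205} itself; to avoid any appearance of circularity, phrase this instead as ``the algebraic analogue in $K_0$'', and note that \cite{Th}, Lemme~3.2 treats the normal bundle specifically, so you should either cite the proof rather than the statement or spell out the (easy) adaptation to an arbitrary $E$ with $E_{\mu_n}=0$.
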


\section{Analytical preliminaries}

\subsection{Equivariant analytic torsion forms}
In \cite{BK}, J.-M. Bismut and K. K\"{o}hler extended the
Ray-Singer analytic torsion to the higher analytic torsion form
$T$ for a holomorphic submersion. The differential equation on
${\rm dd}^cT$ gives a refinement of the Grothendieck-Riemann-Roch
theorem. And later, X. Ma generalized J.-M. Bismut and K.
K\"{o}hler's results to the equivariant case in his article
\cite{Ma1}. In this subsection, we shall recall Ma's construction
of the equivariant analytic torsion form since it will be used to
define a reasonable push-forward morphism between equivariant
arithmetic $K_0$-groups.

Let $f: M\rightarrow B$ be a proper holomorphic submersion of
complex manifolds, and let $TM$, $TB$ be the holomorphic tangent
bundle on $M$, $B$. Denote by $J^{Tf}$ the complex structure on
the real relative tangent bundle $T_{\R}f$, and assume that
$h^{Tf}$ is a hermitian metric on $Tf$ which induces a Riemannian
metric $g^{Tf}$. Let $T^HM$ be a vector subbundle of $TM$ such
that $TM=T^HM\oplus Tf$, we first give the definition of
K\"{a}hler fibration as in \cite{BGS2}, Def. 1.4.

\begin{defn}\label{311}
The triple $(f,h^{Tf},T^HM)$ is said to define a K\"{a}hler
fibration if there exists a smooth real $(1,1)-$form $\omega$
which satisfies the following three conditions:

(i). $\omega$ is closed;

(ii). $T_{\R}^HM$ and $T_{\R}f$ are orthogonal with respect to
$\omega$;

(iii). if $X,Y\in T_{\R}f$, then $\omega(X,Y)=\langle
X,J^{Tf}Y\rangle_{g^{Tf}}$.
\end{defn}

It was shown in \cite{BGS2}, Thm. 1.5 and 1.7 that for a given
K\"{a}hler fibration, the form $\omega$ is unique up to addition
of a form $f^*\eta$ where $\eta$ is a real, closed $(1,1)$-form on
$B$. Moreover, for any real, closed $(1,1)$-form $\omega$ on $M$
such that the bilinear map $X,Y\in T_{\R}f\mapsto
\omega(J^{Tf}X,Y)\in \R$ defines a Riemannian metric and hence a
hermitian product $h^{Tf}$ on $Tf$, we can define a K\"{a}hler
fibration whose associated $(1,1)$-form is $\omega$. In
particular, for a given $f$, a K\"{a}hler metric on $M$ defines a
K\"{a}hler fibration if we choose $T^HM$ to be the orthogonal
complement of $Tf$ in $TM$ and $\omega$ to be the K\"{a}hler form
associated to this metric.

We now recall the Bismut superconnection of a K\"{a}hler
fibration. Let $(\xi,h^\xi)$ be a hermitian holomorphic vector
bundle on $M$. Let $\nabla^{Tf}$, $\nabla^\xi$ be the holomorphic
hermitian connections on $(Tf,h^{Tf})$ and $(\xi,h^\xi)$. Let
$\nabla^{\Lambda(T^{*(0,1)}f)}$ be the connection induced by
$\nabla^{Tf}$ on $\Lambda(T^{*(0,1)}f)$. Then we may define a
connection on $\Lambda(T^{*(0,1)}f)\otimes\xi$ by setting
\begin{displaymath}
\nabla^{\Lambda(T^{*(0,1)}f)\otimes\xi}=\nabla^{\Lambda(T^{*(0,1)}f)}\otimes
1+1\otimes\nabla^\xi.
\end{displaymath} Let $E$ be the
infinite-dimensional bundle on $B$ whose fibre at each point $b\in
B$ consists of the $C^\infty$ sections of
$\Lambda(T^{*(0,1)}f)\otimes\xi\mid_{f^{-1}b}$. This bundle $E$ is
a smooth $\Z$-graded bundle. We define a connection $\nabla^E$ on
$E$ as follows. If $U\in T_{\R}B$, let $U^H$ be the lift of $U$ in
$T^H_{\R}M$ so that $f_*U^H=U$. Then for every smooth section $s$
of $E$ over $B$, we set
\begin{displaymath}
\nabla_U^Es=\nabla_{U^H}^{\Lambda(T^{*(0,1)}f)\otimes\xi}s.
\end{displaymath} For $b\in B$, let $\overline{\partial}^{Z_b}$ be
the Dolbeault operator acting on $E_b$, and let
$\overline{\partial}^{Z_b*}$ be its formal adjoint with respect to
the canonical hermitian product on $E_b$ (cf. \cite{Ma1}, 1.2).
Let $C(T_{\R}f)$ be the Clifford algebra of $(T_{\R}f,h^{Tf})$,
then the bundle $\Lambda(T^{*(0,1)}f)\otimes\xi$ has a natural
$C(T_{\R}f)$-Clifford module structure. Actually, if $U\in Tf$,
let $U'\in T^{*(0,1)}f$ correspond to $U$ defined by
$U'(\cdot)=h^{Tf}(U,\cdot)$, then for $U, V\in Tf$ we set
\begin{displaymath}
c(U)=\sqrt{2}U'\wedge,\quad
c(\overline{V})=-\sqrt{2}i_{\overline{V}}
\end{displaymath} where
$i_{(\cdot)}$ is the contraction operator (cf. \cite{BGV},
Definition 1.6). Moreover, if $U,V\in T_{\R}B$, we set
$T(U^H,V^H)=-P^{Tf}[U^H,V^H]$ where $P^{Tf}$ stands for the
canonical projection from $TM$ to $Tf$.

\begin{defn}\label{312}
Let $e_1,\ldots,e_{2m}$ be a basis of $T_{\R}B$, and let
$e^1,\ldots,e^{2m}$ be the dual basis of $T_{\R}^*B$. Then the
element
\begin{displaymath}
c(T)=\frac{1}{2}\sum_{1\leq\alpha,\beta\leq 2m}e^\alpha\wedge
e^\beta\widehat{\otimes}c(T(e_\alpha^H,e_\beta^H))
\end{displaymath} is a section of
$(f^*\Lambda(T^*_{\R}B)\widehat{\otimes}{\rm
End}(\Lambda(T^{*(0,1)}f)\otimes\xi))^{\rm odd}$.
\end{defn}

\begin{defn}\label{313}
For $u>0$, the Bismut superconnection on $E$ is the differential
operator
\begin{displaymath}
B_u=\nabla^E+\sqrt{u}(\overline{\partial}^Z+\overline{\partial}^{Z*})-\frac{1}{2\sqrt{2u}}c(T)
\end{displaymath} on
$f^*(\Lambda(T_{\R}^*B))\widehat{\otimes}(\Lambda(T^{*(0,1)}f)\otimes\xi)$.
\end{defn}

\begin{defn}\label{314}
Let $N_V$ be the number operator on
$\Lambda(T^{*(0,1)}f)\otimes\xi$ and on $E$, namely $N_V$ acts as
multiplication by $p$ on $\Lambda^p(T^{*(0,1)}f)\otimes\xi$. For
$U,V\in T_{\R}B$, set
$\omega^{H\overline{H}}(U,V)=\omega^M(U^H,V^H)$ where $\omega^M$
is the closed form in the definition of K\"{a}hler fibration.
Furthermore, for $u>0$, set
$N_u=N_V+\frac{i\omega^{H\overline{H}}}{u}$. $N_u$ is a section of
$(f^*\Lambda(T^*_{\R}B)\widehat{\otimes}{\rm
End}(\Lambda(T^{*(0,1)}f)\otimes\xi))^{\rm even}$.
\end{defn}

We now turn to the equivariant case. Let $G$ be a compact Lie
group, we shall assume that all complex manifolds and holomorphic
morphisms considered above are $G-$equivariant and all metrics are
$G$-invariant. We will additionally assume that the direct images
$R^kf_*\xi$ are all locally free so that the $G$-equivariant
coherent sheaf $R^\cdot f_*\xi$ is locally free and hence a
$G$-equivariant vector bundle over $B$. \cite{Ma1}, 1.2 gives a
$G$-invariant hermitian metric (the $L^2$-metric) $h^{R^\cdot
f_*\xi}$ on the vector bundle $R^\cdot f_*\xi$.

For $g\in G$, let $M_g=\{x\in M\mid g\cdot x=x\}$ and $B_g=\{b\in
B\mid g\cdot b=b\}$ be the fixed point submanifolds, then $f$
induces a holomorphic submersion $f_g: M_g\rightarrow B_g$. Let
$\Phi$ be the homomorphism $\alpha\mapsto (2i\pi)^{-{\rm
deg}\alpha/2}$ of $\Lambda^{\rm even}(T_{\R}^*B)$ into itself. We
put
\begin{displaymath}
{\rm ch}_g(R^\cdot f_*\xi,h^{R^\cdot f_*\xi})=\sum_{k=0}^{{\rm
dim}M-{\rm dim}B}(-1)^k{\rm ch}_g(R^k f_*\xi,h^{R^k f_*\xi})
\end{displaymath} and
\begin{displaymath}
{\rm ch}_g'(R^\cdot f_*\xi,h^{R^\cdot f_*\xi})=\sum_{k=0}^{{\rm
dim}M-{\rm dim}B}(-1)^kk{\rm ch}_g(R^k f_*\xi,h^{R^k f_*\xi}).
\end{displaymath}

\begin{defn}\label{315}
For $s\in \C$ with ${\rm Re}(s)>1$, let
\begin{displaymath}
\zeta_1(s)=-\frac{1}{\Gamma(s)}\int_0^1 u^{s-1}(\Phi{\rm
Tr_s}[gN_u{\rm exp}(-B_u^2)]-{\rm ch}_g'(R^\cdot f_*\xi,h^{R^\cdot
f_*\xi})){\rm d}u
\end{displaymath} and similarly for $s\in \C$
with ${\rm Re}(s)<\frac{1}{2}$, let
\begin{displaymath}
\zeta_2(s)=-\frac{1}{\Gamma(s)}\int_1^\infty u^{s-1}(\Phi{\rm
Tr_s}[gN_u{\rm exp}(-B_u^2)]-{\rm ch}_g'(R^\cdot f_*\xi,h^{R^\cdot
f_*\xi})){\rm d}u.
\end{displaymath}
\end{defn}

X. Ma proves that $\zeta_1(s)$ extends to a holomorphic function
of $s\in \C$ near $s=0$ and $\zeta_2(s)$ is a holomorphic function
of $s$.

\begin{defn}\label{316}
The smooth form $T_g(\omega^M,h^\xi):=\frac{\partial}{\partial
s}(\zeta_1+\zeta_2)(0)$ on $B_g$ is called the equivariant
analytic torsion form.
\end{defn}

\begin{thm}\label{317}
The form $T_g(\omega^M,h^\xi)$ lies in $\bigoplus_{p\geq
0}A^{p,p}(B_g)$ and satisfies the following differential equation
\begin{displaymath}
{\rm dd}^cT_g(\omega^M,h^\xi)={\rm ch}_g(R^\cdot f_*\xi,h^{R^\cdot
f_*\xi})-\int_{M_g/{B_g}}{\rm Td}_g(Tf,h^{Tf}){\rm
ch}_g(\xi,h^\xi).
\end{displaymath} Here $A^{p,p}(B_g)$ stands for
the space of smooth forms on $B_g$ of type $(p,p)$.
\end{thm}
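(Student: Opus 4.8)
The statement is X.~Ma's theorem from \cite{Ma1}, the equivariant analogue of J.-M.~Bismut and K.~K\"ohler's anomaly/curvature formula for the analytic torsion form in \cite{BK}, so the plan is to recall the structure of that proof with the automorphism $g$ carried along. Throughout one works with the restricted fibration $f_g\colon M_g\rightarrow B_g$ and the metric data inherited from $(f,h^{Tf},T^HM)$ and $(\xi,h^\xi)$; write $\alpha_u:=\Phi\,{\rm Tr}_s[g\exp(-B_u^2)]$ and $\beta_u:=\Phi\,{\rm Tr}_s[gN_u\exp(-B_u^2)]$ for the smooth forms on $B_g$ appearing in Definition~\ref{315}.

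\textbf{Step 1: bigrading, closedness and transgression in $u$.} Since $(f,h^{Tf},T^HM)$ is a K\"ahler fibration, the Bismut superconnection $B_u$ splits compatibly with the bigrading on $f^*\Lambda(T^*_{\R}B)\widehat{\otimes}(\Lambda(T^{*(0,1)}f)\otimes\xi)$ and is built from $\overline{\partial}^Z$, $\overline{\partial}^{Z*}$ and the K\"ahler form in a manner symmetric under the exchange of $\partial$ and $\overline{\partial}$. Combining this with Duhamel's formula, exactly as in \cite{BK}, Section~2 and its equivariant refinement in \cite{Ma1}, one obtains: (a) $\alpha_u$ is $d$-closed and its de Rham class on $B_g$ is independent of $u$; (b) $\alpha_u$ and $\beta_u$ are sums of forms of type $(p,p)$ on $B_g$ with $F_\infty^*(\cdot)=(-1)^p(\cdot)$ — in particular this already gives $T_g(\omega^M,h^\xi)\in\bigoplus_{p\geq 0}A^{p,p}(B_g)$; and (c) a transgression identity expressing $u\,\frac{\partial}{\partial u}\beta_u$ through ${\rm dd}^c\alpha_u$ modulo a $d$-exact form on $B_g$.

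\textbf{Step 2: asymptotics as $u\to\infty$ and $u\to 0$.} As $u\to\infty$, the standing hypothesis that every $R^kf_*\xi$ is locally free gives a uniform positive gap above $0$ in the fibrewise Dolbeault spectrum, so $\exp(-B_u^2)$ converges, together with all $B_g$-derivatives and at exponential rate, to the $L^2$-projection onto fibrewise harmonic forms; hence $\alpha_u={\rm ch}_g(R^\cdot f_*\xi,h^{R^\cdot f_*\xi})+O(e^{-cu})$ and $\beta_u={\rm ch}_g'(R^\cdot f_*\xi,h^{R^\cdot f_*\xi})+O(e^{-cu})$. As $u\to 0$ one invokes the equivariant local family index theorem: localising the heat kernel of $B_u^2$ near the fixed-point submanifold $M_g\subset M$, applying Getzler's rescaling in the fibre directions and an extra rescaling transverse to $M_g$ dictated by the $g$-action on the normal bundle, one obtains
\begin{displaymath}
\lim_{u\to 0}\alpha_u=\int_{M_g/B_g}{\rm Td}_g(Tf,h^{Tf}){\rm ch}_g(\xi,h^\xi),
\end{displaymath}
and an expansion $\beta_u=\sum_{j=1}^{N}b_j\,u^{-j}+b_0+O(u)$ with finite polar part and no logarithmic terms. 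These are the precise inputs making $\zeta_1$ extend holomorphically near $s=0$ and $\zeta_2$ entire, as stated before Definition~\ref{316}.

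\textbf{Step 3: conclusion, and the main obstacle.} By construction $T_g(\omega^M,h^\xi)=\frac{\partial}{\partial s}(\zeta_1+\zeta_2)(0)$; the standard Mellin-transform manipulation of \cite{BK} rewrites this as a regularised integral over $u\in(0,\infty)$ of $\beta_u$ with its $u\to 0$ polar model and its $u\to\infty$ model subtracted, plus explicit local correction terms. Applying ${\rm dd}^c$, using (c) to trade $\beta_u$ for $-u\,\frac{\partial}{\partial u}\beta_u$ up to the $d$-exact term (which dies under ${\rm dd}^c$), integrating by parts in $u$ and inserting the asymptotics of Step~2, the boundary term at $u=\infty$ yields ${\rm ch}_g(R^\cdot f_*\xi,h^{R^\cdot f_*\xi})$, the boundary term at $u\to 0$ yields $-\int_{M_g/B_g}{\rm Td}_g(Tf,h^{Tf}){\rm ch}_g(\xi,h^\xi)$, and the polar and correction terms cancel, giving the claimed equation. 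The genuine obstacle is the small-time analysis of Step~2 — proving the asymptotic expansion of $\alpha_u$ and $\beta_u$ as $u\to 0$, the absence of logarithmic terms, and the identification of the leading term of $\alpha_u$ with the fibre integral of ${\rm Td}_g\cdot{\rm ch}_g$; this equivariant local family index theorem is Ma's main analytic work in \cite{Ma1}, and once it (and the $u\to\infty$ spectral gap) are granted the rest of the argument is essentially formal.
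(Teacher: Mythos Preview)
The paper's own ``proof'' of this theorem is a single citation: \emph{This is \cite{Ma1}, Theorem 2.12.} You correctly identify the statement as Ma's theorem and then go well beyond the paper by sketching the actual argument; the outline you give (bigrading and transgression for the Bismut superconnection, large-$u$ spectral-gap asymptotics, small-$u$ equivariant local family index expansion, Mellin-transform bookkeeping) is the right architecture and matches the Bismut--K\"ohler/Ma proof.

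Two minor points of polish rather than genuine gaps. First, the reference to $F_\infty^*(\cdot)=(-1)^p(\cdot)$ in Step~1(b) is out of place: Theorem~\ref{317} is stated in the purely complex-analytic setting of Section~4.1, where $B_g$ is just a complex manifold and $A^{p,p}(B_g)$ carries no real-structure condition; the $F_\infty$-compatibility only enters later, in the arithmetic applications. Second, your transgression identity (c) is phrased a bit loosely: the precise relation is of the form $u\,\partial_u\alpha_u=-{\rm dd}^c\beta_u$ modulo $\partial$- and $\overline{\partial}$-exact terms (rather than the other way around), and it is this that, after applying ${\rm dd}^c$ to the Mellin integral defining $T_g$ and integrating by parts in $u$, produces the boundary contributions $\alpha_\infty={\rm ch}_g(R^\cdot f_*\xi,h^{R^\cdot f_*\xi})$ and $\alpha_0=\int_{M_g/B_g}{\rm Td}_g\cdot{\rm ch}_g$. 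With those adjustments your sketch is sound and, as you say, the hard analytic input is the small-time expansion established in \cite{Ma1}.
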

\begin{proof}
This is \cite{Ma1}, Theorem 2.12.
\end{proof}

We define a secondary characteristic class
\begin{displaymath}
\widetilde{{\rm ch}}_g(R^\cdot f_*\xi,h^{R^\cdot
f_*\xi},h'^{R^\cdot f_*\xi}):=\sum_{k=0}^{{\rm dim}M-{\rm
dim}B}(-1)^k\widetilde{{\rm ch}}_g(R^k f_*\xi,h^{R^k
f_*\xi},h'^{R^k f_*\xi})
\end{displaymath}
such that it satisfies the following differential equation
\begin{displaymath}
{\rm dd}^c\widetilde{{\rm ch}}_g(R^\cdot f_*\xi,h^{R^\cdot
f_*\xi},h'^{R^\cdot f_*\xi})={\rm ch}_g(R^\cdot f_*\xi,h^{R^\cdot
f_*\xi})-{\rm ch}_g(R^\cdot f_*\xi,h'^{R^\cdot f_*\xi}),
\end{displaymath}
then the anomaly formula can be described as follows.

\begin{thm}\label{318}(Anomaly formula)
Let $\omega'$ be the form associated to another K\"{a}hler
fibration for $f: M\rightarrow B$. Let $h'^{Tf}$ be the metric on
$Tf$ in this new fibration and let $h'^\xi$ be another metric on
$\xi$. The following identity holds in
$\widetilde{A}(B_g):=\bigoplus_{p\geq 0}(A^{p,p}(B_g)/{({\rm
Im}\partial+{\rm Im}\overline{\partial})})$:
\begin{eqnarray*}
T_g(\omega^M,h^\xi)-T_g(\omega'^M,h'^\xi)&=&\widetilde{{\rm
ch}}_g(R^\cdot f_*\xi,h^{R^\cdot f_*\xi},h'^{R^\cdot
f_*\xi})-\int_{M_g/{B_g}}[\widetilde{{\rm
Td}}_g(Tf,h^{Tf},h'^{Tf}){\rm ch}_g(\xi,h^\xi)\\
&&+{\rm Td}_g(Tf,h'^{Tf})\widetilde{{\rm
ch}}_g(\xi,h^\xi,h'^\xi)].
\end{eqnarray*}
In particular, the class of $T_g(\omega^M,h^\xi)$ in
$\widetilde{A}(B_g)$ only depends on $(h^{Tf},h^\xi)$.
\end{thm}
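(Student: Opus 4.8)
I would prove the anomaly formula by a transgression argument in a deformation parameter, following the pattern of Bismut--K\"{o}hler's anomaly formula for higher torsion forms (\cite{BK}) and its equivariant refinement by X. Ma (\cite{Ma1}). Connect the two K\"{a}hler fibrations by a smooth path $\ell\in[0,1]$, with associated metrics $h^{Tf}_\ell$ on $Tf$ and $h^\xi_\ell$ on $\xi$ so that $(h^{Tf}_0,h^\xi_0)=(h^{Tf},h^\xi)$ and $(h^{Tf}_1,h^\xi_1)=(h'^{Tf},h'^\xi)$; since both sides of the desired identity are additive under composition of fibrations, it is enough to prove that $\frac{\partial}{\partial\ell}T_{g,\ell}$ equals the $\ell$-derivative of the right-hand side in $\widetilde{A}(B_g)$, where $T_{g,\ell}$ denotes the torsion form of Definition~\ref{316} built from the superconnection $B_{u,\ell}$ of Definition~\ref{313} and the operator $N_{u,\ell}$ of Definition~\ref{314}. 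A by-product will be that this derivative involves only the $\ell$-derivatives of the metrics, which both closes the transgression and yields the last assertion of the theorem.

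The technical core is a pointwise transgression identity for the integrand. Writing $\beta_\ell(u)=\Phi\,{\rm Tr_s}[gN_{u,\ell}\exp(-B_{u,\ell}^2)]$, I would apply Duhamel's formula to $\frac{\partial}{\partial\ell}\exp(-B_{u,\ell}^2)$ and use that ${\rm Tr_s}$ annihilates supercommutators and that $g$ commutes with $B_{u,\ell}$ to obtain an identity of the shape
\begin{displaymath}
\frac{\partial}{\partial\ell}\beta_\ell(u)={\rm dd}^c\gamma_\ell(u)-\frac{\partial}{\partial u}\bigl(u\,\delta_\ell(u)\bigr)
\end{displaymath}
with $\gamma_\ell(u)$ and $\delta_\ell(u)$ explicit local supertraces involving $gN_{u,\ell}$, $\frac{\partial}{\partial\ell}B_{u,\ell}^2$ and $\exp(-B_{u,\ell}^2)$. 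Substituting this into the Mellin integrals defining $\zeta_1$ and $\zeta_2$ in Definition~\ref{315}, differentiating in $\ell$ under the integral sign, integrating by parts in $u$, and taking $\frac{\partial}{\partial s}$ at $s=0$, the ${\rm dd}^c\gamma_\ell$ term is ${\rm dd}^c$-exact and so vanishes in $\widetilde{A}(B_g)$, while the integration by parts in $u$ leaves only the boundary contributions at $u=\infty$ and $u=0$.

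It then remains to identify these two boundary terms. As $u\to\infty$, $\exp(-B_{u,\ell}^2)$ concentrates on the fibrewise harmonic forms, and in view of X. Ma's description of the $L^2$-metric $h^{R^\cdot f_*\xi}$ recalled earlier in this subsection the $u=\infty$ term integrates over $\ell$ to $\widetilde{{\rm ch}}_g(R^\cdot f_*\xi,h^{R^\cdot f_*\xi},h'^{R^\cdot f_*\xi})$; the counterterm ${\rm ch}_g'(R^\cdot f_*\xi,h^{R^\cdot f_*\xi})$ subtracted in Definition~\ref{315} is exactly what makes this limit finite. As $u\to0$, the equivariant family local index theorem---the Bismut-type small-time asymptotic expansion of the superconnection heat kernel in the presence of the group action---shows that the $u=0$ term converges, after integration in $\ell$, to the fibrewise integral over $M_g/{B_g}$ of the Bott--Chern transgression of ${\rm Td}_g(Tf){\rm ch}_g(\xi)$, namely $\int_{M_g/{B_g}}[\widetilde{{\rm Td}}_g(Tf,h^{Tf},h'^{Tf}){\rm ch}_g(\xi,h^\xi)+{\rm Td}_g(Tf,h'^{Tf})\widetilde{{\rm ch}}_g(\xi,h^\xi,h'^\xi)]$, the product shape of the answer being dictated by the Leibniz rule for secondary classes applied to the factorisation ${\rm Td}_g\cdot{\rm ch}_g$ in the differential equation of Theorem~\ref{317}. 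Combining the three contributions gives the identity, and the final assertion follows at once since its right-hand side depends only on $(h^{Tf},h^\xi)$ and $(h'^{Tf},h'^\xi)$.

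The step I expect to be the main obstacle is the analytic bookkeeping that legitimates the above manipulations: proving that $\beta_\ell(u)$ and $\frac{\partial}{\partial\ell}\beta_\ell(u)$ admit asymptotic expansions as $u\to0$ uniform in $\ell$ (the delicate equivariant family local index computation, in which the $g$-action on the normal bundle of $M_g$ in $M$ enters and forces the analysis to be localised near $M_g$ rather than on $M$), establishing exponentially fast and $\ell$-uniform convergence as $u\to\infty$, and justifying the interchange of $\frac{\partial}{\partial\ell}$ with the holomorphic continuation of $\zeta_1+\zeta_2$ to a neighbourhood of $s=0$. These uniform estimates are due to Bismut--K\"{o}hler \cite{BK} when $g$ is trivial and to X. Ma \cite{Ma1} in general; once they are granted, the transgression argument above goes through essentially formally.
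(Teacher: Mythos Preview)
Your proof strategy is essentially correct and follows the standard transgression method for anomaly formulae of analytic torsion forms; the boundary analysis at $u\to 0$ (equivariant family local index theorem) and $u\to\infty$ (concentration on fibrewise harmonics) and the uniform estimates you flag are indeed the substantive content.

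However, the paper does not prove this theorem at all: its entire proof reads ``This is \cite{Ma1}, Theorem 2.13.'' The result is taken as input from X.~Ma's paper, where the transgression argument you outline is carried out in full. So your proposal is not so much a different route as an expansion of the cited reference. For the purposes of this paper the theorem is a black box, and your sketch, while correct in spirit, goes well beyond what is required here. If you intend to supply a self-contained proof, you should be aware that the uniform-in-$\ell$ small-time asymptotics and the justification of differentiating the meromorphic continuation at $s=0$ are genuinely technical and occupy a substantial portion of \cite{Ma1}; they cannot be dispatched in a paragraph.
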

\begin{proof}
This is \cite{Ma1}, Theorem 2.13.
\end{proof}

\subsection{Equivariant Bott-Chern singular currents}
The Bott-Chern singular current was defined by J.-M. Bismut, H.
Gillet and C. Soul\'{e} in \cite{BGS3} in order to generalize the
usual Bott-Chern secondary characteristic class to the case where
one considers the resolutions of hermitian vector bundles
associated to the closed immersions of complex manifolds. In
\cite{Bi}, J.-M. Bismut generalized this topic to the equivariant
case. We shall recall Bismut's construction of the equivariant
Bott-Chern singular current in this subsection since it plays a
crucial role in our later arguments. Bismut's construction was
realized via some current valued zeta function which involves the
supertraces of Quillen's superconnections. This is similar to the
non-equivariant case.

As before, let $g$ be the automorphism corresponding to an element
in a compact Lie group $G$. Let $i: Y\rightarrow X$ be an
equivariant closed immersion of $G$-equivariant K\"{a}hler
manifolds, and let $\overline{\eta}$ be an equivariant hermitian
vector bundle on $Y$. Assume that $\overline{\xi}.$ is a complex
of equivariant hermitian vector bundles on $X$ which provides a
resolution of $i_*\overline{\eta}$. We denote the differential of
the complex $\xi.$ by $v$. Note that $\xi.$ is acyclic outside $Y$
and the homology sheaves of its restriction to $Y$ are locally
free and hence they are all vector bundles. We write
$H_n=\mathcal{H}_n(\xi.\mid_Y)$ and define a $\Z$-graded bundle
$H=\bigoplus_nH_n$. For each $y\in Y$ and $u\in TX_y$, we denote
by $\partial_uv(y)$ the derivative of $v$ at $y$ in the direction
$u$ in any given holomorphic trivialization of $\xi.$ near $y$.
Then the map $\partial_uv(y)$ acts on $H_y$ as a chain map, and
this action only depends on the image $z$ of $u$ in $N_y$ where
$N$ stands for the normal bundle of $i(Y)$ in $X$. So we get a
chain complex of holomorphic vector bundles $(H,\partial_zv)$.

Let $\pi$ be the projection from the normal bundle $N$ to $Y$,
then we have a canonical identification of $\Z$-graded chain
complexes
\begin{displaymath}
(\pi^*H,\partial_zv)\cong(\pi^*(\wedge N^\vee\otimes\eta),i_z).
\end{displaymath}
For this, one can see \cite{Bi2}, Section I.b. Moreover, such an
identification is an identification of $G$-bundles which induces a
family of canonical isomorphisms $\gamma_n: H_n\cong\wedge^n
N^\vee\otimes\eta$. Another way to describe these canonical
isomorphisms $\gamma_n$ is applying \cite{GBI}, Exp. VII, Lemma
2.4 and Proposition 2.5. These two constructions coincide because
they are both locally, on a suitable open covering $\{U_j\}_{j\in
J}$, determined by any complex morphism over the identity map of
$\eta\mid_{U_j}$ from $(\xi.\mid_{U_j},v)$ to the minimal
resolution of $\eta\mid_{U_j}$ (e.g. the Koszul resolution). The
advantage of using the construction given in \cite{GBI} is that it
remains valid for arithmetic varieties over any base instead of
the complex numbers. Later in \cite{Bi}, for the use of
normalization, J.-M. Bismut considered the automorphism of
$N^\vee$ defined by multiplying a constant $-\sqrt{-1}$, it
induces an isomorphism of chain complexes
\begin{displaymath}
(\pi^*(\wedge N^\vee\otimes\eta),i_z)\cong (\pi^*(\wedge
N^\vee\otimes\eta),\sqrt{-1}i_z)
\end{displaymath}
and hence
\begin{displaymath}
(\pi^*H,\partial_zv)\cong(\pi^*(\wedge
N^\vee\otimes\eta),\sqrt{-1}i_z).
\end{displaymath}
This identification induces a family of isomorphisms
$\widetilde{\gamma_n}: H_n\cong\wedge^n N^\vee\otimes\eta$. By
finite dimensional Hodge theory, for each $y\in Y$, there is a
canonical isomorphism
\begin{displaymath}
H_y\cong\{f\in \xi._y\mid vf=0, v^*f=0\}
\end{displaymath}
where
$v^*$ is the dual of $v$ with respect to the metrics on $\xi.$.
This means that $H$ can be regarded as a smooth $\Z$-graded
$G$-equivariant subbundle of $\xi$ so that it carries an induced
$G$-invariant metric. On the other hand, we endow $\wedge
N^\vee\otimes \eta$ with the metric induced from $\overline{N}$
and $\overline{\eta}$. J.-M. Bismut introduced the following
definition.

\begin{defn}\label{321}
We say that the metrics on the complex of equivariant hermitian
vector bundles $\overline{\xi}.$ satisfy Bismut assumption (A) if
the identification $(\pi^*H,\partial_zv)\cong(\pi^*(\wedge
N^\vee\otimes\eta),\sqrt{-1}i_z)$ also identifies the metrics, it
is equivalent to the condition that the identification
$(\pi^*H,\partial_zv)\cong(\pi^*(\wedge N^\vee\otimes\eta),i_z)$
identifies the metrics.
\end{defn}

\begin{rem}\label{isometries}
If the metrics on the complex of equivariant hermitian vector
bundles $\overline{\xi}.$ satisfy Bismut assumption (A), then the
isomorphisms $\gamma_n$ and $\widetilde{\gamma_n}$ are all
isometries.
\end{rem}

\begin{prop}\label{322}
There always exist $G$-invariant metrics on $\xi.$ which satisfy
Bismut assumption (A) with respect to the equivariant hermitian
vector bundles $\overline{N}$ and $\overline{\eta}$.
\end{prop}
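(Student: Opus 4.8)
The plan is to construct the required $G$-invariant metrics on the complex $\xi.$ by a two-step argument: first choose \emph{arbitrary} $G$-invariant metrics, then correct them in a neighbourhood of $Y$ so that Bismut assumption (A) holds, and finally patch the corrected local metric to the original global one using a $G$-invariant partition of unity. Since $G$ is compact, averaging over $G$ makes every choice $G$-invariant, so the only real content is the local correction near $Y$ together with the patching; the equivalence of the two forms of assumption (A) stated in Definition~\ref{321} means it suffices to arrange that the identification $(\pi^*H,\partial_zv)\cong(\pi^*(\wedge N^\vee\otimes\eta),i_z)$ is an isometry, which is what I will aim for.

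First I would equip each $\xi_p$ with some $G$-invariant hermitian metric (average an arbitrary one over $G$) and equip $H=\bigoplus H_n$ with the induced metric coming from the finite-dimensional Hodge-theory identification $H_y\cong\{f\in\xi._y\mid vf=0,\ v^*f=0\}$; likewise give $\wedge N^\vee\otimes\eta$ the metric induced from $\overline N$ and $\overline\eta$. Over a suitable $G$-invariant open cover $\{U_j\}$ of a tubular neighbourhood of $Y$, the canonical isomorphism $\gamma=\bigoplus\gamma_n:H\xrightarrow{\ \sim\ }\wedge N^\vee\otimes\eta$ is a $G$-equivariant isomorphism of smooth vector bundles, so $h^{\wedge N^\vee\otimes\eta}$ pulls back under $\gamma$ to a second $G$-invariant metric $\gamma^*h^{\wedge N^\vee\otimes\eta}$ on $H$. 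Because $H$ is a $G$-equivariant subbundle of $\xi|_Y$, I can extend this ``desired'' metric on $H$ to a $G$-invariant metric $h'$ on $\xi|_{V}$ over a (possibly smaller) $G$-invariant neighbourhood $V$ of $Y$ — for instance by using the orthogonal splitting $\xi|_Y=H\oplus H^\perp$ with respect to the original metric, keeping the original metric on $H^\perp$ and on the off-diagonal, and then transporting this splitting off $Y$ — so that along $Y$ the metric $h'$ restricts to $\gamma^*h^{\wedge N^\vee\otimes\eta}$ on $H$. The point is that Bismut assumption (A) is a condition only on the restriction of the metrics to $Y$ (via the normal-bundle model), so this $h'$ on $V$ already satisfies (A).

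Finally I would glue: choose a $G$-invariant smooth function $\varphi$ supported in $V$ and identically $1$ on a smaller neighbourhood $V'$ of $Y$ (obtained from an ordinary bump function by averaging over $G$, using compactness of $G$ and the fact that metrics form a convex cone so convex combinations of $G$-invariant metrics are again $G$-invariant metrics), and set $h_{\mathrm{new}}=\varphi\,h'+(1-\varphi)\,h_{\mathrm{old}}$ on each $\xi_p$. This is a globally defined $G$-invariant hermitian metric on $\xi.$ which coincides with $h'$ near $Y$, hence satisfies Bismut assumption (A), and the differential $v$ as well as $v^*$ behave as required. I expect the main obstacle to be the second step: checking carefully that one can extend the prescribed metric on the subbundle $H$ off $Y$ in a $G$-equivariant way \emph{without} disturbing the normal-bundle identification — i.e. verifying that assumption (A) genuinely depends only on the $1$-jet data along $Y$ encoded by $\gamma$, so that no compatibility is lost when one passes from $Y$ to the tubular neighbourhood and then cuts off. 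The equivariance bookkeeping (that all the Hodge-theoretic and Koszul identifications are $G$-equivariant, as recalled before Definition~\ref{321}) is what guarantees the averaging and patching stay within the class of admissible metrics.
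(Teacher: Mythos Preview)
The paper does not actually prove this proposition: its entire proof is the single line ``This is \cite{Bi}, Proposition~3.5.'' So there is no in-paper argument to compare your proposal against; you are supplying what the paper outsources to Bismut.

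Your sketch is essentially the standard construction and is correct in outline. One small point worth tightening: you describe modifying the metric on $\xi|_Y$ by keeping $H$ and $H^\perp$ orthogonal, keeping the old metric on $H^\perp$, and putting the prescribed metric $\gamma^*h^{\wedge N^\vee\otimes\eta}$ on $H$. You should note explicitly why, with respect to this \emph{new} metric, the Hodge-theoretic harmonic subspace is still $H$. The reason is that $\operatorname{Im} v_{n+1}\subset H^\perp$ (by the original Hodge decomposition), and since your new metric leaves $H\perp H^\perp$ and agrees with the old one on $H^\perp$, the orthogonal complement of $\operatorname{Im} v_{n+1}$ inside $\operatorname{Ker} v_n$ is unchanged and equals $H$; hence the induced metric on $H$ is exactly the prescribed one. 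Without this check your step ``along $Y$ the metric $h'$ restricts to $\gamma^*h^{\wedge N^\vee\otimes\eta}$ on $H$'' would be circular, since the harmonic embedding $H\hookrightarrow\xi|_Y$ itself depends on the metric.

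Also, your worry about ``$1$-jet data'' is slightly off: the $1$-jet appearing in the identification $(\pi^*H,\partial_z v)\cong(\pi^*(\wedge N^\vee\otimes\eta),i_z)$ is the $1$-jet of the fixed differential $v$, not of the metric. Assumption~(A) is purely a $0$-th order condition on the metric along $Y$ (namely that the induced metric on $H$ matches that on $\wedge N^\vee\otimes\eta$ via $\gamma$), so your extension-and-cutoff step is harmless once the metric is correct on $Y$ itself.
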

\begin{proof}
This is \cite{Bi}, Proposition 3.5.
\end{proof}

From now on we always suppose that the metrics on a resolution
satisfy Bismut assumption (A). Let $\nabla^{\xi}$ be the canonical
hermitian holomorphic connection on $\xi.$, then for each $u>0$,
we may define a $G$-invariant superconnection
\begin{displaymath}
C_u:=\nabla^\xi+\sqrt{u}(v+v^*)
\end{displaymath} on the
$\Z_2$-graded vector bundle $\xi$. Moreover, let $\Phi$ be the map
$\alpha\in \wedge(T_\R^*X_g)\rightarrow (2\pi i)^{-{\rm
deg}\alpha/2}\alpha\in \wedge(T_\R^*X_g)$ and denote
\begin{displaymath}
({\rm Td}_g^{-1})'(\overline{N}):=\frac{\partial}{\partial
b}\mid_{b=0}({\rm Td}_g(b\cdot {\rm
Id}-\frac{\Omega^{\overline{N}}}{2\pi i})^{-1})
\end{displaymath}
where $\Omega^{\overline{N}}$ is the curvature form associated to
$\overline{N}$. We recall as follows the construction of the
equivariant singular current given in \cite{Bi}, Section VI.

\begin{lem}\label{323}
Let $N_H$ be the number operator on the complex $\xi.$ (of
homological type), then for $s\in \C$ and $0< {\rm
Re}(s)<\frac{1}{2}$, the current valued zeta function
\begin{displaymath}
Z_g(\overline{\xi}.)(s):=\frac{1}{\Gamma(s)}\int_0^\infty
u^{s-1}[\Phi{\rm Tr_s}(N_Hg{\rm exp}(-C_u^2))+({\rm
Td}_g^{-1})'(\overline{N}){\rm
ch}_g(\overline{\eta})\delta_{Y_g}]{\rm d}u
\end{displaymath} is
well-defined on $X_g$ and it has a meromorphic continuation to the
complex plane which is holomorphic at $s=0$.
\end{lem}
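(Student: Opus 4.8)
The plan is to write $Z_g(\overline{\xi}.)(s)=\frac{1}{\Gamma(s)}\int_0^\infty u^{s-1}\theta_g(u)\,{\rm d}u$, where $\theta_g(u):=\Phi{\rm Tr_s}(N_Hg{\rm exp}(-C_u^2))+({\rm Td}_g^{-1})'(\overline{N}){\rm ch}_g(\overline{\eta})\delta_{Y_g}$, to split the integral as $\int_0^1+\int_1^\infty$, and to control the current $\theta_g(u)$ — tested against a fixed smooth form on $X_g$ — as $u\to0^+$ and as $u\to\infty$; the meromorphic continuation and the holomorphy at $s=0$ then come formally from the Mellin transform together with the simple zero of $1/\Gamma(s)$ at $s=0$. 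First I would observe that for each fixed $u>0$ the quantity $\Phi{\rm Tr_s}(N_Hg{\rm exp}(-C_u^2))$ is the $g$-supertrace of the exponential of the smooth endomorphism-valued form $C_u^2$, hence a smooth differential form; after restriction to $X_g$ it is a well-defined current there, and since $\delta_{Y_g}$ is the current of integration along $Y_g$, $\theta_g(u)$ is a smooth family of currents on $X_g$.

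For $u\to\infty$ I would use that $\overline{\xi}.$ resolves $i_*\overline{\eta}$, so that $(v+v^*)^2$ is positive off $Y_g$ and the heat contributions localize exponentially near $Y_g$, while on $Y_g$ itself the harmonic bundle $H$ is, under Bismut assumption (A), isometric to $\wedge N^\vee\otimes\eta$ via $\widetilde{\gamma_n}$. The large-time analysis of the superconnection heat operator (the content of \cite{Bi}, Section VI, following \cite{BGS3}) gives that $\Phi{\rm Tr_s}(N_Hg{\rm exp}(-C_u^2))$ converges, as a current on $X_g$, to $-({\rm Td}_g^{-1})'(\overline{N}){\rm ch}_g(\overline{\eta})\delta_{Y_g}$, and more precisely that $\theta_g(u)$ admits an asymptotic expansion in descending powers of $u^{1/2}$ with leading term $O(u^{-1/2})$ — this power being exactly what forces the restriction ${\rm Re}(s)<\frac{1}{2}$. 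Hence $\int_1^\infty u^{s-1}\theta_g(u)\,{\rm d}u$ converges and is holomorphic for ${\rm Re}(s)<\frac{1}{2}$, and it extends to a meromorphic function of $s\in\C$ with poles only at positive half-integers; in particular it is holomorphic near $s=0$.

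For $u\to0^+$ I would invoke the local analysis near $Y_g$ of the superconnection $C_u=\nabla^\xi+\sqrt{u}(v+v^*)$ (again as in \cite{BGS3} and \cite{Bi}, through a rescaling of the Clifford variables): as $u\to0^+$ the form $\Phi{\rm Tr_s}(N_Hg{\rm exp}(-C_u^2))$ tends, as a current on $X_g$, to a limit which is smooth away from $Y_g$ together with a $\delta_{Y_g}$-component built from the equivariant Todd forms of $\overline{N}$ and $\overline{\eta}$, and it admits an asymptotic expansion $\sum_{j\geq0}M_ju^{j/2}$ with no negative powers and no logarithmic terms. Therefore $\theta_g(u)$ stays bounded as $u\to0^+$, so $\int_0^1 u^{s-1}\theta_g(u)\,{\rm d}u$ converges for ${\rm Re}(s)>0$ and extends meromorphically in $s$ with at most simple poles at $s=0,-\frac{1}{2},-1,\dots$. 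Combining the two pieces, $Z_g(\overline{\xi}.)(s)$, a priori defined for $0<{\rm Re}(s)<\frac{1}{2}$, extends to a meromorphic function on $\C$; near $s=0$ the only possible singularity is the simple pole of $\int_0^1 u^{s-1}\theta_g(u)\,{\rm d}u$, and it is killed by the simple zero of $1/\Gamma(s)$ at $s=0$, so $Z_g(\overline{\xi}.)(s)$ is holomorphic at $s=0$ (with value the coefficient $M_0$ of the small-$u$ expansion).

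The main obstacle is establishing the two asymptotic expansions — the localization of the superconnection heat operator on $Y_g$ as $u\to\infty$, and the small-time expansion valid as currents on all of $X_g$ — with the stated remainder orders and uniformly when paired with smooth test forms; these rest on the heat-kernel estimates of \cite{Bi}, and it is exactly here that Bismut assumption (A), through the isometries $\widetilde{\gamma_n}$, enters in order to identify the limiting currents. The remaining bookkeeping with the Mellin transform and with the zero of $1/\Gamma(s)$ is routine.
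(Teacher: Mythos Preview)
The paper does not prove this lemma; it is stated there only as a recall of the construction in \cite{Bi}, Section~VI, so there is no in-paper proof to compare against.  Your overall strategy --- split the Mellin integral at $u=1$, control the asymptotics of $\theta_g(u)$ at the two ends, and absorb the possible simple pole at $s=0$ into the simple zero of $1/\Gamma$ --- is the standard one and is indeed what Bismut carries out.

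However, your description of the small-$u$ behaviour is confused.  The superconnection $C_u=\nabla^\xi+\sqrt{u}(v+v^*)$ here acts on a \emph{finite-rank} graded bundle, not on an infinite-dimensional Dolbeault complex.  As $u\to0^+$ one simply has $C_u\to\nabla^\xi$, and $\Phi{\rm Tr_s}(N_Hg\exp(-C_u^2))$ converges to the smooth form $\Phi{\rm Tr_s}(N_Hg\exp(-(\nabla^\xi)^2))$ on $X_g$: there is no $\delta_{Y_g}$-component in this limit, no localization near $Y_g$, and no need for any ``rescaling of the Clifford variables'' or local-index machinery.  The small-$u$ expansion is just the Taylor expansion of a smooth function of $\sqrt{u}$.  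The $\delta_{Y_g}$-term in $\theta_g(u)$ is a fixed $u$-independent counterterm, so $\theta_g(u)$ is trivially bounded (as a current) near $u=0$ and $\int_0^1 u^{s-1}\theta_g(u)\,{\rm d}u$ converges for ${\rm Re}(s)>0$.  All the genuine analytic work --- the estimates showing that $\Phi{\rm Tr_s}(N_Hg\exp(-C_u^2))$ localizes on $Y_g$ and converges to $-({\rm Td}_g^{-1})'(\overline{N}){\rm ch}_g(\overline{\eta})\delta_{Y_g}$ with remainder $O(u^{-1/2})$ --- goes into the $u\to\infty$ regime, which you described correctly; it is there, not at $u\to0$, that Bismut assumption~(A) enters.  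So your conclusion is right but the attribution of difficulty between the two regimes is reversed.
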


\begin{defn}\label{324}
The equivariant singular Bott-Chern current on $X_g$ associated to
the resolution $\overline{\xi}.$ is defined as
\begin{displaymath}
T_g(\overline{\xi}.):=\frac{\partial}{\partial
s}\mid_{s=0}Z_g(\overline{\xi}.)(s).
\end{displaymath}
\end{defn}

\begin{thm}\label{325}
The current $T_g(\overline{\xi}.)$ is a sum of $(p,p)$-currents
and it satisfies the differential equation
\begin{displaymath}
{\rm dd}^cT_g(\overline{\xi}.)={i_g}_*{\rm
ch}_g(\overline{\eta}){\rm
Td}_g^{-1}(\overline{N})-\sum_k(-1)^k{\rm ch}_g(\overline{\xi}_k).
\end{displaymath} Moreover, the wave front set of
$T_g(\overline{\xi}.)$ is contained in $N^\vee_{g,\R}$.
\end{thm}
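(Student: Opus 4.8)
The statement to prove is Theorem~\ref{325}: that the equivariant singular Bott--Chern current $T_g(\overline{\xi}.)$ is a sum of $(p,p)$-currents, satisfies the differential equation
\begin{displaymath}
{\rm dd}^cT_g(\overline{\xi}.)={i_g}_*{\rm ch}_g(\overline{\eta}){\rm Td}_g^{-1}(\overline{N})-\sum_k(-1)^k{\rm ch}_g(\overline{\xi}_k),
\end{displaymath}
and has wave front set contained in $N^\vee_{g,\R}$.

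\textbf{Plan.} The entire statement is a citation result (it is \cite{Bi}, Theorems~2.5 and~3.8, together with the regularity discussion there), so I would give a short proof that assembles the ingredients already recalled in the excerpt rather than reproving Bismut's analysis from scratch. First I would recall that, by Lemma~\ref{323}, the zeta function $Z_g(\overline{\xi}.)(s)$ is holomorphic at $s=0$, so $T_g(\overline{\xi}.)=\frac{\partial}{\partial s}\mid_{s=0}Z_g(\overline{\xi}.)(s)$ is a well-defined current on $X_g$; the $(p,p)$-type assertion follows because each term $\Phi\,{\rm Tr_s}(N_Hg\,{\rm exp}(-C_u^2))$ and $({\rm Td}_g^{-1})'(\overline{N}){\rm ch}_g(\overline{\eta})\delta_{Y_g}$ is a sum of forms (resp. currents) of even total degree whose components, after applying $\Phi$ and using that $g$ acts holomorphically, are of type $(p,p)$ on $X_g$ — this is exactly the bidegree bookkeeping in \cite{Bi}, and I would just invoke it.

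For the differential equation, the standard mechanism is the transgression identity for superconnection Chern forms: one has $\frac{\partial}{\partial u}\Phi{\rm Tr_s}(g\,{\rm exp}(-C_u^2))={\rm dd}^c\big(\text{something}\big)$, which after the Mellin-transform manipulation defining $Z_g$ converts the $u$-derivative into a $\frac{1}{\Gamma(s)}$-weighted boundary term. Concretely, I would differentiate under the integral sign, use $[{\rm dd}^c, {\rm Tr_s}]$-type identities (Bismut's Duhamel/transgression formula), and read off that ${\rm dd}^c T_g(\overline{\xi}.)$ equals the difference between the $u\to\infty$ limit and the $u\to 0$ limit of $\Phi{\rm Tr_s}(g\,{\rm exp}(-C_u^2))$. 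By the equivariant local family index theorem applied to the finite-dimensional (since the $\xi_k$ are finite rank) complex $(\xi.,v)$, the $u\to 0$ limit is $\sum_k(-1)^k{\rm ch}_g(\overline{\xi}_k)$, while the $u\to\infty$ limit, by Bismut's asymptotic analysis near $Y_g$, is the current ${i_g}_*{\rm ch}_g(\overline{\eta}){\rm Td}_g^{-1}(\overline{N})$; the chosen normalization constant $({\rm Td}_g^{-1})'(\overline{N})$ in the definition of $Z_g$ is precisely what cancels the spurious contribution so that these two limits come out as stated. I would cite \cite{Bi}, Section~VI for these two limit computations rather than carrying them out.

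Finally, for the wave front set: $T_g(\overline{\xi}.)$ is smooth away from $Y_g$ (there the integral converges to a smooth form by ellipticity of $C_u^2$ off $Y_g$, since $v+v^*$ is invertible there), so ${\rm WF}(T_g(\overline{\xi}.))$ is supported over $Y_g$; along $Y_g$ the singularity comes from the $\delta_{Y_g}$-type behaviour of ${\rm exp}(-C_u^2)$ as $u\to\infty$, whose conormal directions lie in $N^\vee_{g,\R}$ by the Gaussian-decay estimates transverse to $Y_g$ — again this is \cite{Bi}, and I would simply quote it. The main obstacle is not conceptual but one of honesty about what is being reproved: all three assertions are genuinely Bismut's hard analytic theorems, so the ``proof'' here is really a pointer, and the only thing that requires care on our side is checking that the normalization conventions in Definition~\ref{324} (the factor $\Phi$, the sign in $C_u$, the $\sqrt{-1}i_z$ normalization from Bismut assumption~(A)) match those in \cite{Bi} so that the differential equation has exactly the signs displayed above. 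I would therefore conclude with: ``This is \cite{Bi}, Theorems~2.5, 3.8 and the regularity results of loc. cit., Section~VI.''
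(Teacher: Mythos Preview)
Your approach is essentially the same as the paper's: both treat this as a citation to Bismut's work \cite{Bi} rather than a result to be proved anew. The paper's proof is a single sentence, ``This follows from \cite{Bi}, Theorem 6.7 and Remark 6.8,'' so your sketch of the transgression mechanism, while accurate in spirit, is more detailed than necessary; note also that your specific pointers (Theorems~2.5 and~3.8) differ from the paper's (Theorem~6.7 and Remark~6.8), so you should double-check the exact location in \cite{Bi}.
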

\begin{proof}
This follows from \cite{Bi}, Theorem 6.7 and Remark 6.8.
\end{proof}

Finally, we recall a theorem concerning the relationship of the
equivariant Bott-Chern singular currents involved in a double
complex. This theorem makes sure that our definition for an
embedding morphism in arithmetic $K_0$-theory is reasonable.

\begin{thm}\label{326}
Let \begin{displaymath} \overline{\chi}:\quad 0\rightarrow
\overline{\eta}_n\rightarrow\cdots\rightarrow\overline{\eta}_1\rightarrow\overline{\eta}_0\rightarrow0
\end{displaymath} be an exact sequence of equivariant hermitian
vector bundles on $Y$. Assume that we have the following double
complex consisting of resolutions of $i_*\overline{\chi}$ such
that all rows are exact sequences.
\begin{displaymath}
\xymatrix{0
\ar[r] & \overline{\xi}_{n,\cdot} \ar[r] \ar[d] & \cdots \ar[r] &
\overline{\xi}_{1,\cdot} \ar[r] \ar[d] &
\overline{\xi}_{0,\cdot} \ar[r] \ar[d] & 0 \\
0 \ar[r] & i_*\overline{\eta}_n \ar[r] & \cdots \ar[r] &
i_*\overline{\eta}_1 \ar[r] & i_*\overline{\eta}_0 \ar[r] & 0.}
\end{displaymath} For each $k$, we write $\overline{\varepsilon}_k$
for the exact sequence
\begin{displaymath} 0\rightarrow
\overline{\xi}_{n,k}\rightarrow \cdots\rightarrow
\overline{\xi}_{1,k}\rightarrow \overline{\xi}_{0,k}\rightarrow 0.
\end{displaymath} Then we have the following equality in
$\widetilde{\mathcal{U}}(X_g):=\bigoplus_{p\geq0}(D^{p,p}(X_g)/({\rm
Im}\partial+{\rm Im}\overline{\partial}))$
\begin{displaymath}
\sum_{j=0}^n(-1)^jT_g(\overline{\xi}_{j,\cdot})={i_g}_*\frac{\widetilde{{\rm
ch}}_g(\overline{\chi})}{{\rm
Td}_g(\overline{N})}-\sum_k(-1)^k\widetilde{{\rm
ch}}_g(\overline{\varepsilon}_k).
\end{displaymath} Here
$D^{p,p}(X_g)$ stands for the space of currents on $X_g$ of type
$(p,p)$.
\end{thm}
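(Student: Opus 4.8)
The plan is to reduce the identity to the anomaly-type behaviour of the equivariant singular Bott-Chern currents under deformation of metrics, combined with a bookkeeping argument on the double complex. First I would set up the double complex of resolutions so that everything in sight is an acyclic complex of equivariant hermitian vector bundles, and choose all metrics on the $\overline{\xi}_{j,\cdot}$ so that Bismut assumption (A) holds simultaneously with respect to $\overline{N}$ and the $\overline{\eta}_j$; this is legitimate by Proposition~\ref{322} applied to each row, together with the compatibility facts (i)--(iv) recalled in Section~3. The strategy is then to compare both sides of the claimed equality by applying ${\rm dd}^c$ and checking that the difference is ${\rm dd}^c$-closed and lies in the image of $a$, i.e.\ is a genuine smooth-form class, and finally to pin down that class by a degeneration argument.

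The key steps, in order, are as follows. (1) Apply Theorem~\ref{325} to each current $T_g(\overline{\xi}_{j,\cdot})$; taking the alternating sum over $j$ and using that the rows of the double complex are exact, the terms ${i_g}_*{\rm ch}_g(\overline{\eta}_j){\rm Td}_g^{-1}(\overline{N})$ assemble (via the differential equation for $\widetilde{\rm ch}_g(\overline{\chi})$ pushed forward by ${i_g}_*$, which is legitimate since ${i_g}_*$ commutes with ${\rm dd}^c$ up to the expected boundary terms) into ${\rm dd}^c\big({i_g}_*\frac{\widetilde{\rm ch}_g(\overline{\chi})}{{\rm Td}_g(\overline{N})}\big)$, while the terms $\sum_k(-1)^k{\rm ch}_g(\overline{\xi}_{j,k})$ assemble into ${\rm dd}^c\big(\sum_k(-1)^k\widetilde{\rm ch}_g(\overline{\varepsilon}_k)\big)$ by the differential equation defining the equivariant Bott-Chern secondary classes of the columns $\overline{\varepsilon}_k$. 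Hence the two sides of the asserted identity have the same ${\rm dd}^c$. (2) Show the difference is represented by a smooth form, not merely a current: this follows because the wave front set of each $T_g(\overline{\xi}_{j,\cdot})$ is contained in $N^\vee_{g,\R}$ (Theorem~\ref{325}), and the alternating sum over the exact rows kills the singular part supported along $Y_g$ — concretely one checks that the leading singularity of $\sum_j(-1)^jT_g(\overline{\xi}_{j,\cdot})$ along $Y_g$ cancels because the corresponding alternating sum of the bundles ${\rm ch}_g(\overline{\eta}_j)$ vanishes in cohomology along the exact sequence $\overline{\chi}$. (3) Having reduced to an equality of smooth-form classes with equal ${\rm dd}^c$, identify the classes by a deformation/functoriality argument: choose a path of metrics degenerating the double complex to a direct sum of ``elementary'' double complexes (e.g.\ built from Koszul resolutions of the minimal resolution, as in the discussion following Definition~\ref{321}), where both sides can be computed termwise; the anomaly formula for $T_g$ under change of metric on the resolution (the analogue of Theorem~\ref{318} in Bismut's setting, \cite{Bi}) controls how the left side varies, and the standard cocycle/additivity identities for $\widetilde{\rm ch}_g$ control the right side, so the two variations agree and it suffices to verify the base case.

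The main obstacle I expect is step (3), the identification of the constant of integration: showing that the two ${\rm dd}^c$-primitive classes actually coincide, and not merely up to an unspecified closed class. This is exactly the point where one needs the full strength of Bismut's anomaly formula for singular currents together with a careful functorial normalization — in particular one must check that the isomorphisms $\gamma_n$, $\widetilde{\gamma_n}$ and the assumption (A) are preserved along the chosen deformation, so that no spurious secondary terms appear. Steps (1) and (2) are essentially formal manipulations with the differential equations of Section~4.2 and the projection formula for ${i_g}_*$; the real content, and the place where complicated but unavoidable computations may enter, is in reducing the general double complex to the base case and matching the two anomaly contributions there. I would therefore organize the write-up so that the deformation invariance is isolated as a lemma, proved once, and the theorem follows by specialization.
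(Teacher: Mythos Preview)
The paper does not prove this statement at all: its proof reads in full ``This is \cite{KR1}, Theorem 3.14.'' So there is no argument in the present paper to compare your proposal against; the result is quoted from K\"ohler--Roessler as a black box and used later in Section~5.

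Your outline is a reasonable sketch of the kind of argument that underlies such identities, and you correctly isolate step~(3) as the substantive point. One imprecision worth flagging in step~(2): you argue that $\sum_j(-1)^jT_g(\overline{\xi}_{j,\cdot})$ is itself smooth because the alternating sum of the ${\rm ch}_g(\overline{\eta}_j)$ vanishes. But the right-hand side contains ${i_g}_*\big(\widetilde{\rm ch}_g(\overline{\chi})/{\rm Td}_g(\overline{N})\big)$, which is a genuine $\delta$-type current supported on $Y_g$ and is not smooth in general. So the correct claim is not that the left side alone is smooth, but that the \emph{difference} of the two sides is; the singular parts of the $T_g$'s must be matched against the ${i_g}_*$ term, not annihilated among themselves. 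This is more than a cosmetic point, since your stated reason (vanishing of $\sum_j(-1)^j{\rm ch}_g(\overline{\eta}_j)$) would, if taken literally, force the ${i_g}_*$ term to vanish as well. The actual proof in \cite{KR1} proceeds via Bismut's anomaly formulae for the singular currents and an axiomatic uniqueness argument for secondary classes attached to such double complexes, which is close in spirit to your step~(3); if you want to reconstruct it, that is the place to look.
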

\begin{proof}
This is \cite{KR1}, Theorem 3.14.
\end{proof}

\subsection{Bismut-Ma's immersion formula}
In this subsection, we shall recall Bismut-Ma's immersion formula
which reflects the behaviour of the equivariant analytic torsion
forms of a K\"{a}hler fibration under the composition of an
immersion and a submersion. Such a formula can be used to measure,
in arithmetic $K_0$-theory, the difference between a push-forward
morphism and the composition formed as an embedding morphism
followed by a push-forward morphism.

Let $i: Y\rightarrow X$ be an equivariant closed immersion of
$G$-equivariant K\"{a}hler manifolds. Let $S$ be a complex
manifold with trivial $G$-action, and let $f: Y\rightarrow S$, $l:
X\rightarrow S$ be two equivariant proper holomorphic submersions
such that $f=l\circ i$. Assume that $\overline{\eta}$ is an
equivariant hermitian vector bundle on $Y$ and $\overline{\xi}.$
provides a resolution of $i_*\overline{\eta}$ on $X$ whose metrics
satisfy Bismut assumption (A). Let $\omega^Y$, $\omega^X$ be the
real, closed and $G$-invariant $(1,1)$-forms on $Y$, $X$ which
induce the K\"{a}hler fibrations with respect to $f$ and $l$
respectively. We shall assume that $\omega^Y$ is the pull-back of
$\omega^X$ so that the K\"{a}hler metric on $Y$ is induced by the
K\"{a}hler metric on $X$. As before, denote by $N$ the normal
bundle of $i(Y)$ in $X$. Consider the following exact sequence
\begin{displaymath}
\overline{\mathcal{N}}:\quad 0\rightarrow \overline{Tf}\rightarrow
\overline{Tl}\mid_Y\rightarrow \overline{N}\rightarrow 0
\end{displaymath} where $N$ is endowed with the quotient metric, we
shall write $\widetilde{{\rm
Td}}_g(\overline{Tf},\overline{Tl}\mid_Y)$ for $\widetilde{{\rm
Td}}_g(\overline{\mathcal{N}})$ the equivariant Todd secondary
characteristic class associated to $\overline{\mathcal{N}}$. It
satisfies the following differential equation
\begin{displaymath} {\rm dd}^c\widetilde{{\rm
Td}}_g(\overline{Tf},\overline{Tl}\mid_Y)={\rm
Td}_g(Tf,h^{Tf}){\rm Td}_g(\overline{N})-{\rm
Td}_g(Tl\mid_Y,h^{Tl}).
\end{displaymath}

For simplicity, we shall suppose that in the resolution $\xi.$,
$\xi_j$ are all $l-$acyclic and moreover $\eta$ is $f-$acyclic. By
an easy argument of long exact sequence, we have the following
exact sequence
\begin{displaymath} \Xi:\quad 0\rightarrow
l_*(\xi_m)\rightarrow l_*(\xi_{m-1})\rightarrow\ldots\rightarrow
l_*(\xi_0)\rightarrow f_*\eta\rightarrow 0.
\end{displaymath} By
the semi-continuity theorem, all the elements in the exact
sequence above are vector bundles. In this case, we recall the
definition of the $L^2$-metrics on direct images precisely as
follows. We just take $f_*h^\eta$ as an example. Note that the
semi-continuity theorem implies that the natural map
\begin{displaymath}
(R^0f_*\eta)_s\rightarrow H^0(Y_s,\eta\mid_{Y_s})
\end{displaymath} is an isomorphism for every point $s\in S$ where
$Y_s$ stands for the fibre over $s$. We may endow
$H^0(Y_s,\eta\mid_{Y_s})$ with a $L^2$-metric given by the formula
\begin{displaymath}
<u,v>_{L^2}:=\frac{1}{(2\pi)^{d_s}}\int_{Y_s}h^\eta(u,v)\frac{{\omega^Y}^{d_s}}{d_s!}
\end{displaymath} where $d_s$ is the complex dimension of the fibre
$Y_s$. It can be shown that these metrics depend on $s$ in a
$C^\infty$ manner (cf. \cite{BGV}, p.278) and hence define a
hermitian metric on $f_*\eta$. We shall denote it by $f_*h^\eta$.

In order to understand the statement of Bismut-Ma's immersion
formula, we still have to recall an important concept defined by
J.-M. Bismut, the equivariant $R$-genus. Let $W$ be a
$G$-equivariant complex manifold, and let $\overline{E}$ be an
equivariant hermitian vector bundle on $W$. For $\zeta\in S^1$ and
$s>1$ consider the zeta function
\begin{displaymath}
L(\zeta,s)=\sum_{k=1}^\infty\frac{\zeta^k}{k^s}
\end{displaymath}
and its meromorphic continuation to the whole complex plane.
Define the formal power series in $x$
\begin{displaymath}
\widetilde{R}(\zeta,x):=\sum_{n=0}^\infty\big(\frac{\partial
L}{\partial
s}(\zeta,-n)+L(\zeta,-n)\sum_{j=1}^n\frac{1}{2j}\big)\frac{x^n}{n!}.
\end{displaymath}

\begin{defn}\label{331}
The Bismut equivariant $R$-genus of an equivariant hermitian
vector bundle $\overline{E}$ with
$\overline{E}\mid_{X_g}=\sum_\zeta\overline{E}_\zeta$ is defined
as \begin{displaymath} R_g(\overline{E}):=\sum_{\zeta\in
S^1}\big({\rm
Tr}\widetilde{R}(\zeta,-\frac{\Omega^{\overline{E}_\zeta}}{2\pi
i})-{\rm
Tr}\widetilde{R}(1/\zeta,\frac{\Omega^{\overline{E}_\zeta}}{2\pi
i})\big) \end{displaymath} where $\Omega^{\overline{E}_\zeta}$ is
the curvature form associated to $\overline{E}_\zeta$. Actually,
the class of $R_g(\overline{E})$ in $\widetilde{A}(X_g)$ is
independent of the metric and we just write $R_g(E)$ for it.
Furthermore, the class $R_g(\cdot)$ is additive.
\end{defn}

\begin{thm}\label{332}(Immersion formula)
Let notations and assumptions be as above. Then the equality
\begin{eqnarray*}
\sum_{i=0}^m(-1)^iT_g(\omega^X,h^{\xi_i})-T_g(\omega^Y,h^\eta)+\widetilde{{\rm
ch}}_g(\Xi,h^{L^2})&=&\int_{X_g/S}{\rm
Td}_g(Tl,h^{Tl})T_g(\overline{\xi}.)\\
+\int_{Y_g/S}\frac{\widetilde{{\rm
Td}}_g(\overline{Tf},\overline{Tl}\mid_Y)}{{\rm
Td}_g(\overline{N})}{\rm ch}_g(\overline{\eta})&+&\int_{X_g/S}{\rm
Td}_g(Tl)R_g(Tl)\sum_{i=0}^m(-1)^i{\rm
ch}_g(\xi_i)\\
&-&\int_{Y_g/S}{\rm Td}_g(Tf)R_g(Tf){\rm ch}_g(\eta)
\end{eqnarray*} holds in $\widetilde{A}(S)$.
\end{thm}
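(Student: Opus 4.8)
The plan is to deduce the formula directly from the general immersion formula of Bismut and Ma in \cite{BM}; the work here is to specialize their theorem to the present geometric situation and to reconcile normalizations. First I would recall the setup of \cite{BM}: one is given a commutative triangle of equivariant K\"ahler fibrations $f=l\circ i$ with $i$ a closed immersion, a resolution $\overline{\xi}.$ of $i_*\overline{\eta}$ whose metrics satisfy Bismut assumption (A), and one obtains an identity in $\widetilde{A}(S)$ (here $G$ acts trivially on $S$, so $S_g=S$) comparing $\sum_i(-1)^iT_g(\omega^X,h^{\xi_i})$, $T_g(\omega^Y,h^\eta)$ and the Bott--Chern term $\widetilde{{\rm ch}}_g(\Xi,h^{L^2})$ with three ``geometric'' fibre integrals: one over $X_g/S$ of ${\rm Td}_g(Tl,h^{Tl})$ against the equivariant singular current $T_g(\overline{\xi}.)$ of Definition~\ref{324}, one over $Y_g/S$ of $\widetilde{{\rm Td}}_g(\overline{Tf},\overline{Tl}\mid_Y)/{\rm Td}_g(\overline{N})$ against ${\rm ch}_g(\overline{\eta})$, and the two equivariant $R$-genus correction terms $\int_{X_g/S}{\rm Td}_g(Tl)R_g(Tl)\sum_i(-1)^i{\rm ch}_g(\xi_i)$ and $-\int_{Y_g/S}{\rm Td}_g(Tf)R_g(Tf){\rm ch}_g(\eta)$. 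Matching this shape with the displayed equality is the whole content.

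Second, I would check that the hypotheses imposed in this subsection are exactly those of \cite{BM}, or are harmless simplifications. The assumption that $\omega^Y$ is the pull-back of $\omega^X$ guarantees that the vertical metric on $Y$ is induced from that on $X$, which is what is needed for the secondary class $\widetilde{{\rm Td}}_g(\overline{Tf},\overline{Tl}\mid_Y)$ associated to $\overline{\mathcal{N}}$ to be the relevant one. The assumption that the $\xi_j$ are $l$-acyclic and $\eta$ is $f$-acyclic is what makes $\Xi$ a genuine exact sequence of vector bundles and allows one to work with the $L^2$-metrics $h^{L^2}$ recalled above rather than with metrics on the full higher direct images; in the general formula these acyclicity hypotheses make all higher-degree contributions collapse. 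Finally, Bismut assumption (A) on the metrics of $\overline{\xi}.$, whose existence is ensured by Proposition~\ref{322}, is precisely the hypothesis under which the singular current $T_g(\overline{\xi}.)$ of Definition~\ref{324} and the differential equation of Theorem~\ref{325} take their stated form. Under these hypotheses the general result of \cite{BM} reduces term by term to the equality claimed.

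Third comes the bookkeeping: one must verify that all normalizations agree between \cite{BM} and the conventions fixed in Sections~4.1--4.2. This concerns the operator $\Phi$ (dividing the degree-$2p$ component by $(2\pi i)^p$), the $-\sqrt{-1}$ rescaling of $N^\vee$ built into Bismut assumption (A) and hence into $T_g(\overline{\xi}.)$, the current-valued zeta function of Lemma~\ref{323} with its $({\rm Td}_g^{-1})'(\overline{N})$ term, and the two-term $\zeta$ versus $1/\zeta$ convention for the equivariant $R$-genus of Definition~\ref{331}. Since all of these were set up deliberately to be compatible with Bismut's and Bismut--Ma's conventions, this is routine but unavoidable. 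The main obstacle is exactly this last step: the papers \cite{BK}, \cite{Bi}, \cite{Ma1}, \cite{BM} use slightly different sign and $2\pi i$ conventions, and one has to be careful that the interplay of the $\Phi$-twist, the Hodge-theoretic identification of $H$ with harmonic elements, and the $\sqrt{-1}$-rescaling produces no spurious powers of $2\pi i$ or sign errors in front of the $\widetilde{{\rm Td}}_g(\overline{Tf},\overline{Tl}\mid_Y)/{\rm Td}_g(\overline{N})$ and $R$-genus terms. Once this is checked, the theorem is precisely \cite{BM}.
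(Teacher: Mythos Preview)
Your proposal is correct and takes essentially the same approach as the paper: the paper's proof is a single sentence citing \cite{BM}, Theorems 0.1 and 0.2, so the theorem is quoted rather than reproved. Your additional discussion of matching hypotheses and normalizations is more detailed than what the paper provides, but it is in the same spirit and not at odds with it.
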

\begin{proof}
This is the combination of \cite{BM}, Theorem 0.1 and 0.2, the
main theorems in that paper.
\end{proof}

\section{Arithmetic concentration theorem}
It is the aim of this section to prove an arithmetic concentration
theorem in Arakelov geometry. Let $X$ be a $\mu_n$-equivariant
arithmetic variety, we consider a special closed immersion $i:
X_{\mu_n}\hookrightarrow X$ where $X_{\mu_n}$ is the fixed point
subscheme of $X$. We first claim that the morphism $i$ induces a
well-defined group homomorphism $i_*$ between equivariant
arithmetic $K_0$-groups as in the algebraic case. To construct
$i_*$, some analytic datum, which is the equivariant Bott-Chern
singular current, should be involved. Precisely speaking, let
$\overline{\eta}$ be a $\mu_n$-equivariant hermitian vector bundle
on $X_{\mu_n}$ and let $\overline{\xi}.$ be a bounded complex of
$\mu_n$-equivariant hermitian vector bundles which provides a
resolution of $i_*\overline{\eta}$ on $X$, then we may have an
equivariant Bott-Chern singular current $T_g(\overline{\xi}.)\in
\widetilde{\mathcal{U}}(X_{\mu_n})$. Note that the $0$-degree part
of the normal bundle $N:=N_{X/{X_{\mu_n}}}$ vanishes (cf.
\cite{KR1}, Prop. 2.12) so that the wave front set of
$T_g(\overline{\xi}.)$ is the empty set, and hence we know that
the current $T_g(\overline{\xi}.)$ is actually smooth. This fact
means that the following definition does make sense.

\begin{defn}\label{401}
Let notations and assumptions be as above. The embedding morphism
\begin{displaymath}
i_*:\quad \widehat{K_0}(X_{\mu_n},\mu_n)\rightarrow
\widehat{K_0}(X,\mu_n)
\end{displaymath} is defined as follows.

(i). For every $\mu_n$-equivariant hermitian vector bundle
$\overline{\eta}$ on $X_{\mu_n}$, suppose that $\overline{\xi}.$
is a resolution of $i_*\overline{\eta}$ on $X$ whose metrics
satisfy Bismut assumption (A),
$i_*[\overline{\eta}]=\sum_k(-1)^k[\overline{\xi}_k]+T_g(\overline{\xi.})$.

(ii). For every $\alpha\in \widetilde{A}(X_{\mu_n})$,
$i_*\alpha=\alpha{\rm Td}_g^{-1}(\overline{N})$.
\end{defn}

\begin{thm}\label{402}
The embedding morphism $i_*$ is a well-defined group homomorphism.
\end{thm}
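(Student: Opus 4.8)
The plan is to verify that the prescription in Definition~\ref{401} respects the three generating relations of $\widehat{K_0}(X_{\mu_n},\mu_n)$: the exact-sequence relation (i) in Definition~\ref{203} for equivariant hermitian vector bundles on $X_{\mu_n}$, the additivity relation (ii) for elements of $\widetilde{A}(X_{\mu_n})$, and — implicitly — the independence of the image of $i_*[\overline{\eta}]$ from the chosen resolution $\overline{\xi}.$ and from the chosen metrics satisfying Bismut assumption (A). So I would organize the argument in three steps, preceded by an existence remark: by the facts (i)--(iv) recalled after Definition~\ref{203} together with Proposition~\ref{322}, every $\mu_n$-equivariant hermitian vector bundle on $X_{\mu_n}$ admits a bounded resolution by $\mu_n$-equivariant hermitian vector bundles on $X$ whose metrics satisfy Bismut assumption (A), so the formula in (i) can always be evaluated.

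\textbf{Step 1: independence of the resolution and the metrics.} Given two resolutions $\overline{\xi}.$ and $\overline{\xi}'.$ of $i_*\overline{\eta}$, both satisfying assumption (A), I would first reduce to the case where one dominates the other, using fact (iv), and then apply Theorem~\ref{326} to the double complex relating them (with $\overline{\chi}$ the trivial one-term complex $\overline{\eta}$ in degree $0$). Theorem~\ref{326} gives precisely the identity in $\widetilde{\mathcal{U}}(X_{\mu_n})$ among the alternating sums of singular currents and the Bott-Chern classes of the columns $\overline{\varepsilon}_k$; since the normal bundle $N=N_{X/X_{\mu_n}}$ has vanishing degree-$0$ part, all these currents are smooth and the identity lives in $\widetilde{A}(X_{\mu_n})$. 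Comparing with relation (i) of Definition~\ref{203} applied to each exact sequence $\overline{\varepsilon}_k$, one sees that $\sum_k(-1)^k[\overline{\xi}_k]+T_g(\overline{\xi}.)$ and $\sum_k(-1)^k[\overline{\xi}'_k]+T_g(\overline{\xi}'.)$ coincide in $\widehat{K_0}(X,\mu_n)$. Metric independence for a fixed underlying resolution is the special case where the two columns differ only by a change of metric, handled the same way (the Bott-Chern class $\widetilde{{\rm ch}}_g$ of a metric change is exactly the correction term appearing in relation (i)).

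\textbf{Step 2: the exact-sequence relation.} For an exact sequence $\overline{\chi}: 0\to\overline{\eta}_n\to\cdots\to\overline{\eta}_0\to 0$ on $X_{\mu_n}$, I would choose, using fact (iii), a double complex of resolutions as in Theorem~\ref{326}, again arranging all metrics to satisfy assumption (A) via Proposition~\ref{322}. Applying Theorem~\ref{326} and then re-expressing every term via the defining relations of $\widehat{K_0}$ gives
\begin{displaymath}
\sum_{j=0}^n(-1)^j i_*[\overline{\eta}_j]=\sum_{j=0}^n(-1)^j\Big(\sum_k(-1)^k[\overline{\xi}_{j,k}]+T_g(\overline{\xi}_{j,\cdot})\Big),
\end{displaymath}
and the right-hand side collapses, using relation (i) on the rows and Theorem~\ref{326}, to ${i_g}_*\big(\widetilde{{\rm ch}}_g(\overline{\chi})/{\rm Td}_g(\overline{N})\big)$, which is exactly $i_*$ applied to the class $a(\widetilde{{\rm ch}}_g(\overline{\chi}))\in\widehat{K_0}(X_{\mu_n},\mu_n)$ by part (ii) of Definition~\ref{401}. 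This is the content of relation (i) of Definition~\ref{203} for $i_*$. Relation (ii), additivity on $\widetilde{A}(X_{\mu_n})$, is immediate from part (ii) of Definition~\ref{401} since multiplication by ${\rm Td}_g^{-1}(\overline{N})$ is additive.

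\textbf{Main obstacle.} The substantive point, and the one I expect to absorb most of the work, is Step 1 — showing the image is genuinely independent of the resolution and of the choice of metrics satisfying assumption (A). Existence of resolutions with property (A) and the combinatorics of dominating double complexes are routine given facts (i)--(iv) and Proposition~\ref{322}, but one must be careful that Theorem~\ref{326} is being applied to a legitimate double complex (all rows exact, all entries hermitian, all metrics satisfying (A)) and that the smoothness of $T_g(\overline{\xi}.)$ — which rests on the vanishing of $N^0$ via \cite{KR1}, Prop.~2.12 — is invoked so that every identity really takes place in $\widetilde{A}(X_{\mu_n})$ rather than merely in $\widetilde{\mathcal{U}}(X_{\mu_n})$, since the generator relation in $\widehat{K_0}$ only involves smooth forms. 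Once these compatibilities are in place, the remaining verifications are bookkeeping with the differential equations of Theorems~\ref{325} and~\ref{326}.
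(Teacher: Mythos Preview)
Your proposal is correct and follows essentially the same approach as the paper: both arguments hinge on Theorem~\ref{326} applied to a double complex of resolutions, together with the domination property (iv) for resolutions and Proposition~\ref{322} for assumption (A) metrics. The only organizational difference is that the paper first checks compatibility with a short exact sequence and then deduces independence of the resolution as a special case (two resolutions dominated by a third, with $\overline{\chi}$ trivial), whereas you treat independence first and then the exact-sequence relation; both orderings work and neither introduces any new idea.
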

\begin{proof}
We have to prove that our definition for $i_*$ is well-defined and
it is compatible with the two generating relations of arithmetic
$K_0$-group. Indeed, assume that we are given a short exact
sequence \begin{displaymath} \overline{\chi}:\quad 0\rightarrow
\overline{\eta}'\rightarrow \overline{\eta}\rightarrow
\overline{\eta}''\rightarrow 0
\end{displaymath} of equivariant
hermitian vector bundles on $X_{\mu_n}$. As in Theorem~\ref{326},
let $\overline{\xi}.'$, $\overline{\xi}.$ and $\overline{\xi}.''$
be resolutions on $X$ of $i_*\overline{\eta}'$,
$i_*\overline{\eta}$ and $i_*\overline{\eta}''$ which fit the
following double complex
\begin{displaymath}
\xymatrix{0 \ar[r] & \overline{\xi}.' \ar[r] \ar[d] &
\overline{\xi}. \ar[r] \ar[d] &
\overline{\xi}.'' \ar[r] \ar[d] & 0 \\
0 \ar[r] & i_*\overline{\eta}' \ar[r] & i_*\overline{\eta} \ar[r]
& i_*\overline{\eta}'' \ar[r] & 0}
\end{displaymath} such that all
rows are exact. For each $k$, we write $\overline{\varepsilon}_k$
for the exact sequence \begin{displaymath} 0\rightarrow
\overline{\xi}'_k\rightarrow \overline{\xi}_k\rightarrow
\overline{\xi}''_k\rightarrow 0. \end{displaymath} Then
Theorem~\ref{326} implies that the equality
\begin{displaymath}
T_g(\overline{\xi}.')-T_g(\overline{\xi}.)+T_g(\overline{\xi}.'')=\frac{\widetilde{{\rm
ch}}_g(\overline{\chi})}{{\rm
Td}_g(\overline{N})}-\sum_k(-1)^k\widetilde{{\rm
ch}}_g(\overline{\varepsilon}_k) \end{displaymath} holds in
$\widetilde{A}(X_{\mu_n})$. This means
$i_*[\overline{\eta}']-i_*[\overline{\eta}]+i_*[\overline{\eta}'']=0$
in the group $\widehat{K_0}(X,\mu_n)$ according to its generating
relations. Note that if $\overline{\xi.}$ is an exact sequence
then $T_g(\overline{\xi.})$ is equal to $-\widetilde{{\rm
ch}}_g(\overline{\xi.})$, so we have $i_*[0]=0$. Moreover, any two
resolutions of $i_*\overline{\eta}$ are dominated by a third one,
then our arguments above also show that $i_*[\overline{\eta}]$ is
independent of the choice of resolution. Therefore, the embedding
morphism $i_*$ is well-defined and it is compatible with the first
generating relation of arithmetic $K_0$-group. On the other hand,
the compatibility with the second relation is trivial. So we are
done.
\end{proof}

\begin{lem}\label{pf}(Projection formula)
For any elements $x\in \widehat{K_0}(X,\mu_n)$ and $y\in
\widehat{K_0}(X_{\mu_n},\mu_n)$, the equality $i_*(i^*x\cdot
y)=x\cdot i_*y$ holds in $\widehat{K_0}(X,\mu_n)$.
\end{lem}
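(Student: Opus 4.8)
The plan is to verify the projection formula on generators of $\widehat{K_0}(X,\mu_n)$ and $\widehat{K_0}(X_{\mu_n},\mu_n)$, using the corresponding algebraic projection formula together with the compatibility of the secondary analytic data. Recall that $\widehat{K_0}(X,\mu_n)$ is generated by equivariant hermitian vector bundles $\overline{E}$ and by forms $\alpha\in\widetilde{A}(X_{\mu_n})$, and likewise for $\widehat{K_0}(X_{\mu_n},\mu_n)$; by bilinearity it suffices to treat the four cases in which $x$ and $y$ each range over these two kinds of generators. The main computational input is Theorem~\ref{326} (the behaviour of the equivariant Bott-Chern singular currents in a double complex), exactly as in the proof of Theorem~\ref{402}.

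First I would treat the case $x=\overline{E}$ a hermitian bundle on $X$ and $y=\overline{\eta}$ a hermitian bundle on $X_{\mu_n}$. Choose a resolution $\overline{\xi}.$ of $i_*\overline{\eta}$ on $X$ satisfying Bismut assumption (A); then $\overline{E}\otimes\overline{\xi}.$ is a resolution of $i_*(i^*\overline{E}\otimes\overline{\eta})=\overline{E}\otimes i_*\overline{\eta}$ (here one uses the algebraic projection formula on the level of sheaves), and one must check that the induced metrics still satisfy assumption (A) — this is immediate from the definition of assumption (A) since tensoring with $\overline{E}$ tensors both sides of the canonical identification $(\pi^*H,\partial_zv)\cong(\pi^*(\wedge N^\vee\otimes\eta),\sqrt{-1}i_z)$ by the pulled-back bundle $\pi^*E$ and preserves isometries. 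Then by Definition~\ref{401}(i),
\begin{displaymath}
i_*(i^*\overline{E}\cdot\overline{\eta})=\sum_k(-1)^k[\overline{E}\otimes\overline{\xi}_k]+T_g(\overline{E\otimes\xi}.),
\end{displaymath}
and I would invoke the multiplicativity of the singular current, $T_g(\overline{E\otimes\xi}.)={\rm ch}_g(\overline{E})\wedge T_g(\overline{\xi}.)$ (this follows from the construction of $T_g$ via the current-valued zeta function, since tensoring the superconnection $C_u$ with the flat-in-$u$ factor $\nabla^E$ multiplies the supertrace by ${\rm ch}_g(\overline{E})$, and the $\delta_{Y_g}$-term transforms the same way by restriction — alternatively this is recorded in \cite{Bi}). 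Combining this with ${\rm ch}_g(\overline{E}\otimes\overline{\xi}_k)={\rm ch}_g(\overline{E}){\rm ch}_g(\overline{\xi}_k)$ and the product rule $\overline{E}\cdot\beta={\rm ch}_g(\overline{E})\wedge\beta$ in $\widehat{K_0}(X,\mu_n)$, one recovers $\overline{E}\cdot i_*\overline{\eta}$.

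The remaining three cases are lighter. If $x=\overline{E}$ and $y=\alpha\in\widetilde{A}(X_{\mu_n})$, then $i^*\overline{E}\cdot\alpha={\rm ch}_g(i^*\overline{E})\wedge\alpha={\rm ch}_g(\overline{E})\mid_{X_{\mu_n}}\wedge\,\alpha$, so by Definition~\ref{401}(ii), $i_*(i^*\overline{E}\cdot\alpha)={\rm ch}_g(\overline{E})\wedge\alpha\,{\rm Td}_g^{-1}(\overline{N})=\overline{E}\cdot(\alpha\,{\rm Td}_g^{-1}(\overline{N}))=\overline{E}\cdot i_*\alpha$. If $x=a(\beta)$ for $\beta\in\widetilde{A}(X_{\mu_n})$, one uses $i^*a(\beta)=a({\rm dd}^c\beta)$ in $\widehat{K_0}(X_{\mu_n},\mu_n)$ together with the product rules $\beta\cdot(-)={\rm dd}^c\beta\wedge(-)$ and the compatibility ${\rm dd}^c(\beta\,{\rm Td}_g^{-1}(\overline{N}))\wedge(-)$; all such manipulations are routine once one recalls ${\rm dd}^c$ is multiplicative on closed smooth forms and that $\beta$ here is smooth, and the case $x=a(\beta),\,y=\alpha$ is handled identically. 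I expect the main obstacle to be the first case — specifically, isolating cleanly the two facts that (a) tensoring a resolution by $\overline{E}$ again satisfies Bismut assumption (A), and (b) the singular current is ${\rm ch}_g(\overline{E})$-linear, $T_g(\overline{E\otimes\xi}.)={\rm ch}_g(\overline{E})T_g(\overline{\xi}.)$; once these are in hand the identity falls out of the generating relations of $\widehat{K_0}$. Since these two facts are standard properties of Bismut's construction (and the first is a trivial consequence of Proposition~\ref{322} applied after tensoring), I would state them as small claims with one-line justifications rather than redoing the zeta-function analysis.
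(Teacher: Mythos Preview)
Your approach is the same as the paper's: both verify the identity on pairs of generators (bundle/bundle, bundle/form, form/bundle, form/form), and in the key bundle/bundle case both tensor the chosen resolution by $\overline{E}$ and use $T_g(\overline{E}\otimes\overline{\xi}.)={\rm ch}_g(\overline{E})\,T_g(\overline{\xi}.)$. You are more explicit than the paper in isolating the two technical ingredients (preservation of Bismut's assumption~(A) under tensoring, and ${\rm ch}_g$-linearity of $T_g$); the paper simply records the resulting formula ``by definition''.

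One slip to correct: in the case $x=a(\beta)$ you write $i^*a(\beta)=a({\rm dd}^c\beta)$, which is wrong. Recall that the analytic generators of $\widehat{K_0}(X,\mu_n)$ are already classes of forms on $X_{\mu_n}(\C)$, and since $(X_{\mu_n})_{\mu_n}=X_{\mu_n}$, the restriction $i^*$ acts as the identity on them: $i^*a(\beta)=a(\beta)$. This also means your case ``$x=a(\beta)$, $y=\overline{\eta}$'' is not as trivial as suggested. The paper handles it by computing
\[
i_*(i^*\alpha\cdot\overline{F})=i_*\bigl({\rm ch}_g(\overline{F})\alpha\bigr)=\alpha\,{\rm ch}_g(\overline{F})\,{\rm Td}_g^{-1}(\overline{N})
\]
and then invoking the differential equation for $T_g(\overline{\xi}.)$ (Theorem~\ref{325}) to rewrite the right-hand side as $\alpha\bigl({\rm dd}^cT_g(\overline{\xi}.)+\sum_k(-1)^k{\rm ch}_g(\overline{\xi}_k)\bigr)$, which is exactly $\alpha\cdot i_*\overline{F}$ by the product rules in $\widehat{K_0}(X,\mu_n)$. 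Once you replace $i^*a(\beta)=a({\rm dd}^c\beta)$ by $i^*a(\beta)=a(\beta)$ and insert this use of Theorem~\ref{325}, your argument agrees with the paper's.
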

\begin{proof}
Assume that $x=\overline{E}$ and $y=\overline{F}$ are equivariant
hermitian vector bundles. Let $\overline{\xi.}$ be a resolution of
$i_*\overline{F}$ on $X$, then $\overline{E}\otimes
\overline{\xi.}$ provides a resolution of
$i_*(i^*\overline{E}\otimes \overline{F})$. By definition we have
\begin{displaymath}
i_*(i^*x\cdot y)=\sum(-1)^k[\overline{\xi} _k\otimes
\overline{E}]+{\rm ch}_g(\overline{E})T_g(\overline{\xi.})
\end{displaymath} which is exactly $x\cdot i_*y$. Assume that
$x=\alpha$ is represented by some smooth form and $y=\overline{F}$
is an equivariant hermitian vector bundle. Again let
$\overline{\xi.}$ be a resolution of $i_*\overline{F}$ on $X$,
then \begin{displaymath} i_*(i^*x\cdot y)=\alpha{\rm
Td}_g^{-1}(\overline{N}){\rm ch}_g(\overline{F})=\alpha[{\rm
dd}^cT_g(\overline{\xi.})+\sum(-1)^k{\rm ch}_g(\overline{\xi}_k)]
\end{displaymath} which is exactly $x\cdot i_*y$. Now assume that
$x=\overline{E}$ is an equivariant hermitian vector bundle and
$y=\alpha$ is represented by some smooth form, then
\begin{displaymath}
i_*(i^*x\cdot y)=i_*({\rm ch}_g(\overline{E})\alpha)={\rm
ch}_g(\overline{E})\alpha{\rm Td}_g^{-1}(\overline{N})
\end{displaymath} which is exactly $x\cdot i_*y$. Finally, if $x$
and $y$ are both represented by smooth forms then $i_*(i^*x\cdot
y)$ is trivially equal to $x\cdot i_*y$ by definition. Note that
$i_*$ and $i^*$ are group homomorphisms, so we may conclude the
projection formula from its correctness on generators. This
completes the proof.
\end{proof}

\begin{rem}\label{pfrem}
Lemma~\ref{pf} implies that $i_*$ is even a homomorphism of
$R(\mu_n)$-modules so that it induces a homomorphism between
arithmetic $K_0$-groups after taking localization, and hence there
exists a corresponding projection formula after taking
localization.
\end{rem}

With Remark~\ref{pfrem}, the arithmetic concentration theorem can
be described as follows.

\begin{thm}\label{403}
The embedding morphism $i_*:
\widehat{K_0}(X_{\mu_n},\mu_n)_{\rho}\rightarrow
\widehat{K_0}(X,\mu_n)_{\rho}$ is an isomorphism and the inverse
morphism of $i_*$ is given by
$\lambda_{-1}^{-1}(\overline{N}^\vee)\cdot i^*$ where $N$ again
stands for the normal bundle $N_{X/{X_{\mu_n}}}$.
\end{thm}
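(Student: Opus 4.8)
The plan is to reduce the arithmetic statement to Thomason's algebraic concentration theorem (Theorem~\ref{concentration}) together with the auxiliary lemma on invertibility of $\lambda_{-1}$ (Lemma~\ref{205}), exploiting the forgetful exact sequence of Remark~\ref{204}. First I would observe that by Lemma~\ref{205} the element $\lambda_{-1}(\overline{N}^\vee)$ is invertible in $\widehat{K_0}(X_{\mu_n},\mu_n)_\rho$, since the degree-$0$ part of $N=N_{X/X_{\mu_n}}$ vanishes by \cite{KR1}, Prop.~2.12; hence $\lambda_{-1}^{-1}(\overline{N}^\vee)\cdot i^*$ is a well-defined map $\widehat{K_0}(X,\mu_n)_\rho\to\widehat{K_0}(X_{\mu_n},\mu_n)_\rho$. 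The strategy is then to show this map is a two-sided inverse to $i_*$. One composition, $\lambda_{-1}^{-1}(\overline{N}^\vee)\cdot i^*\circ i_*=\mathrm{id}$, should follow from a self-intersection computation: for a generator $\overline{\eta}$ on $X_{\mu_n}$ one needs $i^*i_*[\overline{\eta}]=\lambda_{-1}(\overline{N}^\vee)\cdot[\overline{\eta}]$ in the localized group, where the archimedean contribution of $T_g(\overline{\xi}.)$ is controlled by the differential equation of Theorem~\ref{325}; combined with the projection formula (Lemma~\ref{pf}, Remark~\ref{pfrem}) this handles the vector-bundle generators, and the smooth-form generators are immediate from Definition~\ref{401}(ii) since $i^*i_*\alpha=\alpha\,\mathrm{Td}_g^{-1}(\overline{N})\,\mathrm{ch}_g(\lambda_{-1}(\overline{N}^\vee))$ and $\mathrm{ch}_g(\lambda_{-1}(\overline{N}^\vee))=\mathrm{Td}_g(\overline{N})^{-1}\cdot\mathrm{Td}_g(\overline{N})$... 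I must be careful here, and will instead reduce the form-part of both compositions to the known identity $\mathrm{ch}_g(\lambda_{-1}(\overline{N}^\vee))=c_{\mathrm{top}}(\overline{N})/\mathrm{Td}_g(\overline{N})$ only when needed, since on $X_{\mu_n}$ the excess term is what makes $\lambda_{-1}$ invertible.

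For the other composition, $i_*\circ(\lambda_{-1}^{-1}(\overline{N}^\vee)\cdot i^*)=\mathrm{id}$ on $\widehat{K_0}(X,\mu_n)_\rho$, I would argue as follows. By the projection formula one has $i_*(\lambda_{-1}^{-1}(\overline{N}^\vee)\cdot i^*x)=x\cdot i_*(\lambda_{-1}^{-1}(\overline{N}^\vee))$, so it suffices to prove $i_*(\lambda_{-1}^{-1}(\overline{N}^\vee))=1$ in $\widehat{K_0}(X,\mu_n)_\rho$. Equivalently, after multiplying by the invertible element, one must show $i_*(1)=\lambda_{-1}(\overline{N}^\vee)$ in... but $i_*(1)$ is not directly an element of the target of multiplication; the correct formulation is that $i_*$ and $\lambda_{-1}^{-1}(\overline{N}^\vee)\cdot i^*$ are mutually inverse, and having established one composition is the identity, the second follows once I know one of the two maps is surjective (or injective). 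So the crux is to prove surjectivity (or injectivity) of $i_*$ on the localized groups. Here I would use the forgetful exact sequence: from Remark~\ref{204} there is a commutative ladder relating $\widetilde{A}(X_{\mu_n})\to\widehat{K_0}(\cdot,\mu_n)\to K_0(\cdot,\mu_n)\to 0$ for $X_{\mu_n}$ and for $X$, with vertical maps $a$, $i_*$, and the algebraic $i_*$. Localizing at $\rho$ and applying the five-lemma-type diagram chase: Thomason's theorem gives that the right-hand algebraic $i_*$ is an isomorphism; the left-hand map $\widetilde{A}(X_{\mu_n})\to\widetilde{A}(X_{\mu_n})$ induced on the analytic part by $i^*i_*=\mathrm{Td}_g^{-1}(\overline{N})\cdot(\text{restriction})$ is multiplication by an invertible class after localization. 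Chasing the diagram then yields that $i_*$ is an isomorphism.

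The main obstacle I expect is the bookkeeping in the self-intersection identity $i^*i_*[\overline{\eta}]=\lambda_{-1}(\overline{N}^\vee)\cdot[\overline{\eta}]$ at the level of the secondary analytic objects: one must check that the pullback $i^*T_g(\overline{\xi}.)$ of the equivariant Bott-Chern singular current, together with the $\mathrm{Td}_g^{-1}(\overline{N})$ twist in Definition~\ref{401}(ii) and the Bott-Chern classes $\widetilde{\mathrm{ch}}_g$ appearing when one re-expresses $\sum_k(-1)^k i^*\overline{\xi}_k$ via a Koszul-type resolution of $\lambda_{-1}(\overline{N}^\vee)\otimes\overline{\eta}$, assemble into exactly the class $\lambda_{-1}(\overline{N}^\vee)\cdot[\overline{\eta}]$ with no leftover form terms; the differential equation of Theorem~\ref{325} pins down $\mathrm{dd}^c$ of the discrepancy but one needs a separate argument (e.g. compatibility with the algebraic identity under the forgetful map $\pi$, so that the discrepancy lies in the image of $a$, combined with an explicit Bott-Chern computation, or an appeal to Theorem~\ref{326} applied to the Koszul resolution) to conclude the discrepancy vanishes. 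Once this self-intersection formula is in hand, the rest is the diagram chase described above and is routine.
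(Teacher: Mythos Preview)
Your overall architecture matches the paper's proof: reduce to Thomason on the algebraic level, use the forgetful exact sequence of Remark~\ref{204} to bootstrap surjectivity of $i_*$ to the arithmetic group, and then verify that $\lambda_{-1}^{-1}(\overline{N}^\vee)\cdot i^*$ is a one-sided inverse (which, together with surjectivity, forces it to be the two-sided inverse). Your diagram chase for surjectivity is exactly what the paper does, and your treatment of form generators is correct once you use that $N_{\mu_n}=0$ forces $\mathrm{ch}_g(\lambda_{-1}(\overline{N}^\vee))=\mathrm{Td}_g^{-1}(\overline{N})$.

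The genuine gap is the step you yourself flag as the ``main obstacle'': the self-intersection identity
\[
\lambda_{-1}(\overline{N}^\vee)\cdot\overline{\eta}-\sum_j(-1)^j i^*(\overline{\xi}_j)=T_g(\overline{\xi}.)
\]
in $\widehat{K_0}(X_{\mu_n},\mu_n)$. None of your proposed workarounds suffices. The differential equation of Theorem~\ref{325} only determines $T_g(\overline{\xi}.)$ up to $\partial$- and $\overline{\partial}$-exact terms, which is precisely the ambiguity in $\widetilde{A}(X_{\mu_n})$ that you need to kill. Compatibility with the forgetful map $\pi$ tells you the discrepancy lies in the image of $a$, but you already knew that; it does not make it vanish. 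And Theorem~\ref{326} compares singular currents for two resolutions of a complex of sheaves on $Y$, not a single resolution restricted to the fixed locus; it does not produce the Koszul comparison you want.

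What the paper actually does here (its Lemma~\ref{407}) is invoke an axiomatic uniqueness theorem of X.~Ma for equivariant secondary classes of \emph{standard complexes} (bounded complexes with locally free homology), Theorems~\ref{405}--\ref{406}. One shows that both the ``naive'' alternating sum $C(\overline{\xi}.,h^H_{\mathrm{ind}})$ of Bott--Chern classes obtained by splitting $\overline{\xi}.\!\mid_{X_{\mu_n}}$ into short exact sequences, and a superconnection-zeta class $\zeta(\overline{\xi}.,h^H_{\mathrm{ind}})$, satisfy the same three axioms (correct $\mathrm{dd}^c$, functoriality under pullback, vanishing on homologically split complexes). Uniqueness forces them to coincide. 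Then a direct comparison of the defining zeta integrals, using Bismut assumption~(A) and the computation \cite{Bi}, (6.26), identifies $\zeta(\overline{\xi}.,h^H_{\mathrm{ind}})$ with the singular current $T_g(\overline{\xi}.)$ in this situation where the wave-front set is empty. The identity you need then drops out. Without this uniqueness machinery (or an equivalent direct analytic argument), the self-intersection step cannot be closed, and your proof remains incomplete at exactly the point you identified.
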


Before giving a complete proof of this concentration theorem, we
need to make some purely analytic preliminaries.

\begin{defn}\label{404}
Let $X$ be a compact complex manifold, and let $\overline{\xi}.$
be a bounded complex of hermitian vector bundles on $X$ such that
the homology sheaves are all locally free i.e. vector bundles.
Suppose that the homology sheaves are endowed with some hermitian
metrics $h^H$. Such a complex will be called a standard complex.
Endow the kernel and the image of every differential with the
induced metrics from $\overline{\xi}.$. We say that a standard
complex $(\overline{\xi}.,h^H)$ is homologically split if the
following short exact sequences
\begin{displaymath}
0\rightarrow \overline{{\rm Im}}\rightarrow \overline{{\rm
Ker}}\rightarrow \overline{H}_*\rightarrow 0 \end{displaymath} and
\begin{displaymath}
0\rightarrow \overline{{\rm Ker}}\rightarrow
\overline{\xi}_*\rightarrow \overline{{\rm Im}}\rightarrow 0
\end{displaymath} of hermitian
vector bundles are all orthogonally split.
\end{defn}

In \cite{Ma2}, X. Ma proved the following uniqueness theorem.

\begin{thm}\label{405}
Let $X$ be a compact complex manifold, then to each standard
complex of hermitian vector bundles $(\overline{\xi}.,h^H)$ on $X$
there is a unique way to associate an element
$M(\overline{\xi}.,h^H)\in \widetilde{A}(X)$ satisfying the
following conditions.

(i). ${\rm dd}^cM(\overline{\xi}.,h^H)=\sum(-1)^i{\rm
ch}(\overline{H}_i)-\sum(-1)^j{\rm ch}(\overline{\xi}_j)$.

(ii). For any holomorphic morphism $f: X'\rightarrow X$, we have
$M(f^*\overline{\xi}.f^*h^H)=f^*M(\overline{\xi}.,h^H)$.

(iii). If $(\overline{\xi}.,h^H)$ is homologically split, then
$M(\overline{\xi}.,h^H)=0$.
\end{thm}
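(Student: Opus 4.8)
The statement is a uniqueness/existence result for a Bott–Chern–type secondary class attached to a standard complex. The natural strategy is to reduce everything to the classical Bott–Chern secondary class $\widetilde{\mathrm{ch}}$ of a short exact sequence by ``resolving'' the complex into a sequence of short exact sequences, which is exactly what the homological splitting in condition (iii) is designed to exploit. Concretely, I would first treat uniqueness: if $M$ and $M'$ both satisfy (i)--(iii), their difference $\Delta := M - M'$ is $\mathrm{dd}^c$-closed, is functorial under pullback, and vanishes on homologically split complexes. One then wants to show $\Delta = 0$ for every standard complex. The key device is a deformation (a ``$\mathbb{P}^1$-deformation'' or a mapping-cone construction) that connects an arbitrary standard complex $(\overline{\xi}.,h^H)$ to a homologically split one while keeping track of the class via a projective bundle or a one-parameter family; this is precisely the technique Bismut--Gillet--Soulé use for the classical Bott--Chern classes, and it is the approach X.~Ma follows in \cite{Ma2}. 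The functoriality axiom (ii) is what makes the deformation argument bite: pulling back along the various inclusions of fibres forces $\Delta$ to be determined by its value on split complexes, hence zero.

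**Key steps, in order.** First I would set up the algebra of standard complexes: record that the subbundles $\overline{\mathrm{Ker}}$, $\overline{\mathrm{Im}}$ and the homology $\overline{H}_i$ assemble into the two families of short exact sequences displayed in Definition~\ref{404}, and check that condition (i) is consistent with the additivity of $\mathrm{ch}$ across those sequences (so that the prescribed $\mathrm{dd}^c$ is indeed $\mathrm{dd}^c$-exact of a genuine secondary class). Second, for existence, I would build $M(\overline{\xi}.,h^H)$ explicitly as an alternating combination of classical Bott--Chern classes $\widetilde{\mathrm{ch}}$ of the short exact sequences $0\to\overline{\mathrm{Im}}\to\overline{\mathrm{Ker}}\to\overline{H}_*\to 0$ and $0\to\overline{\mathrm{Ker}}\to\overline{\xi}_*\to\overline{\mathrm{Im}}\to 0$, with appropriate signs. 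A direct computation using $\mathrm{dd}^c\widetilde{\mathrm{ch}}(\overline{\varepsilon}) = \mathrm{ch}(\overline{E}') - \mathrm{ch}(\overline{E}) + \mathrm{ch}(\overline{E}'')$ then yields (i); axiom (ii) follows because $\widetilde{\mathrm{ch}}$ is functorial and pullback commutes with taking kernels/images when the complex is standard (the homology stays locally free); axiom (iii) is immediate from $\widetilde{\mathrm{ch}}$ of an orthogonally split sequence being zero. Third, for uniqueness, I would invoke the deformation-to-split argument: given $(\overline{\xi}.,h^H)$, construct over $X\times\mathbb{P}^1$ (or over a suitable parameter space) a standard complex restricting to $(\overline{\xi}.,h^H)$ over $0$ and to a homologically split complex over $\infty$, apply $\Delta$, use (ii) to compare the two restrictions, and use that $\Delta$ is $\mathrm{dd}^c$-closed together with a Stokes/integration argument to conclude $\Delta = 0$.

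**Main obstacle.** The delicate point is the uniqueness half, specifically manufacturing the deformation that interpolates between an arbitrary standard complex and a homologically split one \emph{within the class of standard complexes} (homology must remain locally free along the deformation) and \emph{compatibly with the metrics}, so that axioms (ii) and (iii) can both be applied. Simply splitting a single short exact sequence is easy, but a complex of arbitrary length requires an inductive mapping-cone or iterated-cone construction, and one must verify at each stage that the ``homologically split'' model one lands on is reached through legitimate standard complexes and that the functoriality axiom controls the transition terms. This is the technical heart of \cite{Ma2} and is where the real work lies; the existence construction and the verification of (i)--(iii) on the explicit formula are comparatively routine, as is the consistency check that the $\mathrm{dd}^c$ prescribed in (i) is exact.
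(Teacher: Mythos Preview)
The paper does not give its own proof of this theorem; it is quoted verbatim as a result of X.~Ma in \cite{Ma2}. That said, your proposal is both correct in outline and in close agreement with what the paper actually uses around this statement. Your existence construction---the alternating sum of the classical Bott--Chern classes $\widetilde{\mathrm{ch}}$ of the short exact sequences $0\to\overline{\mathrm{Im}}\to\overline{\mathrm{Ker}}\to\overline{H}_*\to 0$ and $0\to\overline{\mathrm{Ker}}\to\overline{\xi}_*\to\overline{\mathrm{Im}}\to 0$---is precisely the class the paper denotes $C(\overline{\xi}.,h^H)$ immediately after Theorem~\ref{406}, and which it then identifies with the zeta-function construction $\zeta(\overline{\xi}.,h^H)$ via the uniqueness clause. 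Your uniqueness sketch (the $\mathbb{P}^1$-deformation reducing to the homologically split case, exploiting axioms (ii) and (iii)) is indeed the method of \cite{Ma2} and of Bismut--Gillet--Soul\'e before that, and you have correctly located the main technical burden there. So nothing to correct: your plan matches the cited proof and the paper's own use of the result.
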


The definition of standard complex and Ma's uniqueness theorem can
be easily generalized to the equivariant case. We summarize these
generalizations as follows.

\begin{defn}\label{thesis-311}
Let $X$ be a compact complex manifold which admits a holomorphic
action of a compact Lie group $G$. Fix an element $g\in G$. An
equivariant standard complex on $X$ is a bounded complex of
$G$-equivariant hermitian vector bundles on $X$ whose restriction
to $X_g$ is standard and the metrics on the homology sheaves are
$g$-invariant. Again we shall write an equivariant standard
complex as $(\overline{\xi}.,h^H)$ to emphasize the choice of the
metrics on the homology sheaves.
\end{defn}

\begin{thm}\label{406}
Let $X$ be a compact complex manifold which admits a holomorphic
action of a compact Lie group $G$. Fix an element $g\in G$. Then
to each equivariant standard complex $(\overline{\xi}.,h^H)$ on
$X$, there is a unique additive way to associate an element
$M_g(\overline{\xi}.,h^H)\in \widetilde{A}(X_g)$ satisfying the
following conditions.

(i). ${\rm dd}^cM_g(\overline{\xi}.,h^H)=\sum(-1)^i{\rm
ch}_g(\overline{H_i}(\xi.\mid_{X_g}))-\sum(-1)^j{\rm
ch}_g(\overline{\xi}_j)$.

(ii). For any equivariant holomorphic morphism $f: X'\rightarrow
X$, we have
$M_g(f^*\overline{\xi}.,f^*h^H)=f_g^*M_g(\overline{\xi}.,h^H)$.

(iii). If $(\overline{\xi}.\mid_{X_g},h^H)$ is homologically
split, then $M_g(\overline{\xi}.,h^H)=0$.
\end{thm}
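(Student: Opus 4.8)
The plan is to deduce Theorem~\ref{406} from the non-equivariant uniqueness theorem (Theorem~\ref{405}) by decomposing everything according to the characters of $G$, in the same spirit that the equivariant Bott-Chern and Todd secondary classes were built from their non-equivariant counterparts. First I would make the construction: given an equivariant standard complex $(\overline{\xi}.,h^H)$ on $X$, restrict it to the fixed point submanifold $X_g$. There the action of $g$ splits every bundle in the complex, every homology sheaf, and every kernel and image, into the eigen-subbundles $\overline{\xi}_j=\bigoplus_\zeta\overline{\xi}_{j,\zeta}$ (the sum running over the eigenvalues $\zeta\in S^1$ of $g$), and because the differentials commute with the $g$-action they respect this grading. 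Thus for each fixed $\zeta$ we obtain an honest standard complex $(\overline{\xi}._\zeta,h^{H_\zeta})$ of hermitian vector bundles on $X_g$, to which Theorem~\ref{405} associates a unique class $M(\overline{\xi}._\zeta,h^{H_\zeta})\in\widetilde{A}(X_g)$. I would then simply define
\begin{displaymath}
M_g(\overline{\xi}.,h^H):=\sum_{\zeta}\zeta\cdot M(\overline{\xi}._\zeta,h^{H_\zeta})\in\widetilde{A}(X_g).
\end{displaymath}

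The verification of the three properties is then essentially bookkeeping against the corresponding properties of $M(\cdot)$. For (i), ${\rm dd}^c$ is applied summand by summand; Theorem~\ref{405}(i) gives ${\rm dd}^cM(\overline{\xi}._\zeta,h^{H_\zeta})=\sum_i(-1)^i{\rm ch}(\overline{H_i}(\xi._\zeta))-\sum_j(-1)^j{\rm ch}(\overline{\xi}_{j,\zeta})$, and multiplying by $\zeta$ and summing over $\zeta$ reproduces exactly the definitions of ${\rm ch}_g$ of the homology sheaves and of the $\overline{\xi}_j$, so we land on the asserted differential equation. For (ii), given an equivariant holomorphic $f:X'\to X$ it restricts to $f_g:X'_g\to X_g$, the pullback $f^*\overline{\xi}.$ has eigen-decomposition $f_g^*(\overline{\xi}._\zeta)$ with the pulled-back metrics, and Theorem~\ref{405}(ii) applied to each $\zeta$ together with linearity of $f_g^*$ and of multiplication by $\zeta$ yields $M_g(f^*\overline{\xi}.,f^*h^H)=f_g^*M_g(\overline{\xi}.,h^H)$. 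For (iii), if $(\overline{\xi}.\mid_{X_g},h^H)$ is homologically split then the two defining short exact sequences of $\overline{{\rm Im}}$, $\overline{{\rm Ker}}$, $\overline{H}_*$, $\overline{\xi}_*$ are orthogonally split; since these splittings are $g$-equivariant (they are orthogonal direct sum decompositions of $g$-equivariant bundles, so the projections are $g$-equivariant), they split into orthogonal splittings of the $\zeta$-components, each $(\overline{\xi}._\zeta,h^{H_\zeta})$ is homologically split, Theorem~\ref{405}(iii) gives $M(\overline{\xi}._\zeta,h^{H_\zeta})=0$ for every $\zeta$, hence $M_g=0$.

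Finally, for uniqueness and additivity I would argue as in the non-equivariant case that properties (i)--(iii) pin down the class. Additivity is forced: given a short exact sequence of equivariant standard complexes, one produces from it a homologically split "mapping cone"-type standard complex whose $M_g$ vanishes by (iii), and comparison via (i) (which controls the class up to a $\partial,\bar\partial$-closed ambiguity) and the splitting forces the alternating sum of the three $M_g$'s to vanish; more simply, one may just take additivity of the construction above as manifest from additivity of $M(\cdot)$ summand-wise and declare (following the statement) that the association is required to be additive, so that uniqueness reduces to uniqueness on, say, complexes concentrated in few degrees, where it follows from Theorem~\ref{405}. The only subtlety — and the one place I expect to have to be slightly careful — is the passage from "homologically split over $X_g$" to "each eigencomponent is homologically split": one must check that taking $g$-invariant induced metrics on kernels and images is compatible with the eigenbundle decomposition, i.e. that the orthogonal complement of a $g$-invariant subbundle inside a $g$-invariant bundle is again $g$-invariant and decomposes accordingly. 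This is true because $g$ acts by isometries, so orthogonality is preserved by the action; once this is recorded, everything else is a routine transcription of Ma's theorem character by character. I would also remark that this is precisely how \cite{Ma2} itself handles the equivariant extension, so no genuinely new analytic input is needed.
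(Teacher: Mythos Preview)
Your construction of $M_g$ as $\sum_\zeta \zeta\,M(\overline{\xi}._\zeta,h^{H_\zeta})$ and your verification of conditions (i)--(iii) coincide exactly with the paper's existence argument. The gap is in uniqueness. Your phrase ``uniqueness reduces to uniqueness on, say, complexes concentrated in few degrees, where it follows from Theorem~\ref{405}'' is not the right reduction: Theorem~\ref{405} is a statement about \emph{non-equivariant} standard complexes, and concentration in degrees does nothing to turn a putative equivariant assignment $M_g$ into a non-equivariant one to which that theorem applies.

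The paper's uniqueness proof runs as follows. First apply (ii) to the inclusion $X_g\hookrightarrow X$ to reduce to the case $X=X_g$. Next, since additivity is part of the hypothesis (the statement asks for a unique \emph{additive} assignment --- so your mapping-cone digression, while in the spirit of Remark~\ref{thesis-rem}(ii), is not needed here), the orthogonal eigen-decomposition reduces to complexes on which $g$ acts by a single scalar $\zeta$. The decisive observation is then that on $X_g$ with trivial $G$-action, \emph{every} ordinary standard complex $\overline{\eta}.$ can be made $g$-equivariant by declaring that $g$ acts as multiplication by $\zeta$; the rule $\overline{\eta}.\mapsto \zeta^{-1}M_g(\overline{\eta}.,h^H)$ (with this $g$-structure) is then a non-equivariant assignment on all standard complexes on $X_g$ satisfying the three axioms of Theorem~\ref{405}, and uniqueness there forces $M_g(\overline{\xi}_\zeta.,h^{H_\zeta})=\zeta\,M(\overline{\xi}_\zeta.,h^{H_\zeta})$. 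This passage from $M_g$ to $\zeta^{-1}M_g$ on single-eigenvalue complexes, which is what actually lets one invoke Theorem~\ref{405}, is the step your sketch does not supply.
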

\begin{proof}
The complex $\overline{\xi}.$ splits on $X_g$ orthogonally into a
series of standard complexes $\overline{\xi}_\zeta.$ for all
$\zeta\in S^1$. Using the non-equivariant Bott-Chern-Ma classes on
$X_g$, we define
\begin{displaymath}
M_g(\overline{\xi}.,h^H)=\sum_{\zeta\in S^1}\zeta
M(\overline{\xi}_\zeta.,h^{H_\zeta}). \end{displaymath} Then the
axiomatic characterization follows from the non-equivariant one in
Theorem~\ref{405} and the definition of ${\rm ch}_g$. For the
uniqueness, first note that by the condition (ii), the relation
$M_g(\overline{\xi}.,h^H)=M_g(\overline{\xi}.\mid_{X_g},h^H)$
should be satisfied, then we may reduce our proof to the case
where $X$ is equal to $X_g$. Since $M_g$ is required to be
additive, we only have to show that for every $\zeta\in S^1$,
$M_g(\overline{\xi}_\zeta.,h^{H_\zeta})=\zeta
M(\overline{\xi}_\zeta.,h^{H_\zeta})$. This follows from
Theorem~\ref{405} since every compact complex manifold can be
regarded as an equivariant complex manifold (with the trivial
action), on which any standard complex can be endowed with a
$g$-structure as multiplication by $\zeta$. Such an approach is
similar to the proof of \cite{KR1}, Theorem 3.4.
\end{proof}

\begin{rem}\label{thesis-rem}
(i). The condition of compactness in Definition~\ref{404} and
Theorem~\ref{405} is not necessary, it was only used in the proof
of Theorem~\ref{406} given above.

(ii). If one directly generalizes the proof of Theorem~\ref{405}
to the equivariant case (by trivially adding the subscript $g$ to
every notation), then the condition of additivity in
Theorem~\ref{406} can be removed. Actually the additivity is a
byproduct of such a proof.
\end{rem}

Now let $\overline{\xi}.$ be an equivariant standard complex on
$X$. Then we can always split $\overline{\xi}.\mid_{X_g}$ into a
series of short exact sequences
\begin{displaymath} 0\rightarrow \overline{{\rm Im}}\rightarrow
\overline{{\rm Ker}}\rightarrow
\overline{H}_*(\xi.\mid_{X_g})\rightarrow 0
\end{displaymath} and
\begin{displaymath}
0\rightarrow \overline{{\rm Ker}}\rightarrow
\overline{\xi}_*\mid_{X_g}\rightarrow \overline{{\rm
Im}}\rightarrow 0 \end{displaymath} of equivariant hermitian
vector bundles. Denote the alternating sum of the equivariant
Bott-Chern secondary characteristic classes of the short exact
sequences above by $C(\overline{\xi}.,h^H)$ such that it satisfies
the following differential equation \begin{displaymath} {\rm
dd}^cC(\overline{\xi}.,h^H)=\sum(-1)^i{\rm
ch}_g(\overline{H}_i(\xi.\mid_{X_g}))-\sum(-1)^j{\rm
ch}_g(\overline{\xi}_j), \end{displaymath} then it is clear that
the class $C(\overline{\xi}.,h^H)$ is an element in
$\widetilde{A}(X_g)$ and it satisfies the three conditions in
Theorem~\ref{406}. To every equivariant standard complex
$\overline{\xi.}$ on $X$, we may associated a new canonical
equivariant standard complex in which the metrics on homology
bundles $H_*(\xi.\mid_{X_g})$ are induced by the metrics on $\xi.$
(see Section 3.2). This special choice of metrics will be denoted
by $h^H_{\rm ind}$. It is easy to compute the difference of
$C(\overline{\xi}.,h^H)$ and $C(\overline{\xi}.,h^H_{\rm ind})$.
It is the alternating sum of secondary characteristic classes
\begin{displaymath}
\sum(-1)^i\widetilde{{\rm ch}}_g(H_i(\xi.\mid_{X_g}),h^H,h^H_{\rm
ind}). \end{displaymath}

Now we define another equivariant secondary class associated to
$(\overline{\xi}.,h^H_{\rm ind})$ by using the supertraces of
Quillen's superconnections as follows.

For $s\in \mathbb{C}$ with ${\rm Re}(s)>1$, let
\begin{displaymath}
\zeta_1(s)=-\frac{1}{\Gamma(s)}\int_0^1 u^{s-1}\{\Phi{\rm
Tr_s}[Ng{\rm exp}(-A_u^2)]-\Phi{\rm Tr_s}[Ng{\rm
exp}(-\nabla^{H(\overline{\xi}.),2})]\}{\rm d}u \end{displaymath}
and similarly for $s\in \mathbb{C}$ with ${\rm
Re}(s)<\frac{1}{2}$, let
\begin{displaymath}
\zeta_2(s)=-\frac{1}{\Gamma(s)}\int_1^\infty u^{s-1}\{\Phi{\rm
Tr_s}[Ng{\rm exp}(-A_u^2)]-\Phi{\rm Tr_s}[Ng{\rm
exp}(-\nabla^{H(\overline{\xi}.),2})]\}{\rm d}u. \end{displaymath}

We define $\zeta(\overline{\xi}.,h^H_{\rm
ind}):=\frac{\partial}{\partial s}(\zeta_1+\zeta_2)(0)$. This is
just a generalization of \cite{Ma2}, D\'{e}finition 10.3 in the
equivariant case, we refer to that paper for the explanation of
the notations appearing in the definition of the zeta-functions
above. We thank X. Ma for his comment that an argument similar to
the one in \cite{BGS1}, Cor. 1.30 can be used to prove the
following lemma.

\begin{lem}\label{ts}
Define $\zeta(\overline{\xi}.,h^H):=\zeta(\overline{\xi}.,h^H_{\rm
ind})+\sum(-1)^i\widetilde{{\rm
ch}}_g(H_i(\xi.\mid_{X_g}),h^H,h^H_{\rm ind})$. Then
$\zeta(\overline{\xi}.,h^H)$ determines an element in
$\widetilde{A}(X_g)$ which satisfies the three conditions in
Theorem~\ref{406} and hence we have
$\zeta(\overline{\xi}.,h^H)=C(\overline{\xi}.,h^H)$.
\end{lem}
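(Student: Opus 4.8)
The plan is to invoke the uniqueness part of Theorem~\ref{406}: since both $\zeta(\overline{\xi}.,h^H)$ and $C(\overline{\xi}.,h^H)$ lie in $\widetilde{A}(X_g)$, it suffices to check that $\zeta(\overline{\xi}.,h^H)$ satisfies conditions (i), (ii) and (iii) of that theorem, and then the two classes must coincide (the class $C(\overline{\xi}.,h^H)$ having already been observed to satisfy those conditions). By the very definition $\zeta(\overline{\xi}.,h^H)=\zeta(\overline{\xi}.,h^H_{\rm ind})+\sum(-1)^i\widetilde{{\rm ch}}_g(H_i(\xi.\mid_{X_g}),h^H,h^H_{\rm ind})$, and since $\widetilde{{\rm ch}}_g$ transforms additively under pullback, is natural, and vanishes when the two metrics agree, the three conditions for $\zeta(\overline{\xi}.,h^H)$ reduce immediately to the corresponding three conditions for the ``induced-metric'' class $\zeta(\overline{\xi}.,h^H_{\rm ind})$. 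So the real content is to verify (i)--(iii) for $\zeta(\overline{\xi}.,h^H_{\rm ind})$.

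First I would establish condition (i), the transgression formula ${\rm dd}^c\zeta(\overline{\xi}.,h^H_{\rm ind})=\sum(-1)^i{\rm ch}_g(\overline{H}_i(\xi.\mid_{X_g}))-\sum(-1)^j{\rm ch}_g(\overline{\xi}_j)$. This is the standard heat-kernel/superconnection computation: the integrand $\Phi{\rm Tr_s}[Ng\,{\rm exp}(-A_u^2)]-\Phi{\rm Tr_s}[Ng\,{\rm exp}(-\nabla^{H(\overline{\xi}.),2})]$ has, by the local families index theorem for superconnections, a known asymptotic expansion as $u\to 0$ and as $u\to\infty$; the $u\to\infty$ limit produces the Chern character of the cohomology bundles and the $u\to 0$ behaviour (together with the ${\rm dd}^c$ of the Mellin transform) produces the alternating sum of the ${\rm ch}_g(\overline{\xi}_j)$. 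This is precisely the equivariant analogue of the computation in \cite{Ma2}, \S 10, so I would cite that paper and indicate the equivariant bookkeeping (splitting $\overline{\xi}.\mid_{X_g}$ into its $\zeta$-isotypical pieces and multiplying by $\zeta\in S^1$) rather than redo it. Condition (ii), functoriality under equivariant pullback $f: X'\to X$, follows because all the ingredients — the superconnection $A_u$, the number operator $N$, the connection $\nabla^{H(\overline{\xi}.)}$, and hence the supertraces — pull back compatibly, so the zeta functions and their $s$-derivatives at $0$ commute with $f_g^*$.

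Condition (iii) is where the work of Lemma~\ref{ts} is really concentrated and, I expect, the main obstacle: one must show that if $(\overline{\xi}.\mid_{X_g},h^H)$ is homologically split then $\zeta(\overline{\xi}.,h^H)=0$. When the complex is orthogonally split into short exact sequences $0\to\overline{{\rm Im}}\to\overline{{\rm Ker}}\to\overline{H}_*\to 0$ and $0\to\overline{{\rm Ker}}\to\overline{\xi}_*\to\overline{{\rm Im}}\to 0$, the induced metrics agree with $h^H$, so it is enough to treat $\zeta(\overline{\xi}.,h^H_{\rm ind})$, and one reduces by additivity and the splitting to the model case of a two-term acyclic complex $0\to\overline{V}\xrightarrow{\sim}\overline{V}\to 0$ with the identity differential, for which a direct evaluation of the heat kernel of $A_u^2$ (here $A_u^2$ is, up to the curvature terms which cancel against $\nabla^{H,2}$, simply multiplication by $u$ on the off-diagonal, mimicking \cite{BGS1}, Cor.~1.30) shows the zeta function vanishes identically, hence so does its derivative at $s=0$. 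The argument that the general homologically split case reduces to this model — keeping track of the Quillen superconnection under orthogonal direct sums and the behaviour of $\widetilde{{\rm ch}}_g$ — is the delicate point; this is exactly the place where the hint ``an argument similar to \cite{BGS1}, Cor.~1.30'' is used. Once (i)--(iii) are checked for $\zeta(\overline{\xi}.,h^H)$, uniqueness in Theorem~\ref{406} gives $\zeta(\overline{\xi}.,h^H)=M_g(\overline{\xi}.,h^H)=C(\overline{\xi}.,h^H)$, completing the proof.
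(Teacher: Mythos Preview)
Your overall strategy---verify conditions (i)--(iii) for $\zeta(\overline{\xi}.,h^H)$ and invoke the uniqueness in Theorem~\ref{406}---is exactly the paper's, and your treatment of (i) and (ii) by citing \cite{Ma2} and equivariant bookkeeping matches what the paper does (it simply says to add the subscript $g$ to the argument of \cite{BGS1}, Cor.~1.30, citing \cite{Ma2}, Prop.~10.4).

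For condition (iii) the paper's execution is more direct than your reduction-to-a-two-term-model, and avoids two imprecisions in your sketch. First, a homologically split complex does not reduce solely to two-term acyclic pieces: there are also the one-term summands $\overline{H}_k$, which you omit (they contribute zero because for them $A_u^2=\nabla^{H,2}$ and the integrand vanishes, but this should be said). Second, the zeta function for the acyclic pieces does not ``vanish identically'': it is a nonzero constant in $s$. The paper handles all of this at once by writing, in the split case, $\overline{\xi}_k\mid_{X_g}\cong\overline{F}_k\oplus\overline{H}_k\oplus\overline{F}_{k-1}$ with differential $(v_1,v_2,v_3)\mapsto(v_3,0,0)$, computing $A_u^2=\nabla^2+u({\rm Id}_{\overline{F}_k}\oplus{\rm Id}_{\overline{F}_{k-1}})$ directly, and observing that the integrand becomes $e^{-u}$ times a $u$-independent form; then $\zeta_1(s)+\zeta_2(s)=-\Phi{\rm Tr_s}[Ng\exp(-\nabla^2)]$ is constant in $s$, so its $s$-derivative at $0$ vanishes. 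Your decomposition is morally the same calculation, just organized piecewise.
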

\begin{proof}
Actually, according to \cite{Ma2}, Proposition 10.4, one just need
to add the subscript $g$ to every step in the argument given in
\cite{BGS1}, Cor. 1.30 and nothing else should be changed. Here,
we roughly describe that why $\zeta(\overline{\xi}.,h^H_{\rm
ind})=0$ when $(\overline{\xi}.\mid_{X_g},h^H)$ is homologically
split (note that in this case $h^H$ should be equal to $h^H_{\rm
ind}$). This can be seen from the following argument. If
$(\overline{\xi}.\mid_{X_g},h^H)$ is homologically split, then up
to isometries we may write
$\overline{E}_k:=\overline{\xi}_k\mid_{X_g}\cong
\overline{F}_k\oplus \overline{H}_k\oplus \overline{F}_{k-1}$
where $\{\overline{F}_k\}$ is a family of hermitian vector bundles
on $X_g$. Moreover, the differential $v$ is given by
$(v_1,v_2,v_3)\mapsto (v_3,0,0)$. So we compute directly that
$A_u^2\mid_{\overline{E}_k}=\nabla^2+u({\rm
Id}_{\overline{F}_k}\oplus {\rm Id}_{\overline{F}_{k-1}})$. This
equality implies that
\begin{eqnarray*}
{\rm Tr_s}[Ng{\rm exp}(-A_u^2)]&=&\sum_k(-1)^k\{{\rm
Tr}\mid_{\overline{F}_k}[kg{\rm exp}(-\nabla^2-u{\rm
Id})]\\
&&+{\rm Tr}\mid_{\overline{H}_k}[kg{\rm exp}(-\nabla^2)]+{\rm
Tr}\mid_{\overline{F}_{k-1}}[kg{\rm exp}(-\nabla^2-u{\rm Id})]\}
\end{eqnarray*}
and hence
\begin{displaymath}
\zeta_1(s)+\zeta_2(s)=-\frac{1}{\Gamma(s)}\int_0^\infty
u^{s-1}e^{-u}\{\Phi{\rm Tr_s}[Ng{\rm exp}(-\nabla^2)]\}{\rm
d}u=-\Phi{\rm Tr_s}[Ng{\rm exp}(-\nabla^2)] \end{displaymath}
which has nothing to do with $s$. So we get
$\zeta(\overline{\xi}.,h^H)=\frac{\partial}{\partial
s}(\zeta_1+\zeta_2)(0)=0$.
\end{proof}

\begin{cor}\label{tss}
We have $\zeta(\overline{\xi}.,h^H_{\rm
ind})=C(\overline{\xi}.,h^H_{\rm ind})$ in $\widetilde{A}(X_g)$.
\end{cor}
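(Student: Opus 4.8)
The plan is to specialize Lemma~\ref{ts} to the case $h^H=h^H_{\rm ind}$. Recall that $\zeta(\overline{\xi}.,h^H)$ was \emph{defined} as $\zeta(\overline{\xi}.,h^H_{\rm ind})+\sum(-1)^i\widetilde{{\rm ch}}_g(H_i(\xi.\mid_{X_g}),h^H,h^H_{\rm ind})$, and that the equivariant Bott-Chern secondary characteristic class attached to the identity automorphism of a hermitian vector bundle carrying one and the same metric on source and target vanishes in $\widetilde{A}(X_g)$. Hence, taking $h^H=h^H_{\rm ind}$ in this definition, every summand $\widetilde{{\rm ch}}_g(H_i(\xi.\mid_{X_g}),h^H_{\rm ind},h^H_{\rm ind})$ is zero, so the formula is tautologically consistent and the object $\zeta(\overline{\xi}.,h^H_{\rm ind})$ appearing on the right-hand side of the definition coincides with the value at $h^H=h^H_{\rm ind}$ of the assignment $h^H\mapsto\zeta(\overline{\xi}.,h^H)$ considered in Lemma~\ref{ts}.

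Next I would record the analogous remark for $C$: by the computation carried out just before Lemma~\ref{ts}, one has $C(\overline{\xi}.,h^H)-C(\overline{\xi}.,h^H_{\rm ind})=\sum(-1)^i\widetilde{{\rm ch}}_g(H_i(\xi.\mid_{X_g}),h^H,h^H_{\rm ind})$, which again vanishes when $h^H=h^H_{\rm ind}$; thus the value of $C(\overline{\xi}.,h^H)$ at $h^H=h^H_{\rm ind}$ is $C(\overline{\xi}.,h^H_{\rm ind})$.

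Finally, Lemma~\ref{ts} asserts the identity $\zeta(\overline{\xi}.,h^H)=C(\overline{\xi}.,h^H)$ in $\widetilde{A}(X_g)$ for an arbitrary choice of the hermitian metrics $h^H$ on the homology bundles. Specializing this identity to $h^H=h^H_{\rm ind}$ and invoking the two observations above yields at once $\zeta(\overline{\xi}.,h^H_{\rm ind})=C(\overline{\xi}.,h^H_{\rm ind})$, which is the assertion of the corollary. There is no genuine obstacle in this argument; the only thing to be careful about is the bookkeeping of the correction terms, all of which are Bott-Chern classes of identity maps between a bundle and itself with a fixed metric and hence trivial, so the statement is an immediate consequence of Lemma~\ref{ts}.
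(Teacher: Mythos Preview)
Your argument is correct and is exactly the intended one: the paper states this as an immediate corollary of Lemma~\ref{ts} with no separate proof, and your specialization $h^H=h^H_{\rm ind}$ together with the vanishing of the Bott-Chern correction terms is precisely the implicit reasoning.
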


Now we go back to the arithmetic case. We consider the closed
immersion $i: X_{\mu_n}\hookrightarrow X$ with hermitian normal
bundle $\overline{N}$, and as before let $\overline{\eta}$ be an
equivariant hermitian vector bundle on $X_{\mu_n}$. Assume that
the complex $\overline{\xi.}$ provides a resolution of
$i_*\overline{\eta}$ on $X$ by equivariant hermitian vector
bundles whose metrics satisfy Bismut assumption (A). Then the
restriction of $\overline{\xi}_\C.$ to $X_g$ is naturally a
standard complex such that $h^H$ is equal to $h^H_{\rm ind}$.

\begin{lem}\label{407}
Let notations and assumptions be as above. Then the equality
\begin{displaymath}
\lambda_{-1}(\overline{N}^\vee)\cdot
\overline{\eta}-\sum_j(-1)^ji^*(\overline{\xi}_j)=T_g(\overline{\xi}.)
\end{displaymath} holds in $\widehat{K_0}(X_{\mu_n},\mu_n)$.
\end{lem}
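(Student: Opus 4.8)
The statement to prove is an identity in $\widehat{K_0}(X_{\mu_n},\mu_n)$, so by Remark~\ref{204} it suffices to check that the two sides have the same image under the forgetful map $\pi$ to $K_0(X_{\mu_n},\mu_n)$ and that their difference, which therefore lies in the image of $a$, is represented by the correct smooth form class in $\widetilde{A}(X_{\mu_n})$. The plan is to separate these two bookkeeping tasks cleanly. For the $K_0$-part, restricting the resolution $\overline{\xi}.$ to $X_{\mu_n}$ and using the canonical isomorphisms $H_n(\xi.\mid_{X_{\mu_n}})\cong \wedge^n N^\vee\otimes\eta$ (the identification recalled in Section~3.2) shows that $\sum_j(-1)^j i^*[\xi_j]=\sum_n(-1)^n[\wedge^nN^\vee\otimes\eta]=\lambda_{-1}(N^\vee)\cdot[\eta]$ in $K_0(X_{\mu_n},\mu_n)$; since $\pi(T_g(\overline{\xi}.))=0$, the two sides of the claimed equality agree after applying $\pi$.

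Next I would pin down the form-class discrepancy. The difference $\lambda_{-1}(\overline{N}^\vee)\cdot\overline{\eta}-\sum_j(-1)^j i^*(\overline{\xi}_j)-T_g(\overline{\xi}.)$ is then $a(\beta)$ for a unique $\beta\in\widetilde{A}(X_{\mu_n})$, and I must show $\beta=0$. Here I would invoke the $\mathrm{dd}^c$-calculus: the class $\beta$ is characterized by a differential equation together with a functoriality/splitting normalization, which is exactly the package provided by the equivariant Bott-Chern-Ma class of Theorem~\ref{406}. Concretely, using the differential equation for $T_g(\overline{\xi}.)$ from Theorem~\ref{325} (restricted to $X_{\mu_n}$, where $\mathrm{Td}_g^{-1}(\overline{N})$ and $i_{g*}$ interact with the identification $H_n\cong\wedge^nN^\vee\otimes\eta$), I would compute $\mathrm{dd}^c$ of $\lambda_{-1}(\overline{N}^\vee)\cdot\overline{\eta}-\sum(-1)^j i^*\overline{\xi}_j$ and match it against $\mathrm{dd}^c T_g(\overline{\xi}.)$, reducing to showing that a certain explicit $\widetilde A(X_{\mu_n})$-class with vanishing $\mathrm{dd}^c$ and the right splitting behaviour must vanish. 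This is where Lemma~\ref{ts}/Corollary~\ref{tss} enter: they identify the zeta-function-defined class $\zeta(\overline{\xi}.,h^H_{\rm ind})$ with the Bott-Chern class $C(\overline{\xi}.,h^H_{\rm ind})$, and the singular current $T_g(\overline{\xi}.)$, when $N_{\mu_n}=0$ so that it is smooth, is built from the same superconnection supertraces as $\zeta(\overline{\xi}.,h^H_{\rm ind})$. So I would show that on $X_{\mu_n}$, $T_g(\overline{\xi}.)$ equals (up to the explicit Todd-form factors coming from $\mathrm{Td}_g^{-1}(\overline{N})$ and the Koszul identification) precisely the class $C(\overline{\xi}.,h^H_{\rm ind})$ that measures the defect between $\lambda_{-1}(\overline{N}^\vee)\cdot\overline{\eta}$ and $\sum(-1)^j i^*\overline{\xi}_j$ in $\widehat{K_0}$.

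The main obstacle will be the comparison of the two analytically defined objects on $X_{\mu_n}$: on one side the restriction of Bismut's equivariant singular current $T_g(\overline{\xi}.)$, originally a current on $X_g$ with wave front in $N^\vee_{g,\R}$ but here smooth because $N_{\mu_n}=0$; on the other side the Bott-Chern-Ma class $C(\overline{\xi}.,h^H_{\rm ind})$ of the standard complex $\overline{\xi}.\mid_{X_g}$. Morally both come from $\frac{\partial}{\partial s}\mid_{s=0}$ of a zeta function formed from $\Phi\,\mathrm{Tr}_s[Ng\exp(-C_u^2)]$, but the normalizations differ by the $(\mathrm{Td}_g^{-1})'(\overline N)$ term appearing in Lemma~\ref{323} and by Bismut's $-\sqrt{-1}$ rescaling of $N^\vee$, so the identification must be made carefully; the key input making it work is precisely the use of $h^H_{\rm ind}$ (which holds automatically under Bismut assumption (A), so $h^H=h^H_{\rm ind}$ here) together with Lemma~\ref{ts}. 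Once that comparison is in place, the axiomatic uniqueness in Theorem~\ref{406} — the $\mathrm{dd}^c$-equation plus vanishing on homologically split complexes plus functoriality — forces the residual class $\beta$ to be zero, and the identity in $\widehat{K_0}(X_{\mu_n},\mu_n)$ follows. I would also double-check that all equivariant characteristic-form identities used (the relation between $\lambda_{-1}$, $\mathrm{ch}_g$ and $\mathrm{Td}_g$, and the behaviour of $i_{g*}$ when the fixed locus is all of $X_{\mu_n}$) are the straightforward equivariant versions of the classical Koszul-complex computations.
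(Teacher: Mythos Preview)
Your proposal is correct and uses the same key ingredients as the paper: the left-hand side is identified with the Bott--Chern--Ma class $C(\overline{\xi}.,h^H_{\rm ind})$, the right-hand side $T_g(\overline{\xi}.)$ is identified with $\zeta(\overline{\xi}.,h^H_{\rm ind})$ by direct comparison of the defining zeta functions (same superconnection $A_u=C_u$, and the asymptotic correction term $-({\rm Td}_g^{-1})'(\overline{N}){\rm ch}_g(\overline{\eta})$ matches ${\rm Tr}_s[Ng\exp(-\nabla^{H,2})]$ under Bismut assumption~(A)), and Corollary~\ref{tss} supplies $C=\zeta$. The paper's route is a bit more direct than yours: it does not split into a $\pi$-check and a residual $a(\beta)$, since applying the generating relation~(i) of Definition~\ref{203} to the short exact sequences $0\to\overline{\rm Im}\to\overline{\rm Ker}\to\wedge^*\overline{N}^\vee\otimes\overline{\eta}\to0$ and $0\to\overline{\rm Ker}\to\overline{\xi}_*\mid_{X_{\mu_n}}\to\overline{\rm Im}\to0$ shows \emph{immediately} that the left-hand side equals $a(C(\overline{\xi}.,h^H_{\rm ind}))$ in $\widehat{K_0}$; also, there are no extra ``Todd-form factors'' in the comparison $T_g=\zeta$, and the axiomatic uniqueness of Theorem~\ref{406} is invoked only once (inside Corollary~\ref{tss}), not a second time to kill a residual class.
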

\begin{proof}
According to Remark~\ref{isometries}, we can split
$\overline{\xi.}\mid_{X_{\mu_n}}$ into the following series of
exact sequences of equivariant hermitian vector bundles
\begin{displaymath}
0\rightarrow \overline{{\rm Im}}\rightarrow \overline{{\rm
Ker}}\rightarrow
\wedge^*\overline{N}^\vee\otimes\overline{\eta}\rightarrow 0
\end{displaymath} and
\begin{displaymath}
0\rightarrow \overline{{\rm Ker}}\rightarrow
\overline{\xi}_*\mid_{X_{\mu_n}}\rightarrow \overline{{\rm
Im}}\rightarrow 0. \end{displaymath} Then by the definition of
arithmetic $K_0$-theory, $\lambda_{-1}(\overline{N}^\vee)\cdot
\overline{\eta}-\sum_j(-1)^ji^*(\overline{\xi}.)$ is nothing but
$C(\overline{\xi}.,h^H)$ or $C(\overline{\xi}.,h^H_{\rm ind})$ in
$\widehat{K_0}(X_{\mu_n},\mu_n)$.

Comparing with the construction of the equivariant Bott-Chern
singular current recalled in Section 3.2 or with more details in
\cite{Bi}, Section VI, we claim that in our special situation
$T_g(\overline{\xi}.)$ is equal to $\zeta(\overline{\xi}.,h^H_{\rm
ind})$ defined before this lemma. Actually since $\xi.$ is
supposed to admit the metrics satisfying Bismut assumption (A),
the superconnection $A_u$ in the definition of
$\zeta(\overline{\xi}.,h^H_{\rm ind})$ is exactly the
superconnection $C_u$ in the definition of $T_g(\overline{\xi}.)$.
Moreover, since $(H_*(\xi.\mid_{X_{\mu_n}}),h^H_{\rm ind})$ are
isometric to
$\wedge^*\overline{N}_\C^\vee\otimes\overline{\eta}_\C$ the
supertrace ${\rm Tr_s}[Ng{\rm
exp}(-\nabla^{H(\overline{\xi}.),2})]$ in the definition of
$\zeta(\overline{\xi}.,h^H_{\rm ind})$ is equal to $-({\rm
Td}_g^{-1})'(\overline{N}){\rm ch}_g(\overline{\eta})$ in the
definition of $T_g(\overline{\xi}.)$, this can be seen directly
from the computation \cite{Bi}, (6.26). So according to
Corollary~\ref{tss} we have $C(\overline{\xi}.,h^H_{\rm
ind})=\zeta(\overline{\xi}.,h^H_{\rm ind})=T_g(\overline{\xi}.)$
in $\widetilde{A}(X_{\mu_n})$ and hence they are equal in the
group $\widehat{K_0}(X_{\mu_n},\mu_n)$. This implies the equality
in the statement of this lemma.
\end{proof}

\begin{rem}\label{additional1}
The kernel of the proof of Lemma~\ref{407} is the equality
$C(\overline{\xi}.,h^H_{\rm ind})=T_g(\overline{\xi}.)$ which is
in the purely analytic setting. We would like to indicate that it
is possible to prove this equality by using the construction of
the deformation to the normal cone. It is a little complicated but
it is independently interesting. We shall fulfill this work in a
coming paper.
\end{rem}

We are now ready to give a complete proof of our arithmetic
concentration theorem.

\begin{proof}(of Theorem~\ref{403})
Denote by $U$ the complement of $X_{\mu_n}$ in $X$, then $j:
U\hookrightarrow X$ is an $\mu_n$-equivariant open subscheme of
$X$ whose fixed point set is empty. We consider the following
double complex
\begin{displaymath}
\xymatrix{\widetilde{A}(X_{\mu_n})_{\rho} \ar[r]^-{i_*} \ar[d]^{a}
& \widetilde{A}(X_{\mu_n})_{\rho} \ar[r]^-{j^*} \ar[d]^{a} &
\widetilde{A}(U_{\mu_n})_{\rho} \ar[r] \ar[d]^{a} & 0 \\
\widehat{K_0}(X_{\mu_n},\mu_n)_{\rho} \ar[r]^-{i_*} \ar[d]^\pi &
\widehat{K_0}(X,\mu_n)_{\rho} \ar[r]^-{j^*} \ar[d]^\pi &
\widehat{K_0}(U,\mu_n)_{\rho} \ar[r] \ar[d]^\pi & 0 \\
K_0(X_{\mu_n},\mu_n)_{\rho} \ar[r]^-{i_*} \ar[d] &
K_0(X,\mu_n)_{\rho} \ar[r]^-{j^*} \ar[d] & K_0(U,\mu_n)_{\rho}
\ar[r] \ar[d] & 0 \\
0 & 0 & 0 & }
\end{displaymath} whose columns are all exact
according to Remark~\ref{204}. The third row of this complex is
exact by Quillen's localization sequence for higher equivariant
$K$-theory. We next claim that the other two rows are also exact.
Actually, this follows from an easy argument of diagram chasing.
Note that the algebraic concentration theorem implies that
$K_*(U,\mu_n)_{\rho}$ all vanish, then together with the fact that
$U_{\mu_n}$ is the empty set we conclude that the elements of the
third column of this double complex are all equal to $0$. So our
claim is equivalent to say $i_*:
\widehat{K_0}(X_{\mu_n},\mu_n)_{\rho}\rightarrow
\widehat{K_0}(X,\mu_n)_{\rho}$ is surjective. Indeed, for any
element $x\in \widehat{K_0}(X,\mu_n)_{\rho}$ we may find an
element $y\in \widehat{K_0}(X_{\mu_n},\mu_n)_{\rho}$ such that
$i_*\pi(y)=\pi(x)$. This means $x-i_*(y)$ is in the kernel of
$\pi$, so there exists an element $\alpha\in
\widetilde{A}(X_{\mu_n})$ such that $\alpha=x-i_*(y)$ in
$\widehat{K_0}(X,\mu_n)_{\rho}$. Set $\beta=\alpha{\rm
Td}_g(\overline{N})$, we get $i_*(y+\beta)=i_*(y)+\alpha=x$ in
$\widehat{K_0}(X,\mu_n)_{\rho}$. Hence, $i_*$ is surjective. By
constructing its inverse morphism, we conclude that the embedding
morphism $i_*: \widehat{K_0}(X_{\mu_n},\mu_n)_{\rho}\rightarrow
\widehat{K_0}(X,\mu_n)_{\rho}$ is really an isomorphism.
Concerning such inverse morphism, let $\overline{\eta}$ be an
equivariant hermitian vector bundle on $X_{\mu_n}$, then we have
\begin{displaymath}
\lambda_{-1}^{-1}(\overline{N}^\vee)\cdot
i^*i_*([\overline{\eta}])=\lambda_{-1}^{-1}(\overline{N}^\vee)\cdot
i^*(\sum_k(-1)^k[\overline{\xi}_k]+T_g(\overline{\xi.}))=[\overline{\eta}].
\end{displaymath} The last equality follows from Lemma~\ref{407}.
Moreover, let $\alpha$ be an element in
$\widetilde{A}(X_{\mu_n})$, then we have the equalities
\begin{displaymath}
\lambda_{-1}^{-1}(\overline{N}^\vee)\cdot
i^*i_*([\alpha])=\lambda_{-1}^{-1}(\overline{N}^\vee)\cdot
[\alpha{\rm Td}_g^{-1}(\overline{N})]=[{\rm
ch}_g(\lambda_{-1}^{-1}(\overline{N}^\vee))\alpha{\rm
Td}_g^{-1}(\overline{N})]=[\alpha].
\end{displaymath} Therefore,
the inverse morphism of the embedding morphism $i_*$ is of the
form $\lambda_{-1}^{-1}(\overline{N}^\vee)\cdot i^*$. This
completes the proof of the arithmetic concentration theorem.
\end{proof}

\section{Relative fixed point formula of Lefschetz type in Arakelov geometry}
Let $X$, $Y$ be two $\mu_n$-equivariant arithmetic varieties over
some fixed arithmetic ring $D$ and let $f: X\rightarrow Y$ be a
$\mu_n$-equivariant morphism over $D$ which is smooth over the
complex numbers. By definition $X$ admits a $\mu_n$-projective
action over $D$, then the morphism $f$ is automatically
$\mu_n$-projective. Fix a $\mu_n(\C)$-invariant K\"{a}hler metric
on $X(\C)$ so that we get a K\"{a}hler fibration with respect to
the holomorphic submersion $f_\C: X(\C)\rightarrow Y(\C)$. Let
$\overline{E}$ be an $f$-acyclic $\mu_n$-equivariant hermitian
vector bundle on $X$, we know that the direct image $f_*E$ is a
coherent sheaf which is locally free on $Y(\C)$ hence it can be
endowed with a natural equivariant structure and the $L^2$-metric.
Moreover, $(f_*E,f_*h^E)$ always admits a resolution by
equivariant hermitian vector bundles on $Y$. Assume that
${\overline{\xi.}}^{\overline{E}}$ is such a resolution and we
denote by $\widetilde{{\rm
ch}}_g({\overline{\xi.}}^{\overline{E}})$ the secondary Bott-Chern
characteristic class of the following exact sequence
\begin{displaymath}
0\to {\overline{\xi}_m^{\overline{E}}}_\C\to \cdots\to
{\overline{\xi}_1^{\overline{E}}}_\C \to
{\overline{\xi}_0^{\overline{E}}}_\C \to (f_*E_\C,f_*h^E)\to 0.
\end{displaymath} Let $\widehat{K_0}^{\rm ac}(X,\mu_n)$ be the
group generated by $f$-acyclic equivariant vector bundles on $X$
and the elements of $\widetilde{A}(X_{\mu_n})$, with the same
relations as in Definition~\ref{203}. A theorem of Quillen (cf.
\cite{Qu1}, Cor.3 p. 111) for the algebraic analogs of this group
implies that the natural map $\widehat{K_0}^{\rm ac}(X,\mu_n)\to
\widehat{K_0}(X,\mu_n)$ is an isomorphism. So the following
definition does make sense.

\begin{defn}\label{501}
Let notations and assumptions be as above. The push-forward
morphism \begin{displaymath} f_*:\quad
\widehat{K_0}(X,\mu_n)\rightarrow \widehat{K_0}(Y,\mu_n)
\end{displaymath} is defined as follows.

(i). For every $f$-acyclic $\mu_n$-equivariant hermitian vector
bundle $\overline{E}$ on $X$, the direct image of $\overline{E}$
is given by
$f_*\overline{E}=\sum_k(-1)^k\overline{\xi}_k^{\overline{E}}+\widetilde{{\rm
ch}}_g({\overline{\xi.}}^{\overline{E}})-T_g(\omega^X,h^E)$ where
${\overline{\xi.}}^{\overline{E}}$ is a resolution of
$(f_*E,f_*h^E)$ on the base variety $Y$.

(ii). For every element $\alpha\in \widetilde{A}(X_{\mu_n})$,
$f_*\alpha=\int_{X_g/{Y_g}}{\rm Td}_g(Tf,h^{Tf})\alpha\in
\widetilde{A}(Y_{\mu_n})$.
\end{defn}

\begin{thm}\label{502}
The push-forward morphism $f_*$ is a well-defined group
homomorphism.
\end{thm}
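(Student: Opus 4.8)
The plan is to verify that the two defining formulas in Definition~\ref{501} respect all the relations in the definition of $\widehat{K_0}(X,\mu_n)$ and are independent of the choices made. First I would address the choice of resolution ${\overline{\xi.}}^{\overline{E}}$ of $(f_*E,f_*h^E)$ on $Y$. As recalled after Definition~\ref{203}, any two equivariant hermitian resolutions are dominated by a third, so it suffices to compare a resolution with a domination of it; the contribution $\sum_k(-1)^k\overline{\xi}_k^{\overline{E}}+\widetilde{{\rm ch}}_g({\overline{\xi.}}^{\overline{E}})$ is then seen to be unchanged in $\widehat{K_0}(Y,\mu_n)$ by the standard argument (the acyclic piece added by the domination contributes a term cancelled by the corresponding Bott-Chern class, exactly as in the non-equivariant theory of Gillet-Soul\'e and as in \cite{KR1}). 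I would also have to check independence of the choice of $L^2$-metric $f_*h^E$ and of the K\"ahler fibration structure $\omega^X$: here the anomaly formula Theorem~\ref{318} is the key input, since it shows that changing these data alters $T_g(\omega^X,h^E)$ precisely by the equivariant Bott-Chern secondary classes that simultaneously correct the term $\widetilde{{\rm ch}}_g({\overline{\xi.}}^{\overline{E}})$, so the total expression for $f_*\overline{E}$ is well defined.

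Next I would verify compatibility with relation (i) of Definition~\ref{203}: given a short exact sequence $\overline{\varepsilon}: 0\to\overline{E}'\to\overline{E}\to\overline{E}''\to 0$ of $f$-acyclic equivariant hermitian bundles, I must show $f_*\overline{E}'-f_*\overline{E}+f_*\overline{E}''=f_*(a(\widetilde{{\rm ch}}_g(\overline{\varepsilon})))$, i.e. equals $\int_{X_g/Y_g}{\rm Td}_g(Tf,h^{Tf})\,\widetilde{{\rm ch}}_g(\overline{\varepsilon})$ as a class in $\widetilde{A}(Y_{\mu_n})$. Because $f$ is flat and the bundles are $f$-acyclic, the sequence of direct images $0\to f_*E'\to f_*E\to f_*E''\to 0$ is exact, and one can choose compatible resolutions ${\overline{\xi.}}^{\overline{E}'}$, ${\overline{\xi.}}^{\overline{E}}$, ${\overline{\xi.}}^{\overline{E}''}$ fitting into a double complex with exact rows, so that the purely algebraic parts $\sum_k(-1)^k\overline{\xi}_k$ cancel appropriately and the Bott-Chern terms combine via the standard additivity of $\widetilde{{\rm ch}}_g$ under such double complexes. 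The remaining identity is then a relation among the analytic torsion forms $T_g(\omega^X,h^{E'})$, $T_g(\omega^X,h^E)$, $T_g(\omega^X,h^{E''})$ and the Bott-Chern classes: this is exactly an additivity property of the equivariant analytic torsion form for short exact sequences, which follows from the differential equation in Theorem~\ref{317} together with the uniqueness principle of the $M_g$-type (Theorem~\ref{406}) — or can be quoted directly from \cite{Ma1}. Compatibility with relation (ii) of Definition~\ref{203} and with the $\widetilde{A}$-part is routine: $f_*$ on $\widetilde{A}(X_{\mu_n})$ is fibre integration against ${\rm Td}_g(Tf,h^{Tf})$, which is visibly additive and lands in $\widetilde{A}(Y_{\mu_n})$ by Theorem~\ref{317} (the fibre integral of a ${\rm dd}^c$-exact form over a proper submersion is ${\rm dd}^c$-exact).

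Finally I would confirm that the map is a group homomorphism, which given that it is defined on generators compatibly with the defining relations is automatic, and that it is the $\mu_n$-equivariant analog of the algebraic $f_*$ under the forgetful map $\pi$ — i.e. that the diagram with $\widehat{K_0}^{\rm ac}(X,\mu_n)\cong\widehat{K_0}(X,\mu_n)$ (the Quillen isomorphism cited before the definition) and the algebraic push-forward commutes, which is clear since $\pi$ kills all the analytic terms. I expect the main obstacle to be the bookkeeping in the short-exact-sequence check: one must produce the compatible double complex of resolutions on $Y$ \emph{and} match up the three competing secondary objects (the Bott-Chern classes $\widetilde{{\rm ch}}_g$ of the resolution data, the analytic torsion forms $T_g$, and the fibre integral of $\widetilde{{\rm ch}}_g(\overline{\varepsilon})$) into a single identity in $\widetilde{A}(Y_{\mu_n})$; every individual ingredient is available (Theorems~\ref{317}, \ref{318}, \ref{326}, and the results of \cite{Ma1}), but assembling them without sign or normalization errors is the delicate part. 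This is entirely parallel to the non-equivariant construction of the direct image in arithmetic $K_0$-theory and to \cite{KR1}, Section~4, so no genuinely new analytic input is needed.
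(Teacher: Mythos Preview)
Your overall architecture matches the paper's: reduce to a short exact sequence $\overline{\varepsilon}$ of $f$-acyclic bundles, build a compatible double complex of resolutions of the direct images on $Y$, and reduce everything to a single analytic identity relating $T_g(\omega^X,h^{E'})-T_g(\omega^X,h^{E})+T_g(\omega^X,h^{E''})$, $\widetilde{{\rm ch}}_g(f_*\overline{\varepsilon})$, and $\int_{X_g/Y_g}{\rm Td}_g(Tf,h^{Tf})\widetilde{{\rm ch}}_g(\overline{\varepsilon})$. The independence-of-resolution step and the compatibility with relation (ii) are handled as you say.

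The one substantive divergence is how you justify that analytic identity. You invoke Theorem~\ref{406} (the $M_g$-uniqueness for standard complexes) together with Theorem~\ref{317}, or alternatively a direct citation of \cite{Ma1}. Neither quite closes the argument within the paper's framework: Theorem~\ref{406} characterizes secondary classes attached to a standard complex on a \emph{fixed} manifold, not analytic torsion forms attached to a \emph{fibration}, so it does not apply here; and \cite{Ma1} supplies the curvature and anomaly formulae (Theorems~\ref{317}, \ref{318}) but not, as far as the paper records, an additivity-under-exact-sequences statement for $T_g$. The paper's device is different and worth noting: it obtains the needed identity as the degenerate case of Bismut--Ma's immersion formula (Theorem~\ref{332}) in which the closed immersion is the identity map of $f_\C^{-1}(Y_g)$. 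Then $N=0$, the $\widetilde{{\rm Td}}_g$ and $R_g$ contributions drop out, the singular current $T_g(\overline{\xi}.)$ reduces to $-\widetilde{{\rm ch}}_g(\overline{\varepsilon})$, and the formula collapses exactly to the desired equality in $\widetilde{A}(Y_{\mu_n})$. This trick keeps the proof self-contained with the analytic preliminaries already assembled in Section~4; your route would require either importing an extra result from outside or proving a family version of the $M_g$-uniqueness, which is more work than the paper needs.
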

\begin{proof}
Again we have to prove that our definition for $f_*$ is
independent of the choice of resolution and it is compatible with
the two generating relations of arithmetic $K_0$-group. Indeed,
assume that we are given a short exact sequence
\begin{displaymath}
\overline{\varepsilon}:\quad
0\rightarrow\overline{E}'\rightarrow\overline{E}\rightarrow\overline{E}''\rightarrow
0 \end{displaymath} of $f$-acyclic equivariant hermitian vector
bundles on $X$. Then we may construct the following short exact
sequence of resolution \begin{displaymath} \xymatrix{ &
\overline{\varepsilon}_m & \cdots & \overline{\varepsilon}_1 &
\overline{\varepsilon}_0 &
f_*\overline{\varepsilon} &  \\
 & 0 \ar[d] & & 0 \ar[d] & 0 \ar[d] & 0 \ar[d] & \\
0 \ar[r] & \overline{\xi}_m^{\overline{E}'}  \ar[r] \ar[d] &
\cdots \ar[r] & \overline{\xi}_1^{\overline{E}'} \ar[r] \ar[d] &
\overline{\xi}_0^{\overline{E}'} \ar[r] \ar[d] & (f_*E',f_*h^{E'})
\ar[r] \ar[d] & 0 \\
0 \ar[r] & \overline{\xi}_m^{\overline{E}} \ar[r] \ar[d] & \cdots
\ar[r] & \overline{\xi}_1^{\overline{E}} \ar[r] \ar[d] &
\overline{\xi}_0^{\overline{E}} \ar[r] \ar[d] & (f_*E,f_*h^{E})
\ar[r] \ar[d] & 0 \\
0 \ar[r] & \overline{\xi}_m^{\overline{E}''}  \ar[r] \ar[d] &
\cdots \ar[r] & \overline{\xi}_1^{\overline{E}''} \ar[r] \ar[d] &
\overline{\xi}_0^{\overline{E}''} \ar[r] \ar[d] &
(f_*E'',f_*h^{E''})
\ar[r] \ar[d] & 0 \\
 & 0  & & 0  & 0  & 0  & .}
\end{displaymath} According to the generating relations of
$\widehat{K_0}(Y,\mu_n)$, our definition of the push-forward
morphism $f_*$ and the double complex formula of equivariant
secondary Bott-Chern classes, we have an equation
\begin{displaymath}
f_*\overline{E}'-f_*\overline{E}+f_*\overline{E}''=-T_g(\omega^X,h^{E'})+T_g(\omega^X,h^E)-T_g(\omega^X,h^{E''})+\widetilde{{\rm
ch}}_g(f_*\overline{\varepsilon}) \end{displaymath} in
$\widehat{K_0}(Y,\mu_n)$. On the other hand, we apply Bismut-Ma's
immersion formula to the case where the fibration is with respect
to $f_\C: f_\C^{-1}(Y_g)\rightarrow Y_g$ and the closed immersion
is the identity map, then the equality
\begin{displaymath}
T_g(\omega^X,h^{E'})-T_g(\omega^X,h^E)+T_g(\omega^X,h^{E''})-\widetilde{{\rm
ch}}_g(f_*\overline{\varepsilon})=-\int_{X_g/{Y_g}}{\rm
Td}_g(Tf,h^{Tf})\widetilde{{\rm ch}}_g(\overline{\varepsilon})
\end{displaymath} holds in $\widetilde{A}(Y_{\mu_n})$. Combing
these two computations above, we finally get
\begin{displaymath}
f_*\overline{E}'-f_*\overline{E}+f_*\overline{E}''=\int_{X_g/{Y_g}}{\rm
Td}_g(Tf,h^{Tf})\widetilde{{\rm ch}}_g(\overline{\varepsilon}).
\end{displaymath} This final expression means that the push-forward
morphism $f_*$ is compatible with the first generating relation of
arithmetic $K_0$-theory. For the second one, it is rather clear
from definition. It is easily seen from the generating relation of
$\widehat{K_0}(Y,\mu_n)$ that $f_*0$ is equal to $0$. Moreover,
since any two resolutions of $f_*E$ are dominated by a third one,
our arguments above also show that $f_*\overline{E}$ is
independent of the choice of resolution. So we are done.
\end{proof}

\begin{lem}\label{pfp}(Projection formula)
For any elements $y\in \widehat{K_0}(Y,\mu_n)$ and $x\in
\widehat{K_0}(X,\mu_n)$, the equality $f_*(f^*y\cdot x)=y\cdot
f_*x$ holds in $\widehat{K_0}(Y,\mu_n)$.
\end{lem}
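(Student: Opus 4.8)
The plan is to follow the pattern of the proof of Lemma~\ref{pf}. Since $f_*$ and $f^*$ are group homomorphisms and the products on $\widehat{K_0}(X,\mu_n)$ and $\widehat{K_0}(Y,\mu_n)$ are biadditive, it suffices to check $f_*(f^*y\cdot x)=y\cdot f_*x$ when $x$ and $y$ each range over the generators, namely equivariant hermitian vector bundles and classes in $\widetilde{A}(X_{\mu_n})$, resp. $\widetilde{A}(Y_{\mu_n})$; moreover, using the isomorphism $\widehat{K_0}^{\rm ac}(X,\mu_n)\cong\widehat{K_0}(X,\mu_n)$ I may assume that $x$ is an $f$-acyclic bundle whenever it is a bundle. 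This leaves four cases.

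When at least one of $x,y$ is a form class, both sides are represented by smooth forms on $Y_g$ and the verification is formal: one uses that $f_g^*$ commutes with ${\rm ch}_g$ and with ${\rm dd}^c$, the projection formula for integration along the fibres of $f_g\colon X(\C)_g\to Y(\C)_g$, the product rules $\overline{F}\cdot\alpha={\rm ch}_g(\overline{F})\wedge\alpha$ and $\beta\cdot\gamma={\rm dd}^c\beta\wedge\gamma$, and---in the case $y=\beta$ a form and $x=\overline{E}$ a bundle---the differential equations satisfied by $\widetilde{{\rm ch}}_g({\overline{\xi.}}^{\overline{E}})$ (Definition~\ref{203}) and by $T_g(\omega^X,h^E)$ (Theorem~\ref{317}, noting that $R^\cdot f_*E=f_*E$ with its $L^2$-metric since $E$ is $f$-acyclic), together with the commutativity of $\beta\cdot\gamma$ in $\widetilde{A}(Y_{\mu_n})$. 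These three cases are routine and parallel the corresponding ones in Lemma~\ref{pf}.

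The substantial case is $x=\overline{E}$ an $f$-acyclic equivariant hermitian bundle on $X$ and $y=\overline{F}$ an equivariant hermitian bundle on $Y$. Then $f^*F\otimes E$ is again $f$-acyclic and $f_*(f^*F\otimes E)\cong F\otimes f_*E$ by the projection formula for coherent cohomology. The key idea is to compute $f_*(f^*\overline{F}\otimes\overline{E})$ by means of a well-chosen resolution: if ${\overline{\xi.}}^{\overline{E}}$ resolves $(f_*E,f_*h^E)$ on $Y$, then $\overline{F}\otimes{\overline{\xi.}}^{\overline{E}}$, with the tensor-product metrics, is a resolution of $f_*(f^*F\otimes E)$ realizing its $L^2$-metric: indeed $f^*h^F$ is constant along the fibres of $f_\C$, so the fibrewise $L^2$-metric on $H^0(Y_s,(f^*F\otimes E)\mid_{Y_s})=F_s\otimes H^0(Y_s,E\mid_{Y_s})$ is $h^F_s\otimes\langle\cdot,\cdot\rangle_{L^2}$, whence $(f_*(f^*F\otimes E),f_*h^{f^*F\otimes E})$ is isometric to $\overline{F}\otimes(f_*E,f_*h^E)$. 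Since $f_*$ is independent of the chosen resolution (Theorem~\ref{502}), I may use $\overline{F}\otimes{\overline{\xi.}}^{\overline{E}}$. It then remains to see that the three summands of $f_*(f^*\overline{F}\otimes\overline{E})$ reassemble to $\overline{F}\cdot f_*\overline{E}$: the vector-bundle summand $\sum_k(-1)^k(\overline{F}\otimes\overline{\xi}_k^{\overline{E}})$ equals $\overline{F}\cdot\sum_k(-1)^k\overline{\xi}_k^{\overline{E}}$ by definition of the product; for the secondary summand one needs the multiplicativity $\widetilde{{\rm ch}}_g(\overline{F}\otimes{\overline{\xi.}}^{\overline{E}})={\rm ch}_g(\overline{F})\wedge\widetilde{{\rm ch}}_g({\overline{\xi.}}^{\overline{E}})$ of Bott-Chern classes under tensoring with a fixed hermitian bundle; and for the torsion summand one needs $T_g(\omega^X,h^{f^*F\otimes E})={\rm ch}_g(\overline{F},h^F)\wedge T_g(\omega^X,h^E)$. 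Granting these, $f_*(f^*\overline{F}\otimes\overline{E})=\overline{F}\cdot f_*\overline{E}$ in $\widehat{K_0}(Y,\mu_n)$.

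The main obstacle is this last identity, the compatibility of the equivariant analytic torsion form with tensoring by a bundle pulled back from the base. I expect to obtain it directly from the construction in Section~4.1: replacing $\xi$ by $f^*F\otimes\xi$ turns the infinite-dimensional bundle $E$ on $Y(\C)$ into $E\otimes F$, and since $F$ and its metric are pulled back from the base the Bismut superconnection, the operators $N_u$, and the relevant supertraces change only through the base-direction data of $\overline{F}$, producing precisely the overall factor ${\rm ch}_g(\overline{F},h^F)$; alternatively one invokes the analogous multiplicativity of higher analytic torsion forms from \cite{BK} together with its equivariant refinement in \cite{Ma1}. Once this is available, assembling the four cases---using the two generating relations of $\widehat{K_0}(Y,\mu_n)$ as in the proof of Theorem~\ref{502}---completes the proof.
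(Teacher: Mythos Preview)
Your proposal is correct and follows essentially the same approach as the paper: a case-by-case check on generators (with $x$ taken $f$-acyclic), the form cases handled via the fibre-integration projection formula and the differential equations for $T_g$ and $\widetilde{{\rm ch}}_g$, and the bundle-bundle case reduced to the isometry $(f_*(f^*F\otimes E),f_*h)\cong\overline{F}\otimes(f_*E,f_*h^E)$ together with the multiplicativity $T_g(\omega^X,h^{f^*F\otimes E})={\rm ch}_g(\overline{F})\wedge T_g(\omega^X,h^E)$. The paper states this last torsion identity without comment, whereas you rightly flag it as the only nontrivial input; your sketch of how it follows from the construction in Section~4.1 (the Bismut superconnection for $f^*F\otimes E$ factors as $\nabla^F\otimes 1$ plus the superconnection for $E$, yielding the ${\rm ch}_g(\overline{F})$ factor in the supertrace) is the correct justification.
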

\begin{proof}
Assume that $y=\overline{E}$ is an equivariant hermitian vector
bundle and $x=\overline{F}$ is an $f$-acyclic equivariant
hermitian vector bundle, then $f^*y\cdot x=f^*\overline{E}\otimes
\overline{F}$. By projection formula for direct images and the
definition of $L^2$-metric, we know that $(f_*(f^*E\otimes
F),f_*h^{f^*E\otimes F})$ is isometric to $\overline{E}\otimes
(f_*F,f_*h^F)$. Moreover, concerning the analytic torsion form, we
have $T_g(\omega^X,h^{f^*E\otimes F})={\rm
ch}_g(\overline{E})T_g(\omega^X,h^F)$. So the projection formula
$f_*(f^*y\cdot x)=y\cdot f_*x$ holds in this case.

Assume that $y=\overline{E}$ is an equivariant hermitian vector
bundle and $x=\alpha$ is represented by some smooth form. We write
$f_g^*$ and ${f_g}_*$ for the pull-back and push-forward of smooth
forms respectively, then \begin{eqnarray*} f_*(f^*y\cdot
x)&=&f_*(f_g^*{\rm
ch}_g(\overline{E})\alpha)\\
&=&{f_g}_*(f_g^*{\rm
ch}_g(\overline{E})\alpha{\rm Td}_g(\overline{Tf}))={\rm
ch}_g(\overline{E}){f_g}_*(\alpha{\rm
Td}_g(\overline{Tf}))\\
&=&{\rm
ch}_g(\overline{E})\int_{X_g/{Y_g}}\alpha{\rm
Td}_g(\overline{Tf})=y\cdot f_*x.
\end{eqnarray*}
Here we have used the projection formula of smooth forms
$p_*(p^*\alpha_1\wedge\alpha_2)=\alpha_1\wedge p_*\alpha_2$ (cf.
\cite{GHV}, Prop. IX p. 303).

Assume that $y=\beta$ is represented by some smooth form and
$x=\overline{F}$ is an $f$-acyclic hermitian vector bundle, then
\begin{eqnarray*}
f_*(f^*y\cdot x)&=&f_*(f_g^*\beta{\rm
ch}_g(\overline{F}))={f_g}_*(f_g^*\beta{\rm
ch}_g(\overline{F}){\rm
Td}_g(\overline{Tf}))\\
&=&\beta{f_g}_*({\rm
ch}_g(\overline{E}){\rm
Td}_g(\overline{Tf}))=\beta\int_{X_g/{Y_g}}{\rm
ch}_g(\overline{E}){\rm Td}_g(\overline{Tf})\\
&=&\beta{\rm
ch}_g(\overline{f_*{F_\C}})-\beta{\rm dd}^cT_g(\omega^X,h^F)
\end{eqnarray*}
which is exactly $y\cdot f_*x$.

Finally, assume that $y=\beta$ and $x=\alpha$ are both represented
by smooth forms, then \begin{displaymath} f_*(f^*y\cdot
x)=f_*(f_g^*\beta({\rm dd}^c\alpha))={f_g}_*(f_g^*\beta({\rm
dd}^c\alpha){\rm Td}_g(\overline{Tf}))=\beta{\rm
dd}^c{f_g}_*(\alpha{\rm Td}_g(\overline{Tf})) \end{displaymath}
which is also equal to $y\cdot f_*x$.

Since $f_*$ and $f^*$ are both group homomorphisms, we may
conclude the projection formula by linear extension.
\end{proof}

\begin{rem}\label{pfprem}
Lemma~\ref{pfp} implies that $f_*$ is a homomorphism of
$R(\mu_n)$-modules, and hence it induces a push-forward morphism
after taking localization.
\end{rem}

We next translate Bismut-Ma's immersion formula to a
$\widehat{K_0}$-theoretic version. Let notations and assumptions
be as before. Assume that the morphism $f: X\rightarrow Y$ has a
factorization $f=l\circ i$ such that $i: X\rightarrow P$ is a
closed immersion of equivariant arithmetic varieties and $l:
P\rightarrow Y$ is still equivariant and smooth on the complex
numbers. We additionally suppose that the $\mu_n$-action on $Y$ is
trivial. Let \begin{displaymath} \overline{\Xi}:\quad 0\rightarrow
\overline{\xi}_m\rightarrow
\overline{\xi}_{m-1}\rightarrow\cdots\overline{\xi}_0\rightarrow
i_*\overline{\eta}\rightarrow 0 \end{displaymath} be a resolution
by $l$-acyclic equivariant hermitian vector bundles on $P$ of an
$f$-acyclic equivariant hermitian vector bundle $\overline{\eta}$
on $X$. We suppose that the K\"{a}hler metric on $X$ is induced by
a $\mu_n(\C)$-invariant K\"{a}hler metric on $P$ and the normal
bundle $N$ of $i(X)$ in $P$ carries the quotient metric. As usual,
suppose that the metrics on $\xi.$ satisfy Bismut assumption (A).

\begin{thm}\label{503}
Let notations and assumptions be as above. Then the equality
\begin{eqnarray*}
f_*(\overline{\eta})-\sum_{j=0}^m(-1)^jl_*(\overline{\xi}_j)&=&\int_{X_g/{Y}}{\rm
ch}_g(\eta)R_g(N){\rm
Td}_g(Tf)+\int_{P_g/{Y}}T_g(\overline{\xi}.){\rm
Td}_g(Tl,h^{Tl})\\
&&+\int_{X_g/{Y}}{\rm ch}_g(\overline{\eta})\widetilde{{\rm
Td}}_g(\overline{Tf},\overline{Tl}\mid_X){\rm
Td}_g^{-1}(\overline{N})
\end{eqnarray*}
holds in $\widehat{K_0}(Y,\mu_n)$.
\end{thm}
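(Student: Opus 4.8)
The plan is to read off both sides of the identity from the definition of the push-forward morphism (Definition~\ref{501}) and then to invoke Bismut--Ma's immersion formula (Theorem~\ref{332}), which is the only analytic ingredient; what is left afterwards is a formal manipulation of the equivariant $R$-genus contribution based on the differential equation of Theorem~\ref{325}.

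First I would evaluate the two push-forwards. Since each $\overline{\xi}_j$ is $l$-acyclic, $l_*\xi_j$ is already an equivariant hermitian vector bundle on $Y$ (with its $L^2$-metric $l_*h^{\xi_j}$), so one may use the trivial resolution, and Definition~\ref{501} gives $l_*(\overline{\xi}_j)=(l_*\xi_j,l_*h^{\xi_j})-T_g(\omega^P,h^{\xi_j})$ in $\widehat{K_0}(Y,\mu_n)$. On the other hand, as $\eta$ is $f$-acyclic and the $\xi_j$ are $l$-acyclic, pushing $\overline{\Xi}$ forward along $l$ yields the exact sequence
\begin{displaymath}
\Xi:\quad 0\to l_*\xi_m\to\cdots\to l_*\xi_0\to f_*\eta\to 0
\end{displaymath}
of Section~3.3, a resolution of $(f_*\eta,f_*h^\eta)$ on $Y$ by equivariant hermitian vector bundles; using it to compute $f_*(\overline{\eta})$ (which is allowed by Theorem~\ref{502}) one gets
\begin{displaymath}
f_*(\overline{\eta})=\sum_{k=0}^m(-1)^k(l_*\xi_k,l_*h^{\xi_k})+\widetilde{{\rm ch}}_g(\Xi,h^{L^2})-T_g(\omega^X,h^\eta),
\end{displaymath}
where $\widetilde{{\rm ch}}_g(\Xi,h^{L^2})$ is precisely the secondary class appearing in Theorem~\ref{332}.

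Subtracting, the bundle terms cancel, and in $\widehat{K_0}(Y,\mu_n)$ one is left with
\begin{displaymath}
f_*(\overline{\eta})-\sum_{j=0}^m(-1)^jl_*(\overline{\xi}_j)=\widetilde{{\rm ch}}_g(\Xi,h^{L^2})-T_g(\omega^X,h^\eta)+\sum_{j=0}^m(-1)^jT_g(\omega^P,h^{\xi_j}).
\end{displaymath}
Because the K\"ahler metric on $X$ is induced from the chosen one on $P$, Bismut--Ma's immersion formula (Theorem~\ref{332}, with its $(Y,X,S,f,l,i)$ renamed to our $(X,P,Y,f,l,i)$) applies verbatim and identifies this right-hand side with the right-hand side of Theorem~\ref{332}. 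Its first two terms, $\int_{P_g/Y}{\rm Td}_g(Tl,h^{Tl})T_g(\overline{\xi}.)$ and $\int_{X_g/Y}\widetilde{{\rm Td}}_g(\overline{Tf},\overline{Tl}\mid_X){\rm Td}_g^{-1}(\overline N){\rm ch}_g(\overline{\eta})$, are already the second and third terms of the assertion, so it remains to verify
\begin{displaymath}
\int_{P_g/Y}{\rm Td}_g(Tl)R_g(Tl)\sum_j(-1)^j{\rm ch}_g(\xi_j)-\int_{X_g/Y}{\rm Td}_g(Tf)R_g(Tf){\rm ch}_g(\eta)=\int_{X_g/Y}{\rm ch}_g(\eta)R_g(N){\rm Td}_g(Tf)
\end{displaymath}
in $\widetilde{A}(Y_{\mu_n})$.

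For this last identity I would use Theorem~\ref{325} to write $\sum_j(-1)^j{\rm ch}_g(\xi_j)={i_g}_*({\rm ch}_g(\overline{\eta}){\rm Td}_g^{-1}(\overline N))-{\rm dd}^cT_g(\overline{\xi}.)$; since ${\rm Td}_g(Tl)R_g(Tl)$ is a closed form and fibre integration along $l_g$ sends $\partial$- and $\overline\partial$-exact currents to exact ones, the ${\rm dd}^c$-term drops out in $\widetilde{A}(Y_{\mu_n})$, and the first integral becomes $\int_{P_g/Y}{\rm Td}_g(Tl)R_g(Tl){i_g}_*({\rm ch}_g(\eta){\rm Td}_g^{-1}(N))$. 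The projection formula for $f_{g*}=l_{g*}\circ{i_g}_*$ turns this into $\int_{X_g/Y}i_g^*({\rm Td}_g(Tl)R_g(Tl)){\rm ch}_g(\eta){\rm Td}_g^{-1}(N)$; and from the exact sequence $0\to\overline{Tf}\to\overline{Tl}\mid_X\to\overline N\to 0$ one has $i_g^*{\rm Td}_g(Tl)={\rm Td}_g(Tf){\rm Td}_g(N)$ up to a ${\rm dd}^c$-exact term, while additivity of the equivariant $R$-genus (Definition~\ref{331}) gives $i_g^*R_g(Tl)=R_g(Tf)+R_g(N)$. Substituting, the factor ${\rm Td}_g^{-1}(N)$ cancels ${\rm Td}_g(N)$, the $R_g(Tf)$-contribution cancels the second integral, and only $\int_{X_g/Y}{\rm ch}_g(\eta)R_g(N){\rm Td}_g(Tf)$ survives, which finishes the proof. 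The conceptual content is carried entirely by Theorem~\ref{332}; the only step needing real care is this last paragraph's traffic between forms and currents — one must check that replacing ${\rm ch}_g$ of the resolution by the $\delta$-current ${i_g}_*({\rm ch}_g(\overline{\eta}){\rm Td}_g^{-1}(\overline N))$ inside the fibre integrals does not change the class in $\widetilde{A}(Y_{\mu_n})$, and that the secondary Todd discrepancy from $0\to\overline{Tf}\to\overline{Tl}\mid_X\to\overline N\to 0$ is ${\rm dd}^c$-exact and so harmless (in particular it is not the $\widetilde{{\rm Td}}_g$-term, which carries no $R$-genus factor).
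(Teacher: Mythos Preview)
Your proof is correct and follows essentially the same route as the paper's: both unwind the push-forwards via Definition~\ref{501}, identify the resulting expression with the left-hand side of Bismut--Ma's immersion formula (Theorem~\ref{332}), and then simplify the two $R$-genus integrals on the right using the exact sequence $0\to\overline{Tf}\to\overline{Tl}\mid_X\to\overline{N}\to 0$ and the additivity of $R_g$. The only cosmetic difference is that the paper packages the replacement of $\sum_j(-1)^j{\rm ch}_g(\xi_j)$ by ${i_g}_*({\rm Td}_g^{-1}(N){\rm ch}_g(\eta))$ as an appeal to the Atiyah--Segal--Singer type identity ${i_g}_*({\rm Td}_g^{-1}(N){\rm ch}_g(x))={\rm ch}_g(i_*x)$ from \cite{KR1}, Theorem~6.16, whereas you extract the same statement directly from the differential equation of Theorem~\ref{325}.
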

\begin{proof}
The verification follows rather directly from the generating
relations of arithmetic $K_0$-theory. In fact
\begin{eqnarray*}
f_*(\overline{\eta})-\sum_{j=0}^m(-1)^jl_*(\overline{\xi}_j)&=&
(f_*\eta,f_*h^\eta)-T_g(\omega^X,h^{\eta})-\sum_{j=0}^m(-1)^j((l_*\xi_j,l_*h^{\xi_j})-T_g(\omega^P,h^{\xi_j}))\\
&=&\widetilde{{\rm
ch}}_g(l_*\overline{\Xi}_\C)-T_g(\omega^X,h^{\eta})+\sum_{j=0}^m(-1)^jT_g(\omega^P,h^{\xi_j}).
\end{eqnarray*}
And the right-hand side of the last equality is exactly the
left-hand side of Bismut-Ma's immersion formula. One just need to
note that to simplify the right hand-side of Bismut-Ma's immersion
formula, we have used an Atiyah-Segal-Singer type formula for
immersion \begin{displaymath} {i_g}_*({\rm Td}_g^{-1}(N){\rm
ch}_g(x))={\rm ch}_g(i_*(x)). \end{displaymath} This formula is
the content of \cite{KR1}, Theorem 6.16.
\end{proof}

\begin{rem}\label{additional2}
The $\widehat{K_0}$-theoretic version of Bismut-Ma's immersion
formula even holds without acyclicity conditions on the bundles
$\eta$ and $\xi_j$. One can carry out the same principle of the
proof of \cite{Roe}, Theorem 6.7 to show this fact because every
equivariant arithmetic variety is supposed to admit a
$\mu_n$-projective action.
\end{rem}

We now turn to the description of the relative fixed point formula
of Lefschetz type and its proof. Let $f: X\rightarrow Y$ be an
equivariant morphism of $\mu_n$-equivariant arithmetic varieties,
which is flat and smooth over the complex numbers. We additionally
suppose that the fibre product $f^{-1}(Y_{\mu_n})$ is regular. We
shall naturally endow $X_{\mu_n}(\C)$ and $f^{-1}(Y_{\mu_n})(\C)$
with the K\"{a}hler metric induced by the K\"{a}hler metric of
$X(\C)$.

We have the following Cartesian square
\begin{displaymath}
\xymatrix{f^{-1}(Y_{\mu_n}) \ar[r] \ar[d] & X \ar[d] \\
Y_{\mu_n} \ar[r] & Y} \end{displaymath} whose rows are both closed
immersions of $\mu_n$-equivariant arithmetic varieties and whose
columns are both flat. Then the normal bundle
$N_{X/{f^{-1}(Y_{\mu_n})}}$ is isomorphic to the pull-back of
normal bundle $f^*N_{Y/{Y_{\mu_n}}}$. Therefore by restricting to
$X_{\mu_n}$ we get a short exact sequence
\begin{displaymath}
\overline{\mathcal{N}}:\quad 0\rightarrow
\overline{N}_{f^{-1}(Y_{\mu_n})/X_{\mu_n}}\rightarrow
\overline{N}_{X/{X_{\mu_n}}}\rightarrow
f^*\overline{N}_{Y/{Y_{\mu_n}}}\rightarrow 0 \end{displaymath} of
equivariant hermitian vector bundles. Here all metrics on these
normal bundles are the quotient metrics. Define
\begin{eqnarray*}
M(f):=\big(\lambda_{-1}^{-1}(\overline{N}_{X/{X_{\mu_n}}}^\vee)\lambda_{-1}(f^*\overline{N}_{Y/{Y_{\mu_n}}}^\vee)+\widetilde{{\rm
Td}}_g(\overline{\mathcal{N}}){\rm
Td}_g^{-1}(f^*\overline{N}_{Y/{Y_{\mu_n}}})\big)\\
\cdot\big(1-R_g(N_{X/{X_{\mu_n}}})+R_g(f^*N_{Y/{Y_{\mu_n}}})\big).
\end{eqnarray*}
Note that $f_{\mu_n}$ is still smooth over the complex numbers
because ${f_{\mu_n}}_\C=f_g$ is still a submersion. Then we can
define the push-forward morphism ${f_{\mu_n}}_*$.

\begin{thm}\label{504}(Relative fixed point formula)
Let notations and assumptions be as above. Then the following
diagram \begin{displaymath} \xymatrix{\widehat{K_0}(X,\mu_n)
\ar[rr]^-{M(f)\cdot\tau} \ar[d]^{f_*} &&
\widehat{K_0}(X_{\mu_n},\mu_n)_{\rho}
\ar[d]^{{f_{\mu_n}}_*} \\
\widehat{K_0}(Y,\mu_n) \ar[rr]^-{\tau} &&
\widehat{K_0}(Y_{\mu_n},\mu_n)_{\rho}} \end{displaymath} commutes,
where $\tau$ stands for the restriction map.
\end{thm}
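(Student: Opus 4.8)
The plan is to factor the statement through the arithmetic concentration theorem (Theorem~\ref{403}) applied to the immersion $i: Y_{\mu_n}\hookrightarrow Y$, and to use the $\widehat{K_0}$-theoretic version of Bismut-Ma's immersion formula (Theorem~\ref{503}) to control the analytic torsion contributions that appear when one compares push-forward along $f$ with push-forward along $f_{\mu_n}$. Concretely, let $i: Y_{\mu_n}\hookrightarrow Y$ and $i': f^{-1}(Y_{\mu_n})\hookrightarrow X$ be the two closed immersions in the Cartesian square, with $f': f^{-1}(Y_{\mu_n})\to Y_{\mu_n}$ the base-changed morphism. Since $i$ is a regular immersion and $i_*: \widehat{K_0}(Y_{\mu_n},\mu_n)_\rho\to\widehat{K_0}(Y,\mu_n)_\rho$ is an isomorphism with inverse $\lambda_{-1}^{-1}(\overline{N}_{Y/Y_{\mu_n}}^\vee)\cdot i^*$, it suffices to prove the identity after composing with $i_*$, i.e.\ to show
\begin{displaymath}
i_*\,{f_{\mu_n}}_*\big(M(f)\cdot\tau(x)\big)=i_*\,\tau(f_*x)
\end{displaymath}
in $\widehat{K_0}(Y,\mu_n)_\rho$ for every $x\in\widehat{K_0}(X,\mu_n)$.

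First I would handle the right-hand side: by the concentration theorem applied on $Y$, $i_*\tau(f_*x)=i_*\big(\lambda_{-1}^{-1}(\overline{N}_{Y/Y_{\mu_n}}^\vee)\cdot i^* (f_* x)\big)$, but more usefully, since $i_*$ is an isomorphism, $i_*\tau$ is simply the identity composed with $i_*i^*$ up to the $\lambda_{-1}$ factor; the cleaner route is to observe that the outer square
\begin{displaymath}
\xymatrix{
\widehat{K_0}(X,\mu_n) \ar[r]^-{f_*} \ar[d] & \widehat{K_0}(Y,\mu_n) \ar[d]^{i_*^{-1}\circ\iota} \\
\widehat{K_0}(f^{-1}(Y_{\mu_n}),\mu_n)_\rho \ar[r]^-{f'_*} & \widehat{K_0}(Y_{\mu_n},\mu_n)_\rho}
\end{displaymath}
commutes once the vertical arrows are the appropriate concentration isomorphisms — this is the relative concentration statement for the flat base change, and it follows from functoriality of $i_*$ together with the projection formulae (Lemma~\ref{pf}, Lemma~\ref{pfp}) and the fact that both $f$ and $f'$ are pushed forward using analytic torsion forms attached to the \emph{same} Kähler fibration restricted over $Y_{\mu_n}$ versus over $Y$. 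The discrepancy between these two torsion forms is precisely what Bismut-Ma's immersion formula measures, so Theorem~\ref{503}, applied to the factorization of $f$ (or of $f'$) through the relevant immersion, converts this discrepancy into the characteristic-class correction terms.

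Next I would compute $i^*i_* = \lambda_{-1}(\overline{N}_{Y/Y_{\mu_n}}^\vee)\cdot(\,\cdot\,)$ wherever it appears, and similarly $i'^*i'_* = \lambda_{-1}(\overline{N}_{X/f^{-1}(Y_{\mu_n})}^\vee)\cdot(\,\cdot\,) = \tau(f^*\lambda_{-1}(\overline{N}_{Y/Y_{\mu_n}}^\vee))\cdot(\,\cdot\,)$, using that $N_{X/f^{-1}(Y_{\mu_n})}\cong f^*N_{Y/Y_{\mu_n}}$. Restricting everything to $X_{\mu_n}$ via the exact sequence $\overline{\mathcal N}: 0\to \overline{N}_{f^{-1}(Y_{\mu_n})/X_{\mu_n}}\to\overline{N}_{X/X_{\mu_n}}\to f^*\overline{N}_{Y/Y_{\mu_n}}\to 0$, the secondary class $\widetilde{\mathrm{Td}}_g(\overline{\mathcal N})$ enters exactly as in the definition of $M(f)$. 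The point of the definition of $M(f)$ — with its two pieces $\lambda_{-1}^{-1}(\overline{N}_{X/X_{\mu_n}}^\vee)\lambda_{-1}(f^*\overline{N}_{Y/Y_{\mu_n}}^\vee)$ plus the $\widetilde{\mathrm{Td}}_g$-correction, times the $R_g$-genus factor — is to make the map $M(f)\cdot\tau$ intertwine exactly the concentration isomorphism on $X$ (with normal bundle $N_{X/X_{\mu_n}}$) with the one implicitly used on $Y$ (with normal bundle $N_{Y/Y_{\mu_n}}$, pulled back), while the $R_g$-terms absorb the $R$-genus contributions that Theorem~\ref{503} dumps into the comparison of the two families of analytic torsion forms. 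So I would expand both sides on generators (an $f$-acyclic equivariant hermitian bundle $\overline\eta$ on $X$, and a smooth form class $\alpha$), apply Theorem~\ref{503} to each, and match terms: the $\int T_g(\overline\xi.)\,\mathrm{Td}_g$ terms cancel against the torsion-form definition of $f'_*$, the $\widetilde{\mathrm{Td}}_g$-term matches the $\widetilde{\mathrm{Td}}_g$-piece of $M(f)$, and the $R_g$-terms match the $R_g$-factor of $M(f)$.

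The main obstacle will be the bookkeeping in the case of a form class $\alpha\in\widetilde A(X_{\mu_n})$ and, more seriously, verifying that the torsion-form terms really do cancel: this requires knowing that the $L^2$-metric on $f_*E$ and the $L^2$-metric on $(f')_*(E|_{f^{-1}(Y_{\mu_n})})$ are related by restriction in the way the anomaly formula (Theorem~\ref{318}) predicts, and that the Kähler fibration on $f^{-1}(Y_{\mu_n})(\C)\to Y_{\mu_n}(\C)$ obtained by restricting the chosen one on $X(\C)\to Y(\C)$ is compatible with applying Bismut-Ma's formula to the immersion $i'$ composed with $f'$. The regularity hypothesis on $f^{-1}(Y_{\mu_n})$ is exactly what guarantees that $\lambda_{-1}$ of its normal bundle is the right kind of invertible element and that $\widehat{K_0}$ of it behaves like $K_0$ of a regular scheme, so that the concentration theorem is available in the relative setting. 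Once these compatibilities are in place, the identity reduces to the formal algebraic identity
\begin{displaymath}
\lambda_{-1}^{-1}(\overline{N}_{X/X_{\mu_n}}^\vee)\cdot i'^* = M_{\mathrm{alg}}(f)\cdot \tau\circ\big(\text{pull-back of }\lambda_{-1}^{-1}(\overline{N}_{Y/Y_{\mu_n}}^\vee)\cdot i^*\big),
\end{displaymath}
which is checked by the exact sequence $\overline{\mathcal N}$ and the multiplicativity of $\lambda_{-1}$ up to the secondary class $\widetilde{\mathrm{Td}}_g$, exactly as encoded in $M(f)$.
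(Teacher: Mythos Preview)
Your proposal reverses the key step: you apply the concentration theorem to the \emph{target} immersion $i:Y_{\mu_n}\hookrightarrow Y$, whereas the paper applies it to the \emph{source} immersion $i:X_{\mu_n}\hookrightarrow X$. This matters because concentration on $Y$ gives you an inverse $i_*^{-1}=\lambda_{-1}^{-1}(\overline N_{Y/Y_{\mu_n}}^\vee)\cdot i^*$ acting on the output of $f_*$, but it does not let you rewrite $f_*$ itself in terms of $f_{\mu_n*}$. The paper's route is cleaner: first use flat base change to get the strictly commutative square
\[
\xymatrix{
\widehat{K_0}(X,\mu_n)\ar[r]^-{\tau}\ar[d]_{f_*} & \widehat{K_0}(f^{-1}(Y_{\mu_n}),\mu_n)\ar[d]^{f'_*}\\
\widehat{K_0}(Y,\mu_n)\ar[r]^-{\tau} & \widehat{K_0}(Y_{\mu_n},\mu_n)
}
\]
(no analytic correction here, since the equivariant torsion forms for $f$ and $f'$ already live on $Y_g$ and coincide), then use the exact sequence $\overline{\mathcal N}$ and additivity of $R_g$ to see that $M(f)=M(f')$ once the $f^*\overline N_{Y/Y_{\mu_n}}$ contributions are absorbed. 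This reduces everything to the case of trivial action on $Y$. In that case one writes $f_*(x)=f_*i_*\bigl(\lambda_{-1}^{-1}(\overline N_{X/X_{\mu_n}}^\vee)\cdot\tau(x)\bigr)$ via Theorem~\ref{403} on the \emph{source}, and then Theorem~\ref{503} applied to the factorization $f_{\mu_n}=f\circ i$ yields $f_{\mu_n*}(y)-f_*i_*(y)=f_{\mu_n*}\bigl(y\cdot R_g(N_{X/X_{\mu_n}})\bigr)$, which is exactly the $(1-R_g)$ factor in $M(f)$.

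Your proposed square with $i_*^{-1}\circ\iota$ on the right and an unspecified ``concentration isomorphism'' on the left is problematic: Theorem~\ref{403} is only stated and proved for the fixed-point immersion, and $f^{-1}(Y_{\mu_n})$ is not the fixed-point subscheme of $X$ (though it contains $X_{\mu_n}$). Even if one extended $i'_*$ and the concentration argument to that immersion, the commutativity of your square is essentially the statement to be proved rather than an input. Likewise, your invocation of Theorem~\ref{503} ``to the factorization of $f$ (or of $f'$) through the relevant immersion'' is ambiguous; the factorization that actually produces the $R_g$-term is $f_{\mu_n}=f\circ i$ with $i:X_{\mu_n}\hookrightarrow X$, which only makes sense once the base has trivial action (note Theorem~\ref{503} is stated under that hypothesis). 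Reorganize along the paper's lines: base-change first to kill the action on $Y$, then concentrate on $X$, then apply Bismut--Ma once.
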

\begin{proof}
Since $f$ is flat, the Cartesian square introduced before this
theorem induces a commutative diagram in arithmetic $K_0$-theory
\begin{displaymath}
\xymatrix{\widehat{K_0}(X,\mu_n) \ar[r]^-{\tau} \ar[d]^{f_*} &
\widehat{K_0}(f^{-1}(Y_{\mu_n}),\mu_n)
\ar[d]^{f_*} \\
\widehat{K_0}(Y,\mu_n) \ar[r]^-{\tau} &
\widehat{K_0}(Y_{\mu_n},\mu_n).} \end{displaymath} From the exact
sequence \begin{displaymath} \overline{\mathcal{N}}:\quad
0\rightarrow \overline{N}_{f^{-1}(Y_{\mu_n})/X_{\mu_n}}\rightarrow
\overline{N}_{X/{X_{\mu_n}}}\rightarrow
f^*\overline{N}_{Y/{Y_{\mu_n}}}\rightarrow 0 \end{displaymath} we
know that
\begin{displaymath}
\lambda_{-1}^{-1}(\overline{N}_{f^{-1}(Y_{\mu_n})/X_{\mu_n}}^\vee)\lambda_{-1}^{-1}(f^*\overline{N}_{Y/{Y_{\mu_n}}}^\vee)-
\lambda_{-1}^{-1}(\overline{N}_{X/{X_{\mu_n}}}^\vee)=\widetilde{{\rm
Td}}_g(\overline{\mathcal{N}}) \end{displaymath} according to
\cite{KR1}, Lemma 7.1 and that
\begin{displaymath}
R_g(N_{f^{-1}(Y_{\mu_n})/X_{\mu_n}})+R_g(f^*N_{Y/{Y_{\mu_n}}})=R_g(N_{X/{X_{\mu_n}}})
\end{displaymath} since the equivariant $R$-genus is additive.
Therefore, we may reduce our proof to the case where the
$\mu_n$-action on the base variety $Y$ is trivial. In this case,
denote by $i$ the canonical closed immersion $X_{\mu_n}\rightarrow
X$. Then for any element $x\in \widehat{K_0}(X,\mu_n)_{\rho}$, we
have
\begin{displaymath}
f_*(x)=f_*i_*i_*^{-1}(x)=f_*i_*(\lambda_{-1}^{-1}(\overline{N}_{X/{X_{\mu_n}}}^\vee)\cdot
\tau(x)). \end{displaymath} Consider the factorization
$f_{\mu_n}=f\circ i$, we have to compute the difference
${f_{\mu_n}}_*-f_*i_*$ in arithmetic $K_0$-theory. Indeed, this
difference can be measured by Bismut-Ma's immersion formula. By
applying Theorem~\ref{503} and Remark~\ref{additional2} to the
closed immersion $i$, for any equivariant hermitian vector bundle
$\overline{\eta}$ on $X_{\mu_n}$ we have
\begin{displaymath}
{f_{\mu_n}}_*(\overline{\eta})-f_*i_*(\overline{\eta})=\int_{X_g/Y}{\rm
ch}_g(\eta)R_g(N_{X/{X_{\mu_n}}}){\rm
Td}_g(Tf_{\mu_n})={f_{\mu_n}}_*({\rm
ch}_g(\eta)R_g(N_{X/{X_{\mu_n}}})). \end{displaymath} The first
equality holds because the exact sequence
\begin{displaymath}
0\rightarrow \overline{Tf_g}\rightarrow
\overline{Tf}\mid_{X_g}\rightarrow
\overline{N}_{X/{X_g}}\rightarrow 0 \end{displaymath} is
orthogonally split on $X_g$ so that $\widetilde{{\rm
Td}}_g(\overline{Tf_g},\overline{Tf}\mid_{X_g})=0$. The second
equality follows from \cite{KR1}, Lemma 7.3 and the fact that
${\rm ch}_g(\eta)R_g(N_{X/{X_{\mu_n}}})$ is ${\rm dd}^c$-closed.
On the other hand, let $\alpha$ be an element in
$\widetilde{A}(X_{\mu_n})$, we have
\begin{eqnarray*}
{f_{\mu_n}}_*(\alpha)-f_*i_*(\alpha)&=&\int_{X_g/Y}\alpha{\rm
Td}_g(Tf_{\mu_n},h^{Tf_{\mu_n}})-\int_{X_g/Y}\alpha{\rm
Td}_g^{-1}(\overline{N}_{X/{X_{\mu_n}}}){\rm
Td}_g(Tf,h^{Tf})\\
&=&\int_{X_g/Y}\alpha{\rm
Td}_g^{-1}(\overline{N}_{X/{X_{\mu_n}}}){\rm dd}^c\widetilde{{\rm
Td}}_g(\overline{Tf_g},\overline{Tf}\mid_{X_g})=0.
\end{eqnarray*}
Combing these two computations, by the ring structure of
$\widehat{K_0}(\cdot)$, we know that the equality
\begin{displaymath}
{f_{\mu_n}}_*(y)-f_*i_*(y)={f_{\mu_n}}_*(y\cdot
R_g(N_{X/{X_{\mu_n}}})) \end{displaymath} holds for any element
$y\in \widehat{K_0}(X_{\mu_n},\mu_n)_\rho$ since both two sides
are additive. Now continue our computation for $f_*(x)$, we obtain
that
\begin{eqnarray*}
f_*(x)&=&{f_{\mu_n}}_*(\lambda_{-1}^{-1}(\overline{N}_{X/{X_{\mu_n}}}^\vee)\cdot
\tau(x))-{f_{\mu_n}}_*(\lambda_{-1}^{-1}(\overline{N}_{X/{X_{\mu_n}}}^\vee)\cdot
\tau(x)\cdot R_g(N_{X/{X_{\mu_n}}}))\\
&=&{f_{\mu_n}}_*(\lambda_{-1}^{-1}(\overline{N}_{X/{X_{\mu_n}}}^\vee)\cdot(1-R_g(N_{X/{X_{\mu_n}}}))\cdot\tau(x)).
\end{eqnarray*}
The last thing should be indicated is that the following square
\begin{displaymath}
\xymatrix{\widehat{K_0}(X,\mu_n) \ar[r]^-{\iota} \ar[d]^{f_*}
& \widehat{K_0}(X,\mu_n)_{\rho} \ar[d]^{f_*} \\
\widehat{K_0}(Y,\mu_n) \ar[r]^-{\iota} &
\widehat{K_0}(Y,\mu_n)_{\rho}}
\end{displaymath} is naturally commutative. So the relative fixed
point formula holds in the case where the $\mu_n$-action on the
base variety $Y$ is trivial. By the observation given at the
beginning of this proof, it is enough to conclude the statement in
our theorem.
\end{proof}

\begin{rem}\label{505}
(i). The condition $f^{-1}(Y_{\mu_n})$ is regular can be satisfied
if the $\mu_n$-action on the base variety is trivial or the
morphism $f$ is not only smooth over the complex numbers but also
smooth everywhere. This is already enough for the applications in
practice. For instance, our main result implies various formulae
stated as conjectural in \cite{MR2}, in particular Proposition 2.3
in that article.

(ii). The condition of flatness on the morphism $f$ is only used
in the reduction of general case to the case where the
$\mu_n$-action on the base variety is trivial. If the
$\mu_n$-action on $Y$ is trivial, one can certainly remove the
condition of flatness.
\end{rem}

\hspace{5cm} \hrulefill\hspace{5.5cm}

D\'{e}partement de Math\'{e}matiques, B\^{a}timent 425,
Universit\'{e} Paris-Sud, 91405 Orsay cedex, France

E-mail: shun.tang@math.u-psud.fr

\end{document}